\newcommand\mypagesizel{
\textwidth= 6.5in
\textheight=9in
\voffset-.55in
\hoffset -0.75in
\marginparwidth=56pt
}
\newcommand{\p}[0]{{\mathbb P}}
\newcommand{\Z}{\textup{Z}}
\renewcommand{\phi}{\varphi}
\newcommand{\D}{\Delta}
\newcommand{\sA}{\mathscr{A}}
\newcommand{\sB}{\mathscr{B}}
\newcommand{\sC}{\mathscr{C}}
\newcommand{\sE}{\mathscr{E}}
\newcommand{\sF}{\mathscr{F}}
\newcommand{\sG}{\mathscr{G}}
\newcommand{\sH}{\mathscr{H}}
\newcommand{\sJ}{\mathscr{J}}
\newcommand{\sM}{\mathscr{M}}
\newcommand{\sO}{\mathscr{O}}
\newcommand{\sR}{\mathscr{R}}
\newtheorem{thm}{Theorem}[section]
\newtheorem{lemma}[thm]{Lemma}
\newtheorem{prop}[thm]{Proposition}
\newtheorem*{thm*}{Theorem}
\theoremstyle{definition}
\newtheorem{defn}[thm]{Definition}
\newtheorem{const}[thm]{Construction}
\newtheorem{defn-thm}[thm]{Definition-Theorem} 
\newtheorem{defn-lemma}[thm]{Definition-Lemma}
\theoremstyle{remark}
\newtheorem{rem}[thm]{Remark}
\newtheorem*{not-and-def}{Notation and definitions}
\newtheorem{exmp}[thm]{Example}
\numberwithin{equation}{section}
\begin{document}

\title[]{Singular rationally connected threefolds with non-zero pluri-forms}

\author{Wenhao OU} 

\address{Wenhao OU: Institut Fourier, UMR 5582 du
  CNRS, Universit\'e Grenoble 1, BP 74, 38402 Saint Martin
  d'H\`eres, France} 

\email{}

\subjclass[2010]{}

\begin{abstract}
This paper is concerned with singular projective rationally connected threefolds $X$ which carry non-zero pluri-forms, \textit{i.e.} $H^0(X,(\Omega_X^1)^{[\otimes m]}) \neq \{0\}$ for some $m > 0$, where $(\Omega_X^1)^{[\otimes m]}$
is the reflexive hull of $(\Omega_X^1)^{\otimes m}$. If $X$ has $\mathbb{Q}$-factorial terminal singularities, then we show that there is a fibration $p$ from $X$ to $\mathbb{P}^1$. Moreover, there is a natural isomorphism $H^0(X, (\Omega_X^1)^{[\otimes m]}) \cong H^0(\p^1, \sO_{\p^1}(-2m+\sum_{z\in \p^1}[\frac{(m(p,z)-1)m}{m(p,z)}]))$ for all $m>0$, where $m(p,z)$ is the smallest positive coefficient in the divisor $p^*z$.
\end{abstract}

\maketitle

\tableofcontents

\section{Introduction}
\label{Introduction}

Recall that a complex projective variety $X$ is said to be rationally connected if for any two general points in $X$, there
exists a rational curve passing through them (see \cite[Def. IV.3.2 and Prop. IV.3.6]{Kol96}). It is known that if $X$ is a smooth rationally connected variety, then $X$ does not carry any non-zero pluri-form, \textit{i.e.} $H^0(X,$  $(\Omega_X^1)^{\otimes m})=\{0\}$ for $m>0$ (see \cite[Cor. IV.3.8]{Kol96}). It is a conjecture that the converse is also true (see \cite[Conj. IV.3.8.1]{Kol96}). For singular varieties, there are also some analogue results. The following theorems can be found in \cite{GKP12} and \cite{GKKP11}.

\begin{thm}[{\cite[Thm. 3.3]{GKP12}}]
\label{can-fac-thma}
If $X$ is a rationally connected variety with factorial canonical singularities, then $H^0(X,(\Omega_X^1)^{[\otimes m]})=\{0\}$ for $m>0$, where $(\Omega_X^1)^{[\otimes m]}$ is the reflexive hull of $(\Omega_X^1)^{\otimes m}$.
\end{thm}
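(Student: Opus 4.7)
The natural approach is to pass to a resolution of singularities and reduce to the known smooth case. The plan is to start by choosing a log resolution $\pi : \tilde{X} \to X$; since $X$ is rationally connected and rational connectedness is a birational invariant among smooth projective varieties, the smooth model $\tilde{X}$ is again rationally connected. Kollár's vanishing \cite[Cor. IV.3.8]{Kol96} then gives $H^0(\tilde{X}, (\Omega^1_{\tilde{X}})^{\otimes m}) = 0$ for every $m > 0$, so the theorem will follow once sections of $(\Omega^1_X)^{[\otimes m]}$ on $X$ are identified with sections of $(\Omega^1_{\tilde{X}})^{\otimes m}$ on $\tilde{X}$.

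The core of the proof is therefore an extension statement. Given a reflexive pluri-form $\sigma \in H^0(X, (\Omega^1_X)^{[\otimes m]})$, pullback along $\pi$ yields a regular pluri-form on the preimage of the smooth locus of $X$, and what must be shown is that $\pi^*\sigma$ extends regularly across the exceptional divisor $E = \Exc(\pi)$ to give a section on all of $\tilde{X}$. In the opposite direction, any regular tensor form on $\tilde{X}$ pushes forward to a reflexive section on $X$, and the two operations are inverse to each other away from $E$, so the entire content of the theorem lies in the extension across $E$.

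The main obstacle is this extension, and it is here that the hypotheses of factorial plus canonical enter. Working locally near a generic point of a component $E_i$ of $E$, the discrepancy inequality coming from canonicity --- namely $K_{\tilde{X}} = \pi^*K_X + \sum a_i E_i$ with $a_i \ge 0$ --- controls the order of poles that $\pi^*\sigma$ could acquire along $E_i$. Factoriality ensures that reflexive pullback of Weil divisors behaves like ordinary Cartier pullback, so that no fractional correction spoils the pole count. The expected outcome is that $\pi^*\sigma$ has no pole along any $E_i$, so that it extends to a global section, completing the identification $H^0(X, (\Omega^1_X)^{[\otimes m]}) \cong H^0(\tilde{X}, (\Omega^1_{\tilde{X}})^{\otimes m})$.

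The hard part is the local pole computation along each exceptional component: one must estimate, for a tensor power of a $1$-form, the valuation along $E_i$ in terms of the discrepancy $a_i$, and show that canonicity is exactly strong enough to rule out poles. This is essentially an analogue, for the tensor power $(\Omega^1)^{\otimes m}$, of the GKKP extension theorem \cite{GKKP11} for $\Omega^{[p]}$, and is where the bulk of the technical effort goes.
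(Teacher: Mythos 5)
You are attempting to prove a statement that this paper never actually proves: Theorem \ref{can-fac-thma} is quoted verbatim from \cite{GKP12}, so there is no internal proof to compare against, and your argument has to stand on its own. Its reduction steps are fine: a resolution $\pi\colon\widetilde X\to X$ of a rationally connected variety is again rationally connected, Koll\'ar's vanishing \cite[Cor. IV.3.8]{Kol96} applies on $\widetilde X$, and the theorem would indeed follow from the single claim that every $\sigma\in H^0(X,(\Omega^1_X)^{[\otimes m]})$ pulls back to $\widetilde X$ without poles along $\Exc(\pi)$. But that claim is where the entire content lies, you do not prove it, and --- this is the essential point --- the local statement you propose to prove it with (``near a generic point of $E_i$, the discrepancy $a_i\geqslant 0$ together with factoriality forces pole order zero'') is \emph{false}. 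Take the $E_8$ du Val singularity $X=\mathbb{C}^2/G=\{x^2+y^3+z^5=0\}$, where $G$ is the binary icosahedral group. It is canonical (every exceptional divisor of the minimal resolution has discrepancy $0$), and it is factorial, even after completion, because $G$ is perfect: $\mathrm{Cl}(X)\cong\mathrm{Hom}(G,\mathbb{C}^*)=0$, the classical example of a singular UFD. Nevertheless the degree-$12$ icosahedral invariant $F_{12}$, evaluated on $(du,dv)$, is a $G$-invariant constant-coefficient symmetric tensor, hence defines a non-zero element of $H^0(X,(\Omega^1_X)^{[\otimes 12]})$, and this form does \emph{not} extend: on $\mathrm{Bl}_0\,\mathbb{C}^2$ with $u=t$, $v=ts$, the generic stabilizer of the exceptional curve is the center $\{\pm 1\}$ acting by $t\mapsto -t$, so near a general point of the corresponding exceptional curve $C_0$ downstairs one has coordinates $(w,s)=(t^2,s)$, and the component of the pulled-back form with no $ds$-factors equals $2^{-12}F_{12}(1,s)\,w^{-6}\,dw^{\otimes 12}$ --- a pole of order $6$ along a divisor of discrepancy $0$.

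The conceptual error is that discrepancies control only the determinant $\det\Omega^{[1]}_X=\sO_X(K_X)$, i.e.\ pluri-canonical forms; they say nothing about valuations of other tensors (here a symmetric power), and factoriality, being a condition on rank-one objects (Weil divisors), cannot repair this in any local computation --- it holds in the $E_8$ example. In other words, the ``analogue of the GKKP extension theorem for $(\Omega^1)^{\otimes m}$'' invoked in your last paragraph is precisely the statement that is known to fail: that is the point of \cite[Example 3.7]{GKP12} in the non-factorial case, and the $E_8$ germ shows it fails even with factoriality. The paper's own Example \ref{exmp-curve-1} makes the same point with strictly positive discrepancies: a $\mathbb{Q}$-factorial \emph{terminal} rationally connected threefold carrying pluri-forms, which consequently cannot extend to any resolution. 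So Theorem \ref{can-fac-thma} is irreducibly global: a projective rational surface with a single $E_8$ point (contract the eight $(-2)$-curves in a $II^*$-fiber of a rational elliptic surface) is factorial, canonical and rationally connected, carries the local non-extending form above near its singular point, and yet by the theorem has no global pluri-form; no argument carried out ``near a generic point of $E_i$'' can tell these situations apart. Any correct proof must use the global hypotheses in an essential way --- compare the route this paper itself takes in dimension three (Lemma \ref{inj-mmp}, an MMP ending in a Mori fibration, and an analysis of multiple fibers, where factoriality-type hypotheses enter through global integrality of intersection numbers of Weil divisors with curves rather than through pole estimates), or the global arguments of \cite{GKP12}.
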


\begin{thm}[{\cite[Thm. 5.1]{GKKP11}}]
\label{klt-thmb} 
If $(X,D)$ is a klt pair such that $X$ is rationally connected, then $H^0(X,\Omega_X^{[m]})=\{0\}$ for $m>0$, where $\Omega_X^{[m]}$ is the reflexive hull of $\Omega_X^{m}$.
\end{thm}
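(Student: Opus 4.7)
The statement has two distinct parts: (a) producing the fibration $p: X \to \p^1$, and (b) computing $H^0(X,(\Omega_X^1)^{[\otimes m]})$ in terms of orbifold data on $\p^1$. My plan is to first extract the fibration from the non-zero pluri-form, via a Bogomolov/Campana--Paun type argument on a resolution, and then identify the space of pluri-forms with an explicit space of sections on the base by means of the relative cotangent sequence and a local analysis at the multiple fibers.

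\textbf{Step 1: producing the fibration.} Given $0 \neq \sigma \in H^0(X,(\Omega_X^1)^{[\otimes m]})$, I would first consider the saturated reflexive rank-$1$ subsheaf $\cL \hookrightarrow (\Omega_X^1)^{[\otimes m]}$ generated by $\sigma$. Since $X$ is a $\bQ$-factorial terminal threefold, its singular locus is zero-dimensional, so $\cL$ is a genuine line bundle on the smooth open $X^\circ$, whose complement has codimension at least two, and $\cL$ corresponds to a Weil divisor class on $X$. Passing to a log-resolution $\pi:\widetilde X \to X$ with reduced exceptional divisor $E$, positivity of discrepancies (terminality of $X$) allows us to realize the pull-back of $\sigma$ as a global section of $(\Omega^1_{\widetilde X}(\log E))^{\otimes m}$ twisted by an effective divisor supported on $E$. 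Now $\widetilde X$ is smooth and rationally connected, so $H^0(\widetilde X, (\Omega^1_{\widetilde X})^{\otimes m})=0$ by the smooth case of Theorem~\ref{can-fac-thma}; the non-vanishing of the logarithmic pluri-form is what forces non-trivial structure along $E$. A Campana--Paun type orbifold Bogomolov theorem, applied to the rank-$1$ subsheaf of $(\Omega^1_{\widetilde X}(\log E))^{\otimes m}$ saturated from $\sigma$, then produces a dominant rational map $\widetilde X \dashrightarrow C$ to a smooth curve $C$. Graber--Harris--Starr gives $C \cong \p^1$, and after resolving indeterminacies and pushing down via $\pi$ we obtain the desired morphism $p: X \to \p^1$.

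\textbf{Step 2: the orbifold formula.} With $p$ in hand, I would study $H^0(X,(\Omega_X^1)^{[\otimes m]})$ using the relative cotangent sequence $0 \to p^*\Omega_{\p^1}^1 \to \Omega_X^1 \to \Omega_{X/\p^1}^1 \to 0$ on the smooth locus; reflexivity lets this filter $(\Omega_X^1)^{[\otimes m]}$ with graded pieces $p^*\omega_{\p^1}^{\otimes k}\otimes (\Omega_{X/\p^1}^1)^{[\otimes(m-k)]}$. A general fiber of $p$ is a smooth rationally connected surface, hence carries no non-zero pluri-forms, which forces the global sections to sit inside the saturation $\cM_m\subset (\Omega_X^1)^{[\otimes m]}$ of the bottom term $p^*\omega_{\p^1}^{\otimes m}$. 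It then remains to identify $\cM_m$ explicitly as $p^*\omega_{\p^1}^{\otimes m}(\Delta_m)$ for an effective divisor $\Delta_m$ supported on the fibers of $p$: this is local on $\p^1$, and near $z$ with $p^*z = \sum_i a_i F_i$ and $a_i \geq m(p,z)$, a local parameter $t$ at $z$ gives $p^*dt$ vanishing to order $a_i -1$ along $F_i$, so a rational section of $p^*\omega_{\p^1}^{\otimes m}$ with pole order $c$ at $z$ gives a reflexive pluri-form on $X$ iff $c\cdot a_i \leq (a_i-1)m$ for every $i$, equivalently $c \leq \lfloor(m(p,z)-1)m/m(p,z)\rfloor$. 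Combining this with $\omega_{\p^1}^{\otimes m}\cong \cO_{\p^1}(-2m)$ yields the stated natural isomorphism.

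\textbf{Main obstacle.} The key difficulty is Step~1: extracting a global fibration from a single reflexive pluri-form on a singular rationally connected threefold. The argument must weave together singularity theory (terminal $\bQ$-factorial, so pull-back of reflexive sheaves to a resolution is well-controlled), rational connectedness (via Graber--Harris--Starr and Theorem~\ref{can-fac-thma}), and an orbifold positivity/Bogomolov statement. A secondary subtlety is that reflexive pluri-forms are not obviously birationally invariant between terminal $\bQ$-factorial models, so one cannot freely run MMP; the analysis must take place directly on $X$ after a single carefully chosen resolution, and the suppression of higher graded pieces in Step~2 requires controlling contributions from singular/multiple fibers (not just the generic fiber), which is where the floor function in the formula ultimately comes from.
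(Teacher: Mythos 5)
Your proposal does not address the statement at hand. The statement is Theorem~\ref{klt-thmb}, quoted verbatim from \cite[Thm. 5.1]{GKKP11}: for a klt pair $(X,D)$ with $X$ rationally connected, $H^0(X,\Omega_X^{[m]})=\{0\}$ for all $m>0$. This is a \emph{vanishing} theorem for reflexive $m$-forms (exterior powers $\Omega_X^{[m]}$) in arbitrary dimension, and the paper offers no proof of its own---it is an external citation whose actual proof in \cite{GKKP11} rests on the extension theorem for differential forms on log resolutions of klt pairs, reducing to the smooth rationally connected case. What you have sketched instead is Theorem~\ref{terminal-3-fold}: starting from a non-zero $\sigma\in H^0(X,(\Omega_X^1)^{[\otimes m]})$ on a terminal $\bQ$-factorial \emph{threefold}, you construct a fibration $p:X\to\p^1$ and compute the space of pluri-forms. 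No argument of that shape can establish Theorem~\ref{klt-thmb}: its conclusion is that no non-zero form exists, so a proof cannot begin by assuming one; the hypothesis is klt, not terminal (klt threefolds need not have isolated singularities, so your ``zero-dimensional singular locus'' step already fails); the sheaf is $\Omega_X^{[m]}$ rather than $(\Omega_X^1)^{[\otimes m]}$, and this distinction is essential---as the paper notes, $\Omega_X^m$ is a direct summand of $(\Omega_X^1)^{\otimes m}$, so vanishing for tensor powers implies vanishing for wedge powers but not conversely, and by Totaro's example (cf.\ Example~\ref{KB-pf}) a klt rationally connected variety can have torsion effective $K_X$, so the tensor-power analogue of Theorem~\ref{klt-thmb} is actually \emph{false} in the klt setting.

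Even read charitably as an attempt at Theorem~\ref{terminal-3-fold}, your Step 1 has a genuine gap: you invoke ``a Campana--Paun type orbifold Bogomolov theorem'' applied to a rank-$1$ subsheaf of $(\Omega^1_{\widetilde X}(\log E))^{\otimes m}$ to produce a map to a curve. Bogomolov--Sommese-type results (including \cite[Thm. 7.2]{GKKP11}, which the paper does use, in Lemma~\ref{subsheave-Kor-dim}) bound the Kodaira dimension of invertible subsheaves of $\Omega_X^{[p]}$---honest $p$-forms---and do not apply to subsheaves of tensor powers $(\Omega^1)^{\otimes m}$ for $m\geq 2$; moreover they bound Kodaira dimension rather than producing a fibration, and extracting the fibration is precisely the hard content. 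The paper obtains it by MMP, which your ``main obstacle'' paragraph wrongly rules out: Lemma~\ref{inj-mmp} gives an injection of pluri-forms into the Mori fiber space $X'$ (the inverse map contracts no divisor; no birational \emph{invariance} is needed), \cite[Thm. 2.1]{Ou13} rules out $\dim Z=0$, and when $\dim Z=2$ terminality forces reduced fibers in codimension one (Lemma~\ref{fiber-can-fibration}), so Theorem~\ref{dim2-picard-1} applies with $\D=0$ and yields a further fibration $Z'\to\p^1$ (Lemma~\ref{ter-fibraiton-curve}); finally Proposition~\ref{iso-graph} and Lemma~\ref{ter-reg} show the induced rational map $X\dashrightarrow\p^1$ is regular, recovering the isomorphism of pluri-form spaces on $X$ itself. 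Your Step 2 local computation does agree with the paper's Lemma~\ref{fibration-section} combined with Lemma~\ref{pushdown-form}, but it sits downstream of the unproved Step 1---and, again, of a theorem that is not the one you were asked to prove.
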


Note that $\Omega_X^{m}$ is a direct summand of $(\Omega_X^1)^{\otimes m}$, hence $H^0(X,(\Omega_X^1)^{[\otimes m]})=\{0\}$ implies $H^0(X,\Omega_X^{[m]})=\{0\}$. However, there are examples of klt rationally connected varieties whose canonical divisor is effective (see \cite[Exmp. 10]{Tot10} or \cite[Exmp. 43]{Kol08}). Moreover, Theorem \ref{can-fac-thma} is not true if the variety is not factorial. There is a counterexample given in \cite[Example 3.7]{GKP12}. In \cite{Ou13}, we classify all rationally connected surfaces $X$ with canonical singularities such that $H^0(X,(\Omega_X^1)^{[\otimes m]}) \neq \{0\}$ for some $m>0$. We obtain the following theorem. 

\begin{thm}[{\cite[Thm. 1.3]{Ou13}}]
\label{can-surf-thmc}
Let $X$ be a rationally connected surface with canonical singularities. Then $H^0(X,(\Omega_X^1)^{[\otimes m]}) \neq \{0\}$ for some $m>0$ if and only if there is a fibration $p:X \to \p^1$ whose general fibers are isomorphic to $\p^1$ such that $\sum_{z\in \p^1}[\frac{m(p,z)-1}{m(p,z)}]\geqslant 2$, where $m(p,z)$ is the smallest positive coefficient in the divisor $p^*z$.
\end{thm}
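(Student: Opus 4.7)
The plan is to prove the two directions of the biconditional separately. Direction $(\Leftarrow)$ is a direct pullback construction of pluri-forms from orbifold differentials on $\p^1$. Direction $(\Rightarrow)$ is considerably more delicate: one must produce a suitable fibration out of a given pluri-form on the minimal resolution of $X$.

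\textbf{Direction $(\Leftarrow)$.} Assume the fibration $p\colon X\to \p^1$ is given, with general fibre $\p^1$ and multiple fibres $p^*z_i$ of minimal multiplicity $e_i=m(p,z_i)$, satisfying $\sum_i (e_i-1)/e_i \ge 2$. Near $z_i$, with local coordinate $t$ on $\p^1$ and local equation $s$ for an $e_i$-component of $p^*z_i$, we have $p^*t=u\cdot s^{e_i}$ for a unit $u$, so $p^*(dt^{\otimes m})$ vanishes along that component to order $(e_i-1)m$. Consequently $p^*\!\bigl((dt)^{\otimes m}/t^k\bigr)$ extends to a reflexive section of $(\Omega_X^1)^{[\otimes m]}$ precisely when $k \le \lfloor (e_i-1)m/e_i\rfloor$. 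This yields an injection
\[
H^0\!\bigl(\p^1,\sO_{\p^1}(-2m+\textstyle\sum_i \lfloor (e_i-1)m/e_i\rfloor\,z_i)\bigr) \hookrightarrow H^0(X,(\Omega_X^1)^{[\otimes m]}).
\]
Under the hypothesis, the source has non-negative degree for sufficiently divisible $m$, giving the desired pluri-form.

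\textbf{Direction $(\Rightarrow)$.} Let $\pi\colon \tilde X\to X$ be the minimal resolution. Since canonical surface singularities are Du Val, $\pi$ is crepant and $\tilde X$ is a smooth, rationally connected (hence rational) surface. Given $0\ne\omega\in H^0(X,(\Omega_X^1)^{[\otimes m]})$, pull back to $\tilde X\setminus\Exc(\pi)$; using the description of Du Val points as quotients $\bC^2/\Gamma$ and the $\Gamma$-invariance of the lifted tensor on the cover, $\pi^*\omega$ extends to a section of $\textup{Sym}^m\Omega_{\tilde X}^1\otimes\sO_{\tilde X}(A)$ for some effective $\pi$-exceptional divisor $A$. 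Since $H^0(\tilde X,\textup{Sym}^m\Omega_{\tilde X}^1)=0$ on a rational surface, $A$ must be nontrivial. Factoring $\omega$ into linear forms in $\p(\Omega_{\tilde X}^1)$ produces algebraic foliations on $\tilde X$, and a Bogomolov--Miyaoka-type argument, leveraging rational connectedness, forces at least one of them to be algebraically integrable. The resulting fibration $q\colon\tilde X\to C$ satisfies $C\cong\p^1$ by rationality of $\tilde X$, and has general fibre $\p^1$ by Noether--Enriques. The trees of $(-2)$-curves forming $\Exc(\pi)$ are contracted by $q$, so $q$ descends to a fibration $p\colon X\to\p^1$ with general fibre $\p^1$. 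Finally, reversing the computation of direction $(\Leftarrow)$ shows that the existence of $\omega$ forces $\sum (m(p,z)-1)/m(p,z)\ge 2$.

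\textbf{Main obstacle.} The critical technical step is controlling $\pi^*\omega$ near the Du Val points. Whereas the reflexive differentials $\Omega_X^{[p]}$ extend cleanly to $\tilde X$ by \cite{GKKP11}, tensor powers $(\Omega_X^1)^{[\otimes m]}$ do not share this functoriality, so the polar order of $\pi^*\omega$ along each exceptional $(-2)$-curve must be extracted by an explicit $\Gamma$-invariance calculation on the covering $\bC^2$. A second subtlety is producing an algebraically integrable foliation from a pluri-tensor (rather than a one-form); the factorisation of $\omega$ into linear forms may require passing to a finite cover of $\tilde X$, and ruling out leaves of positive genus, to force $C\cong\p^1$ and $\p^1$-fibres, relies essentially on the rational connectedness of $\tilde X$.
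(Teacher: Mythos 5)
Your direction $(\Leftarrow)$ is correct, and it is essentially the same local computation that this paper uses in \S 3.1 (Lemma \ref{pushdown-form} and the discussion following it): pulling back pluri-forms on $\p^1$ with poles of order at most $\lfloor (e_i-1)m/e_i\rfloor$ at the multiple points and checking regularity in codimension one. The problems are all in direction $(\Rightarrow)$, where your foliation strategy has three genuine gaps. First, algebraic integrability: rational connectedness of $\tilde X$ does \emph{not} force a foliation on $\tilde X$ to be algebraically integrable --- Jouanolou's examples give foliations on $\p^2$ without a single algebraic leaf, so whatever ``Bogomolov--Miyaoka-type argument'' is meant here must draw its strength entirely from the pole divisor $A$ being $\pi$-exceptional (i.e.\ from the form being regular on $X$), and no such argument is supplied. (Note also that a section of $(\Omega_X^1)^{[\otimes m]}$ need not give a non-zero section of $\textup{Sym}^m\Omega^1_{\tilde X}\otimes\sO_{\tilde X}(A)$; on a surface the tensor power decomposes as $\bigoplus_b \textup{Sym}^{m-2b}\Omega^1_{\tilde X}\otimes\omega_{\tilde X}^{\otimes b}$, and the symmetric summand of your form may well vanish, which changes the positivity bookkeeping.) Second, even granting an integrable foliation $q\colon\tilde X\to C$, rationality of $\tilde X$ does not make the general fibre rational --- rational elliptic surfaces fibre over $\p^1$ with elliptic fibres --- and Noether--Enriques does not apply until one already knows the fibres are rational, so ``general fibre $\p^1$'' is unjustified. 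Third, the assertion that $\Exc(\pi)$ is contracted by $q$ (needed to descend $q$ to $X$) is stated without proof; a priori a $(-2)$-curve could be horizontal over $C$, and ruling this out again requires using the regularity of the form on $X$.

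For comparison: the paper does not reprove this statement (it quotes \cite[Thm. 1.3]{Ou13}), but both the proof in \cite{Ou13} and the threefold analogue proved here (Theorem \ref{terminal-3-fold}, via \S 3 and \S 5) follow a different route that avoids all three issues at once. One runs a MMP $X\to X'$ (for surfaces this is a morphism, a composition of divisorial contractions, so no descent problem arises), obtaining a Mori fibre space $X'$. The case of Picard number one is excluded because such a rationally connected variety carries no pluri-forms (\cite[Thm. 2.1]{Ou13}, the same result invoked in this paper's Lemma \ref{ter-fibraiton-curve}); hence there is a Mori fibration $X'\to\p^1$, whose general fibres are automatically smooth rational curves --- this is exactly the point your approach struggles to establish. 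The pluri-forms are then identified with sections of $(\Omega^1_{\p^1})^{\otimes m}\otimes p_*\sO(mR)$ by the analogues of Lemmas \ref{fibration-section} and \ref{pushdown-form}, which yields the multiplicity inequality, and the fibration together with the inequality transfers back to $X$ along the birational morphism $X\to X'$. If you want to salvage your approach, the missing ingredient is Bogomolov's lemma (a saturated invertible subsheaf $\sL\subseteq\Omega^1_S$ of a smooth projective surface has Kodaira dimension at most $1$, with equality forcing a fibration), applied after showing that the relevant subsheaf extracted from your form has Kodaira dimension exactly $1$; but that verification is precisely where the work lies, and it is absent from the proposal.
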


In this paper, we will study the case of threefolds. We are interested in the structure of rationally connected threefolds which carry non-zero pluri-forms and we try to find out the source of these forms. If the threefold has terminal singularities, we prove the same result as Theorem \ref{can-surf-thmc} (see Theorem \ref{terminal-3-fold} below). In this case, there is a fibration from the threefold to $\p^1$ and the source of non-zero pluri-forms is the ramification of the fibration. 

\begin{thm}
\label{terminal-3-fold}
Let $X$ be a projective rationally connected threefold with $\mathbb{Q}$-factorial terminal singularities. Then $H^0(X, (\Omega_X^1)^{[\otimes m]}) \neq \{0\}$ for some $m>0$ if and only if there is a fibration $p: X \to \mathbb{P}^1$ whose general fibers are smooth rationally connected surfaces such that $\sum_{z\in \p^1}\frac{m(p,z)-1}{m(p,z)}\geqslant 2$, where $m(p,z)$ is the smallest positive coefficient in the divisor $p^*z$. Moreover, if it is the case, then for all $m>0$, we have $H^0(X, (\Omega_X^1)^{[\otimes m]}) \cong H^0(\p^1, \sO_{\p^1}(-2m+\sum_{z\in \p^1}[\frac{(m(p,z)-1)m}{m(p,z)}]))$, which is of dimension $1-2m+\sum_{z\in \p^1}[\frac{(m(p,z)-1)m}{m(p,z)}]$. 
\end{thm}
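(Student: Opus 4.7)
The plan is to prove the two directions separately, with the explicit formula for $H^0(X, (\Omega_X^1)^{[\otimes m]})$ emerging from the analysis of the ``only if'' direction.

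For the ``if'' direction, I would construct the pluri-forms explicitly via orbifold pullback along $p$. Set $\D = \sum_{z \in \p^1} \frac{m(p,z)-1}{m(p,z)} \cdot z$, a $\bQ$-divisor on $\p^1$. Sections of $\sO_{\p^1}(-2m + \sum_z [\frac{(m(p,z)-1)m}{m(p,z)}])$ correspond to rational $m$-pluri-forms on $\p^1$ with controlled pole orders at the branch points. Pulling back by $p$, the divisibility of every coefficient of $p^*z$ by $m(p,z)$ precisely compensates the poles, yielding a regular reflexive pluri-form on $X$. The condition $\sum_z \frac{m(p,z)-1}{m(p,z)} \ge 2$ is exactly what guarantees non-triviality of $H^0$ on the base for some $m > 0$.

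For the ``only if'' direction, I would run the MMP on $X$ to obtain $\phi: X \map X'$ with $X'$ admitting a Mori fiber space $\pi: X' \to Y'$. A preliminary technical point is that the MMP preserves reflexive pluri-forms, since terminal threefold singularities are isolated and each step of the MMP is an isomorphism in codimension one. I then split into cases by $\dim Y'$. When $\dim Y' = 1$, we have $Y' \cong \p^1$ with general fibers del Pezzo surfaces (hence smooth rationally connected), and composing $\pi$ with $\phi^{-1}$ produces the desired fibration $p: X \to \p^1$. When $\dim Y' = 2$, $\pi$ is a conic bundle over a rationally connected surface $Y'$; since general fibers are $\p^1$ and carry no pluri-forms, the pluri-forms on $X'$ descend to non-zero pluri-forms on $Y'$, and Theorem \ref{can-surf-thmc} applied to (a resolution of) $Y'$ produces a fibration $Y' \to \p^1$ that composes with $\pi$ to give the desired $p$.

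The main obstacle is the case $\dim Y' = 0$, where $X'$ is a terminal $\bQ$-factorial Fano threefold of Picard number one and we must rule out pluri-forms. Theorem \ref{can-fac-thma} does not apply directly since $X'$ need not be factorial, and \cite[Example~3.7]{GKP12} shows that naive extensions fail. My strategy is to combine Campana--P\u{a}un type generic semistability of $(\Omega_{X'}^1)^{[\otimes m]}$ with respect to the polarization $-K_{X'}$ with the strict $K$-negativity of $X'$: a non-zero pluri-form would generate a saturated rank-one subsheaf whose determinant has non-negative degree on sufficiently general movable curves, contradicting the fact that $(\Omega_{X'}^1)^{[\otimes m]}$ has strictly negative average slope in the Fano direction. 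Handling the isolated singular points rigorously may require passing to a terminalization or a carefully chosen partial resolution, which I expect to be the most delicate step. Once this case is ruled out, the explicit formula for $H^0(X, (\Omega_X^1)^{[\otimes m]})$ follows: since general fibers of $p$ are smooth rationally connected surfaces, the vertical part of $p_*(\Omega_X^1)^{[\otimes m]}$ vanishes by \cite[Cor.~IV.3.8]{Kol96}, so all pluri-forms arise by pullback along $p$. A local computation at each branch point $z$, controlled by the multiplicity of $p^*z$, identifies the relevant sheaf on the base with $\sO_{\p^1}(-2m+\sum_z [\frac{(m(p,z)-1)m}{m(p,z)}])$, and Riemann--Roch on $\p^1$ yields the claimed dimension.
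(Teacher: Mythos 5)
Your ``if'' direction and the closing pushforward computation follow essentially the paper's own route (Lemmas \ref{pushdown-form} and \ref{fibration-section}), and the skeleton of your ``only if'' direction (run an MMP, split according to the dimension of the Mori-fibre-space base) is also the paper's. But the proposal omits the step that the paper itself flags as the main difficulty and that occupies all of Section \ref{Proof of Theorem 1.4}: the MMP only gives a birational map $f_X\colon X \map X'$, so composing with $X'\to\p^1$ yields a priori only a \emph{rational} map $X \map \p^1$, whereas the theorem asserts a morphism from $X$ itself, and the formula is stated in terms of the multiplicities $m(p,z)$ of that morphism. Writing that composing $\pi$ with $\phi^{-1}$ ``produces the desired fibration $p\colon X\to\p^1$'' assumes exactly what must be proved. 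The paper proves this in Lemma \ref{ter-reg}: terminality (smoothness in codimension $2$) is used on the normalized graph to show that a non-zero pluri-form, being a section of $((pr_2^*\Omega^1_{\p^1})^{sat})^{[\otimes m]}$, would restrict non-trivially to any horizontal curve, yet must vanish on a horizontal curve that $pr_1$ contracts to a smooth point; this forces the indeterminacy locus to have codimension $\geqslant 3$, and then $\mathbb{Q}$-factoriality (closures of two distinct fibres are divisors meeting only in codimension $\geqslant 3$, hence disjoint) gives regularity. Without this, neither the existence statement nor the isomorphism $H^0(X,(\Omega^1_X)^{[\otimes m]})\cong H^0(\p^1,\cdot)$ is established, since $H^0(X,\cdot)$ can be strictly smaller than $H^0(X',\cdot)$ (Remark \ref{non-iso-rem}), so one cannot simply compute on $X'$. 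Relatedly, your justification that the MMP preserves pluri-forms because ``each step of the MMP is an isomorphism in codimension one'' is false for divisorial contractions; what is true and needed (Lemma \ref{inj-mmp}) is that $f_X^{-1}$ contracts no divisor, which gives the injection $H^0(X,\cdot)\into H^0(X',\cdot)$.

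There are two further gaps in the case analysis. When $\dim Y'=2$, your claim that the pluri-forms on $X'$ ``descend to non-zero pluri-forms on $Y'$'' because the fibres are $\p^1$ is not correct as stated: by Lemma \ref{fibration-section} the forms are sections of $((\pi^*\Omega^1_{Y'})^{sat})^{[\otimes m]}$, and the saturation is strictly bigger than the pullback along multiple fibres; Example \ref{over-surf-pf} is precisely a Mori fibration with fibre $\p^1$ over $\p^2$ whose canonical total space carries pluri-forms while $\p^2$ carries none. Descent requires the fibres to be reduced in codimension one, which is exactly where terminality of $X'$ enters (Lemma \ref{fiber-can-fibration}); the paper then concludes via Theorem \ref{dim2-picard-1} (a slope/cyclic-cover/Bogomolov--Sommese argument showing the MMP of the base pair ends with Picard number $2$) rather than by descending forms. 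Moreover, applying Theorem \ref{can-surf-thmc} to ``a resolution of'' $Y'$ cannot work, since pluri-forms do not lift to resolutions (a smooth rationally connected surface has none); it must be applied to $Y'$ itself, using that $Y'$ has canonical singularities by \cite[Cor. 1.2.8]{MoP08}. Finally, for $\dim Y'=0$ you offer only a strategy (a Campana--P\u{a}un type semistability argument) and explicitly leave its delicate singular-point analysis open, whereas the paper disposes of this case by quoting \cite[Thm. 2.1]{Ou13}. So at each of the three points where the real work of the theorem lies, the proposal is either incorrect or incomplete.
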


\begin{const}
\label{const-thm} Thanks to Theorem \ref{terminal-3-fold}, every rationally connected threefold $X$ with $\mathbb{Q}$-factorial terminal singularities such that $H^0(X, (\Omega_X^1)^{[\otimes m]}) \neq \{0\}$ for some $m>0$ can be constructed as follows. There is a  fibration $q:T \to B$ from a normal threefold  $T$ to a smooth curve $B$ which has positive genus such that $m(q,b)=1$ for all $b \in B$. There is a finite group $G$ which acts on $T$ and $B$ such that $q$ is $G$-equivariant. By taking the quotient by $G$, we have $B/G\cong \p^1$ and $T/G \cong X$. For more details, see \S \ref{dimZ=1}.
\end{const}

If the threefold $X$ has canonical singularities, we also obtain some necessary conditions for existence of non-zero pluri-forms (see \S 3.2 and \S 4). Roughly speaking, if $f_X:X \dashrightarrow X'$ is the result of a MMP for $X$, then there is a fibration from $X'$ to a variety $Z$ of positive dimension such that the forms come from the base $Z$. The difficulty in this situation is that the rational map $X \dashrightarrow Z$ is not always regular. And even it is, it may not be equidimensional. If klt singularities are permitted for our threefolds, then there exist other structures. We give an example of rationally connected threefold of general type in Example \ref{klt-type-gal}.

The main objective of this paper is to proof Theorem \ref{terminal-3-fold}. We will do this in several steps. First, we consider a projective rationally connected  threefold $X$ with $\mathbb{Q}$-factorial canonical singularities which carries non-zero pluri-forms. Since it is rationally connected and with canonical singularities,  its canonical divisor $K_X$ is not pseudo-effective. Hence if $f_X:X\to X'$ is the result of a MMP for $X$, then $K_{X'}$ is not pseudo-effective neither. We have a Mori fibration $p: X' \to Z$ where $Z$ is a normal variety of dimension less than $3$. However, since $X$ carries non-zero pluri-forms, we can show that $H^0(X', (\Omega^1_{X'})^{[\otimes m]}) \neq \{0\}$ for some $m>0$. Then we obtain dim$Z>0$ by \cite[Thm. 2.1]{Ou13}. Hence either dim$Z=1$ or dim$Z=2$. We will treat this two cases separately in \S\ref{dimZ=1} and \S\ref{dimZ=2}. If dim$Z=1$, then $Z\cong \p^1$ and we will show that $K_{\p^1} + \sum_{z\in \p^1}\frac{m(p,z)-1}{m(p,z)} z$ is effective, which is the same condition as in Theorem \ref{terminal-3-fold}. If dim$Z=2$, then we will define a $\mathbb{Q}$-divisor $\D$ on $Z$ (see Theorem \ref{dim2-picard-1} for the definition). We will prove that either $K_Z+\D$ is an effective $\mathbb{Q}$-divisor or there is a fibration $X'\to \p^1$ such that we can reduce to the situation of \S\ref{dimZ=1}. In the last section, we assume that the variety $X$ has terminal singularities. In this case, there is always a fibration from $X'$ to $\p^1$ and we are always in the situation of \S\ref{dimZ=1}. This fibration induces a dominant rational map from $X$ to $\p^1$. In the end, we will prove that this rational map is regular and complete the proof of Theorem \ref{terminal-3-fold}

\section{Preliminaries}
\label{Preliminaries}

Throughout this paper, we will work over $\mathbb{C}$, the field of complex numbers. Unless otherwise specified, every
variety is an integral $\mathbb{C}$-scheme of finite type. A curve is a variety of dimension 1, a surface is a
variety of dimension 2 and a threefold is a variety of dimension 3. For a normal variety $X$, let $K_X$ be a canonical divisor of $X$. We denote the sheaf of K\"{a}hler differentials by $\Omega_{X}^1$. Denote
$\bigwedge^p\Omega_{X}^1$ by $\Omega_{X}^p$ for $p\in \mathbb{N}$. Let $\Omega_X^{[p]}$ (resp. $(\Omega_X^{1})^{[\otimes p]}$) be the reflexive hull of $\Omega_X^p$ (resp. $(\Omega_X^1)^{\otimes p}$). We say that a normal variety $X$ carries non-zero pluri-forms if $H^0(X,(\Omega_X^1)^{[\otimes m]}) \neq \{0\}$ for some $m>0$. Let Pic$(X)_{\mathbb{Q}}=$Pic$(X) \otimes \mathbb{Q}$, where Pic$(X)$ is the Picard group of $X$. A $\mathbb{Q}$-divisor $\D$ on $X$ is called effective if there is a positive integer $k$ such that $kD$ is a divisor and $\sO_X(kD)$ has a non-zero global section.

A fibration $p:X\to Z$ between normal quasi-projective varieties is a dominant proper morphism such that every fiber is connected, that is $p_*\sO_X \cong \sO_Z$. If $X \to Z$ is just a proper morphism, then we have the Stein factorisation $X\to V \to Z$ such that $X\to V$ is a fibration and $V\to Z$ is finite (see \cite[Cor. III. 11.5]{Har77}). A Fano fibration is a fibration $X\to Z$ such that $-K_X$ is relatively ample. A Mori fibration is a Fano fibration such that the relative Picard number is $1$.

Let $p:X \to Z$ be a morphism and $D$ be a prime $\mathbb{Q}$-Cartier Weil divisor in $Z$. Let $k$ be a positive integer such that $kD$ is Cartier. Denote the $\mathbb{Q}$ divisor $\frac{1}{k}p^*(kD)$ by $p^*D$. We define the multiplicity $m(p,D)=$min$\{$coefficient of $E$ in $p^*D \ | \ E$ is an irreducible component of $p^*D$ which dominates $D\}$. There are only finitely many divisors $D$ such that $m(p,D)>1$. Moreover, if $p:X \to Z$ is a morphism from a normal variety to a smooth curve, then we define the ramification divisor by $R=\sum_{z \in Z} p^*z - (p^*z)_{red}$ where $ (p^*z)_{red}$ is the sum of the irreducible components of $p^*z$.

\subsection{Reflexive sheaves on normal varieties}
\label{Reflexive sheaves on normal varieties}
In this subsection, we gather some properties of reflexive sheaves. For a coherent sheaf $\mathscr{F}$ on a normal variety $X$, we denote by $\mathscr{F}^{**}$ the double dual of $\mathscr{F}$. The sheaf $\sF$ is reflexive if and only if $\sF \cong \sF^{**}$. In particular, $\sF^{**}$ is reflexive and we also call it the reflexive hull of $\sF$. Denote $(\mathscr{F}^{\otimes m})^{**}$  by $\mathscr{F}^{[\otimes m]}$ and $(\bigwedge^m \mathscr{F})^{**}$  by $\mathscr{F}^{[\wedge m]}$ for any $m>0$. For two coherent sheaves $\mathscr{F}$ and $\mathscr{G}$, let $\mathscr{F} [\otimes] \mathscr{G} = (\mathscr{F} \otimes \mathscr{G})^{**}$. The following proposition is an important criterion for reflexive sheaves on normal varieties.

\begin{prop}[{\cite[Prop. 1.6]{Har80}}]
\label{refle-cod-2}
Let $\mathscr{F}$ be a coherent sheaf on a normal variety $X$. Then $\mathscr{F}$ is reflexive
if and only if $\mathscr{F}$ is torsion-free and for each open $U \subseteq X$ and each closed subset $Y\subseteq U$ of
codimension at least 2, $\mathscr{F}|_U \cong j_*(\mathscr{F}|_{U\setminus Y})$, where $j:U\setminus Y \to U$ is the
inclusion map.
\end{prop}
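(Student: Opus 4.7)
The plan is to reduce both implications of the equivalence to the Hartogs-type extension property for $\mathscr{O}_X$ on a normal variety, which is precisely Serre's condition $S_2$ for the structure sheaf. I would organize the argument around the observation that a reflexive sheaf can be rewritten as a $\mathscr{H}om$ sheaf into $\mathscr{O}_X$, and any such sheaf automatically inherits the extension property from the structure sheaf.

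For the forward direction $(\Rightarrow)$, torsion-freeness is almost formal: the kernel of the evaluation map $\mathscr{F}\to\mathscr{F}^{**}$ always contains the torsion subsheaf of $\mathscr{F}$ (a section annihilated by a nonzerodivisor dies in any dual), and the double dual is torsion-free, so if $\mathscr{F}\cong\mathscr{F}^{**}$ then $\mathscr{F}$ has no torsion. For the extension statement I would rewrite $\mathscr{F}\cong\mathscr{H}om(\mathscr{F}^{*},\mathscr{O}_X)$ and combine two ingredients. First, because $X$ is normal one has $\mathscr{O}_X|_U\cong j_{*}\mathscr{O}_{U\setminus Y}$ for every open $U$ and every closed $Y\subseteq U$ of codimension at least two (this is Hartogs, equivalent to normality being $R_1+S_2$). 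Second, there is the standard adjunction identity $\mathscr{H}om(\mathscr{G},j_{*}\mathscr{E})\cong j_{*}\mathscr{H}om(j^{*}\mathscr{G},\mathscr{E})$ for an open immersion $j$. Plugging the first into the second and using reflexivity on $U\setminus Y$ to identify $\mathscr{H}om(\mathscr{F}^{*}|_{U\setminus Y},\mathscr{O}_{U\setminus Y})$ with $\mathscr{F}|_{U\setminus Y}$ yields $\mathscr{F}|_U\cong j_{*}(\mathscr{F}|_{U\setminus Y})$.

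For the converse $(\Leftarrow)$, the strategy is to locate a large open set on which $\mathscr{F}$ and $\mathscr{F}^{**}$ already coincide, and then extend across a codimension-two locus using the hypothesis. Because $X$ is normal, its codimension-one local rings are discrete valuation rings, and finitely generated torsion-free modules over a DVR are free. So there exists an open $V\subseteq X$ with $\operatorname{codim}(X\setminus V)\geq 2$ on which $\mathscr{F}$ is locally free, and on $V$ the canonical map $\mathscr{F}|_V\to\mathscr{F}^{**}|_V$ is an isomorphism. Applying the extension hypothesis with $U=X$ and $Y=X\setminus V$ gives $\mathscr{F}\cong j_{*}(\mathscr{F}|_V)$, where $j\colon V\hookrightarrow X$. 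The bidual $\mathscr{F}^{**}$ is automatically reflexive (since $\mathscr{F}^{***}\cong\mathscr{F}^{*}$ by a formal computation), so the already-proved forward implication applies to $\mathscr{F}^{**}$ and yields $\mathscr{F}^{**}\cong j_{*}(\mathscr{F}^{**}|_V)\cong j_{*}(\mathscr{F}|_V)$. Comparing, $\mathscr{F}\cong\mathscr{F}^{**}$.

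The main obstacle is organizational rather than conceptual: the backward direction appeals to the forward direction applied to the bidual, so the two implications must be proved in this order, and one must verify that $\mathscr{F}^{**}$ is reflexive independently of the criterion itself. Once that ordering is respected, the entire proof is a book-keeping exercise on top of Hartogs for $\mathscr{O}_X$, and the only genuinely geometric input is the normality of $X$.
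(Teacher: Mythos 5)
The paper offers no proof of this statement at all: it is quoted directly from \cite[Prop.~1.6]{Har80}, so the only meaningful comparison is with Hartshorne's original argument. Your proof is correct in substance and packages the argument differently. Hartshorne characterizes reflexive sheaves by the local existence of an exact sequence $0 \to \mathscr{F} \to \mathscr{E} \to \mathscr{G} \to 0$ with $\mathscr{E}$ locally free and $\mathscr{G}$ torsion-free; he extends a section of $\mathscr{F}$ across $Y$ inside $\mathscr{E}$ (using $S_2$ for $\mathscr{O}_X$, i.e.\ Hartogs on a normal variety) and then kills the image section of $\mathscr{G}$ because it is supported on the codimension-two set $Y$ while $\mathscr{G}$ is torsion-free. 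You instead rewrite $\mathscr{F} \cong \mathscr{H}om(\mathscr{F}^{*},\mathscr{O}_X)$ and push Hartogs through the adjunction identity $\mathscr{H}om(\mathscr{G}, j_{*}\mathscr{E}) \cong j_{*}\mathscr{H}om(j^{*}\mathscr{G},\mathscr{E})$. Both routes reduce everything to $S_2$ for the structure sheaf; yours is more global and has the pleasant by-product that \emph{any} sheaf of the form $\mathscr{H}om(\mathscr{G},\mathscr{O}_X)$ on a normal variety satisfies the extension property, independently of whether one already knows it is reflexive. Your converse direction (local freeness in codimension one via DVRs, then pushing forward from the big open set $V$ and comparing with the bidual) is the standard one and works, including the implicit naturality of the unit maps needed to conclude that the canonical morphism $\mathscr{F}\to\mathscr{F}^{**}$, and not merely some abstract map, is an isomorphism.

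One step is stated too casually. The reflexivity of $\mathscr{F}^{**}$ does \emph{not} follow from ``a formal computation'': formally one only obtains that the evaluation map $\epsilon_{\mathscr{F}^{*}}\colon \mathscr{F}^{*}\to\mathscr{F}^{***}$ is a split monomorphism, with retraction $(\epsilon_{\mathscr{F}})^{*}$; that it is an isomorphism genuinely uses normality (or at least $S_2$-type hypotheses), and indeed duals fail to be reflexive over general rings. The cheapest repair is internal to your own argument: you never need $\mathscr{F}^{**}$ to be reflexive, only that it satisfies the extension property $\mathscr{F}^{**}\cong j_{*}(\mathscr{F}^{**}|_{V})$, and your forward-direction computation applies verbatim to any $\mathscr{H}om$-sheaf into $\mathscr{O}_X$, in particular to $\mathscr{F}^{**}=\mathscr{H}om(\mathscr{F}^{*},\mathscr{O}_X)$, with no reflexivity hypothesis on the source. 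Alternatively, cite \cite[Cor.~1.2]{Har80}, which is exactly the statement that duals of coherent sheaves are reflexive. With that substitution the proof is complete and the ordering concern you raise (proving the forward implication first) disappears, since the extension property for $\mathscr{H}om$-sheaves is what is really being used, not the criterion itself.
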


As a corollary of this proposition, we prove the following lemma.

\begin{lemma}
\label{inj-mmp}
If $\phi:X \dashrightarrow X'$ is a birational map between normal projective varieties such that $\phi^{-1}$ does not contract any divisor, then we have a natural injection $H^0(X,(\Omega_X^1)^{[\otimes m]}) \hookrightarrow H^0(X',(\Omega_{X'}^1)^{[\otimes m]})$ for any integer $m>0$.
\end{lemma}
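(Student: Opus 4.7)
The plan is to realize the claimed injection concretely as pullback along the inverse birational map: I will construct a large open $V' \subseteq X'$ (with $X' \setminus V'$ of codimension at least $2$) on which $\phi^{-1}$ restricts to a morphism landing in the smooth locus of $X$, pull back a given reflexive pluri-form on $X$ via this morphism to obtain a genuine pluri-form on $V'$, and then extend to all of $X'$ using reflexivity (Proposition \ref{refle-cod-2}).

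First, since $X'$ is normal and $X$ is projective, the indeterminacy locus of the birational map $\phi^{-1}: X' \dashrightarrow X$ has codimension at least $2$ in $X'$; let $U' \subseteq X'$ denote its complement and set $\psi := \phi^{-1}|_{U'}: U' \to X$. The substantive step -- and the only place where the hypothesis enters -- is the codimension estimate that $\psi^{-1}(\textup{Sing}(X))$ has codimension at least $2$ in $X'$. Indeed, suppose a prime divisor $D \subseteq X'$ had its intersection with $U'$ contained in $\psi^{-1}(\textup{Sing}(X))$; then $\phi^{-1}(D \cap U') = \psi(D \cap U')$ would lie in the codimension-$\geq 2$ subset $\textup{Sing}(X)$ and therefore have dimension strictly less than $\dim D$, so $\phi^{-1}$ would contract $D$, contradicting the assumption. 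Combining this with the fact that $\textup{Sing}(X')$ also has codimension at least $2$, I set
\[
V' := \bigl(U' \cap X'_{\textup{reg}}\bigr) \setminus \psi^{-1}(\textup{Sing}(X)),
\]
an open subset of $X'$ with complement of codimension at least $2$ on which $\psi$ restricts to a morphism $V' \to X_{\textup{reg}}$ between smooth varieties.

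Given $\omega \in H^0(X, (\Omega^1_X)^{[\otimes m]})$, its restriction $\omega|_{X_{\textup{reg}}}$ is a section of the locally free sheaf $(\Omega^1_{X_{\textup{reg}}})^{\otimes m}$, so I can form $\psi^*\omega \in H^0(V', (\Omega^1_{V'})^{\otimes m})$. Since $V'$ lies in the smooth locus of $X'$, this group coincides with $H^0\bigl(V', (\Omega^1_{X'})^{[\otimes m]}|_{V'}\bigr)$, and Proposition \ref{refle-cod-2} extends $\psi^*\omega$ uniquely to a global section $\omega' \in H^0(X', (\Omega^1_{X'})^{[\otimes m]})$. The assignment $\omega \mapsto \omega'$ is the desired homomorphism. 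Injectivity is formal: $\psi$ is birational, hence an isomorphism on a dense open of $V'$ on which $\psi^*$ is an isomorphism of sheaves, so $\psi^*\omega = 0$ forces $\omega$ to vanish on a dense open of $X_{\textup{reg}}$, and hence to be zero by torsion-freeness. I expect the only real obstacle is keeping this codimension bookkeeping straight; every other step is formal reflexivity manipulation.
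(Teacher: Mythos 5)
Your proof is correct, and it follows the paper's overall skeleton --- produce an open subset of $X'$ whose complement has codimension at least $2$, on which $\phi^{-1}$ behaves well, transport the pluri-form there, and extend across the codimension-$2$ locus by reflexivity (Proposition \ref{refle-cod-2}) --- but the middle step is handled in a genuinely different way. The paper asserts that, because $\phi^{-1}$ contracts no divisor, it restricts to an \emph{isomorphism} from a big open subset $W' \subseteq X'$ onto an open subset $W \subseteq X$; the injection is then just restriction $H^0(X,\cdot) \hookrightarrow H^0(W,\cdot)$ composed with the isomorphisms $H^0(W,\cdot) \cong H^0(W',\cdot) \cong H^0(X',\cdot)$, so injectivity comes for free. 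That isomorphism claim is stated without proof, and justifying it takes more than bookkeeping: one needs Zariski's main theorem (non-contraction rules out divisorial loci with positive-dimensional fibers, and a quasi-finite birational morphism to a normal variety is an open immersion). You sidestep this entirely: you only need $\phi^{-1}$ to restrict to a \emph{morphism} $V' \to X_{\textup{reg}}$ on a big open $V'$, which you obtain from the standard fact that a rational map from a normal variety to a projective variety is defined in codimension one, plus your dimension count showing that $\psi^{-1}(\textup{Sing}(X))$ has codimension at least $2$ --- and that count is exactly where the non-contraction hypothesis enters for you, used correctly (a divisor mapping into $\textup{Sing}(X)$ would have image of strictly smaller dimension, hence be contracted). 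The price of working with a mere morphism is that injectivity of pullback is no longer formal, and you pay it correctly with the generic-isomorphism plus torsion-freeness argument. Net comparison: your route is more self-contained (no hidden appeal to Zariski's main theorem) and pins down the map concretely as a pullback, at the cost of one extra, easy, injectivity step.
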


\begin{proof}
Since the birational map $\phi^{-1}: X' \dashrightarrow X$ does not contract any divisor, it induces an isomorphism from an open subset $W'$ of $X'$ such that codim$X'\backslash W \geqslant 2$  onto an open subset $W$ of $X$. By Proposition \ref{refle-cod-2}, we have $H^0(X',(\Omega_{X'}^1)^{[\otimes m]}) \cong H^0(W',(\Omega_{W'}^1)^{[\otimes m]})$ for any $m>0$. Moreover, since $W$ is an open subset of $X$, we obtain $H^0(X,(\Omega_{X}^1)^{[\otimes m]}) \hookrightarrow H^0(W,(\Omega_{W}^1)^{[\otimes m]}) \cong H^0(W',(\Omega_{W'}^1)^{[\otimes m]}) \cong H^0(X',(\Omega_{X'}^1)^{[\otimes m]})$ for all $m>0$.
\end{proof}

The proof of the following lemma is left to the reader.	

\begin{lemma}
\label{long-filtration}
Let $0\to \mathscr{A} \to \mathscr{B} \to \mathscr{C} \to 0$ be an exact sequence of locally free sheaves on a variety $X$. Then for any $m> 0$, we have a filtration of locally free sheaves $\mathscr{B}^{\otimes m} = \mathscr{R}_0 \supseteq \cdots \supseteq \mathscr{R}_{m+1}=0$ such that $\mathscr{R}_{i}/\mathscr{R}_{i+1}$ is isomorphic to the direct sum of copies of $\mathscr{A}^{\otimes i} \otimes \mathscr{C}^{\otimes m-i}$ for all $0 \leq i \leq m$. 
\end{lemma}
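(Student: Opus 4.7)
The plan is to build the filtration from the canonical inclusion $\mathscr{A}\hookrightarrow\mathscr{B}$ applied in various tensor slots, and to identify the graded pieces by passing to a local splitting of the short exact sequence. For each subset $J\subseteq\{1,\dots,m\}$, let $\mathscr{T}^J := T_1\otimes\cdots\otimes T_m$ with $T_j = \mathscr{A}$ if $j\in J$ and $T_j=\mathscr{B}$ otherwise. The inclusion $\mathscr{A}\hookrightarrow\mathscr{B}$ yields an injection $\iota_J:\mathscr{T}^J\hookrightarrow\mathscr{B}^{\otimes m}$ (injectivity being preserved under tensor of locally free sheaves). I would then define
\[
\mathscr{R}_i := \sum_{|J|\geq i}\iota_J(\mathscr{T}^J)\ \subseteq\ \mathscr{B}^{\otimes m}.
\]
By construction $\mathscr{R}_0 = \mathscr{B}^{\otimes m}$, $\mathscr{R}_{m+1}=0$, and $\mathscr{R}_\bullet$ is decreasing since enlarging $i$ only removes terms from the sum.

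Next I would compute the graded pieces by passing to an open cover on which the extension splits as $\mathscr{B}\cong\mathscr{A}\oplus\mathscr{C}$; such a cover exists because all three sheaves are locally free. Under such a splitting, distributivity gives $\mathscr{B}^{\otimes m}\cong\bigoplus_{J\subseteq\{1,\dots,m\}}\mathscr{A}^{\otimes|J|}\otimes\mathscr{C}^{\otimes m-|J|}$, and $\mathscr{R}_i$ is identified with the subsum over $|J|\geq i$. Hence locally $\mathscr{R}_i/\mathscr{R}_{i+1}\cong\bigoplus_{|J|=i}\mathscr{A}^{\otimes i}\otimes\mathscr{C}^{\otimes m-i}$, a direct sum of $\binom{m}{i}$ copies.

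I would then verify that this graded-piece decomposition is intrinsic, so that the local isomorphisms glue. For each $J$ with $|J|=i$, the composition $\mathscr{T}^J\to\mathscr{R}_i\to\mathscr{R}_i/\mathscr{R}_{i+1}$ factors canonically through $\bigotimes_{j}U_j^J$, where $U_j^J=\mathscr{A}$ for $j\in J$ and $U_j^J=\mathscr{C}$ for $j\notin J$, because any contribution in which a $\mathscr{B}$-factor at a position $j\notin J$ already lies in the subsheaf $\mathscr{A}$ is absorbed by $\mathscr{R}_{i+1}$. Summing these canonical factorisations over all $J$ with $|J|=i$ produces the asserted direct sum decomposition globally.

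The main technical point is to check that the images $\iota_J(\mathscr{T}^J)$ with $|J|=i$ become independent in $\mathscr{R}_i/\mathscr{R}_{i+1}$; this is automatic in the split local model, and the intrinsic description above is what lets the local identifications patch. An alternative route that avoids the combinatorial bookkeeping is induction on $m$: tensor the given short exact sequence with $\mathscr{B}^{\otimes m-1}$, apply the inductive filtration to the two outer terms, and splice the two resulting filtrations on $\mathscr{B}^{\otimes m}$, using the Pascal identity $\binom{m-1}{i-1}+\binom{m-1}{i}=\binom{m}{i}$ to match multiplicities on the graded pieces.
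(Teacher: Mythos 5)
The paper gives you nothing to compare against here: it states explicitly that the proof of this lemma is left to the reader. Judged on its own, your main construction is correct, and it is the standard argument. Defining $\mathscr{R}_i=\sum_{|J|\geqslant i}\iota_J(\mathscr{T}^J)$ (note only $|J|=i$ is needed, since $\iota_{J'}(\mathscr{T}^{J'})\subseteq\iota_J(\mathscr{T}^J)$ whenever $J\subseteq J'$), identifying everything on a cover where the surjection $\mathscr{B}\to\mathscr{C}$ splits, and then checking that the identification of the graded pieces is canonical are exactly the right steps. The key point, which you state correctly, is that the kernel of $\mathscr{T}^J\to\bigotimes_j U_j^J$ equals $\sum_{j\notin J}\iota(\mathscr{T}^{J\cup\{j\}})$ (itself a local computation), hence lands in $\mathscr{R}_{i+1}$; so each $\mathscr{T}^J$ with $|J|=i$ induces a map $\bigotimes_j U_j^J\to\mathscr{R}_i/\mathscr{R}_{i+1}$, and the resulting map $\bigoplus_{|J|=i}\bigotimes_j U_j^J\to\mathscr{R}_i/\mathscr{R}_{i+1}$ is surjective by construction and an isomorphism because it is one in the split local model.

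One caveat: the alternative inductive route you sketch at the end is not complete as stated. Writing $\mathscr{S}_0\supseteq\cdots\supseteq\mathscr{S}_m=0$ for the inductive filtration of $\mathscr{B}^{\otimes (m-1)}$, the natural interleaved filtration $\mathscr{R}_i=\mathscr{A}\otimes\mathscr{S}_{i-1}+\mathscr{B}\otimes\mathscr{S}_i$ yields only a short exact sequence
\[
0\to\mathscr{A}\otimes(\mathscr{S}_{i-1}/\mathscr{S}_i)\to\mathscr{R}_i/\mathscr{R}_{i+1}\to\mathscr{C}\otimes(\mathscr{S}_i/\mathscr{S}_{i+1})\to 0,
\]
whereas the lemma demands that the graded piece be a \emph{direct sum} of copies of $\mathscr{A}^{\otimes i}\otimes\mathscr{C}^{\otimes(m-i)}$, not merely an iterated extension of such; the Pascal identity matches the number of copies but does not produce the splitting. (The naive concatenation of the two filtrations fares no better: it has $2m$ steps, with each value of $i$ occurring in two non-adjacent positions.) The splitting does hold, but proving it amounts to redoing the subset bookkeeping of your main argument. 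Since that main argument is self-contained, this only means the closing remark should be dropped or flagged as a sketch; it does not affect the correctness of your proposal.
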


From this lemma, we can deduce the  following lemma which is very important in the paper.

\begin{lemma}
\label{exact-section}
Let $\mathscr{A} \to \mathscr{B} \to \mathscr{C}$ be a complex of coherent sheaves on a normal variety $X$. Assume that there is an open subset $W$ of $X$ with $\mathrm{codim}X\backslash W \geqslant 2$  such that we have an exact sequence of locally free sheaves $0\to \mathscr{A}|_W \to \mathscr{B}|_W \to \mathscr{C}|_W \to 0$ on $W$. If $H^0(X, \sA^{\otimes r} [\otimes] \sC^{\otimes t}) = \{0\}$ for all $t>0$ and $r \geqslant 0$, then   $H^0(X,\sB^{[\otimes m]}) \cong H^0(X,\sA^{[\otimes m]})$ for all $m >0$.
\end{lemma}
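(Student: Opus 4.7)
The plan is to transfer the entire computation onto the open subset $W$, where $\sA$, $\sB$, and $\sC$ are locally free and fit into a genuine short exact sequence, and then to exploit the filtration furnished by Lemma \ref{long-filtration}. The bridging fact is that for any reflexive coherent sheaf $\sF$ on $X$, Proposition \ref{refle-cod-2} yields $H^0(X,\sF) = H^0(W,\sF|_W)$ because $\codim(X\setminus W)\geq 2$. Since $\sB^{\otimes m}|_W$, $\sA^{\otimes m}|_W$, and each $\sA^{\otimes r}\otimes \sC^{\otimes t}|_W$ are locally free, hence reflexive on $W$, this identification gives
\[
H^0(X,\sB^{[\otimes m]})=H^0(W,\sB^{\otimes m}|_W),\quad H^0(X,\sA^{[\otimes m]})=H^0(W,\sA^{\otimes m}|_W),
\]
and it translates the hypothesis into $H^0(W,\sA^{\otimes r}\otimes\sC^{\otimes t}|_W)=0$ for every $t>0$ and $r\geq 0$.

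Next I would apply Lemma \ref{long-filtration} to the locally free exact sequence $0\to\sA|_W\to\sB|_W\to\sC|_W\to 0$, producing a filtration $\sB^{\otimes m}|_W=\sR_0\supseteq\sR_1\supseteq\cdots\supseteq\sR_{m+1}=0$ whose successive quotients $\sR_i/\sR_{i+1}$ are direct sums of copies of $\sA^{\otimes i}|_W\otimes\sC^{\otimes m-i}|_W$, with the deepest piece $\sR_m$ being a single copy of $\sA^{\otimes m}|_W$. For every $0\leq i<m$ the exponent $m-i$ is strictly positive, so the vanishing hypothesis gives $H^0(W,\sR_i/\sR_{i+1})=0$, and the left exact sequence
\[
0\to H^0(W,\sR_{i+1})\to H^0(W,\sR_i)\to H^0(W,\sR_i/\sR_{i+1})
\]
forces the inclusion $H^0(W,\sR_{i+1})\hookrightarrow H^0(W,\sR_i)$ to be an isomorphism.

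Chaining these $m$ isomorphisms yields $H^0(W,\sB^{\otimes m}|_W)\cong H^0(W,\sR_m)=H^0(W,\sA^{\otimes m}|_W)$, and combining with the identifications from the first paragraph gives the desired $H^0(X,\sB^{[\otimes m]})\cong H^0(X,\sA^{[\otimes m]})$. I anticipate no serious obstacle; the only mild subtlety is to confirm that the bottom graded piece of Lemma \ref{long-filtration} is exactly one copy of $\sA^{\otimes m}$ (corresponding to the unique ``all-$\sA$'' summand in the binomial-type decomposition of $\sB^{\otimes m}$), which one checks by inspecting the construction of that filtration.
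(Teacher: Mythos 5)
Your proof is correct and follows essentially the same route as the paper's: restrict everything to $W$ via Proposition \ref{refle-cod-2} (where reflexive hulls coincide with the honest tensor powers), apply the filtration of Lemma \ref{long-filtration} to $\sB^{\otimes m}|_W$, and collapse it step by step using the vanishing hypothesis. Your explicit remark that the bottom graded piece $\sR_m$ is a single copy of $\sA^{\otimes m}|_W$ only makes precise a point the paper's proof leaves implicit.
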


\begin{proof}
On $W$, we have $H^0(W, \sA|_W^{\otimes r} \otimes \sC|_W^{\otimes t}) = \{0\}$ for all $t>0$ and $r \geqslant 0$ by Proposition \ref{refle-cod-2}. Fix $m > 0$, then we have a filtration $\mathscr{B}|_W^{\otimes m} = \mathscr{R}_0 \supseteq \cdots \supseteq \mathscr{R}_{m+1}=0$ such that $\mathscr{R}_{i}/\mathscr{R}_{i+1}$ is isomorphic to the direct sum of copies of $\mathscr{A}|_W^{\otimes i} \otimes \mathscr{C}|_W^{\otimes m-i}$ for all $0 \leq i \leq m$. Since $H^0(X, \sA|_W^{\otimes r} \otimes \sC|_W^{\otimes t}) = \{0\}$ for all $t>0$ and $r \geqslant 0$, we have $H^0(W, \sR_i) \cong H^0(W, \sR_{i+1})$ for $0\leqslant i \leqslant m-1$. Thus $H^0(W,\sB|_W^{\otimes m}/\sA|_W^{\otimes m}) =\{0\}$ and $H^0(W,B|_W^{\otimes m}) \cong H^0(W,A|_W^{\otimes m})$. By Proposition \ref{refle-cod-2}, we have $H^0(X,\sB^{[\otimes m]}) \cong H^0(X,\sA^{[\otimes m]})$.
\end{proof}

One of the applications of the lemma above is the following, which gives a relation between pluri-forms and fibrations.

\begin{lemma}
\label{fibration-section}
Let $p:X\to Z$ be an equidimensional morphism between normal varieties. Assume that general fibers of $p$ do not carry any non-zero pluri-form. Then $H^0(X,(\Omega_X^1)^{[\otimes m]}) \cong H^0(X, ((p^*\Omega_Z^1)^{sat})^{[\otimes m]})$ for $m>0$, where $(p^*\Omega_Z^1)^{sat}$ is the saturation of the image of $p^*\Omega_Z^1$ in $\Omega_X^{[1]}$.
\end{lemma}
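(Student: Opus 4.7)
The plan is to apply Lemma \ref{exact-section} to the short exact sequence
\[
0 \longrightarrow \mathscr{A} \longrightarrow \mathscr{B} \longrightarrow \mathscr{C} \longrightarrow 0
\]
with $\mathscr{A} = (p^*\Omega_Z^1)^{sat}$, $\mathscr{B} = \Omega_X^{[1]}$ and $\mathscr{C} = \mathscr{B}/\mathscr{A}$. The quotient $\mathscr{C}$ is torsion-free by construction of the saturation, and $\mathscr{B}^{[\otimes m]} = (\Omega_X^1)^{[\otimes m]}$ because the two reflexive sheaves agree on the smooth locus. With these identifications the conclusion of Lemma \ref{exact-section} is precisely the asserted isomorphism.

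First I would produce an open $W \subset X$ with $\codim(X \setminus W) \ge 2$ on which $\mathscr{A}|_W$, $\mathscr{B}|_W$ and $\mathscr{C}|_W$ are simultaneously locally free. Start from $X_{sm} \cap p^{-1}(Z_{sm})$, whose complement has codimension $\ge 2$ in $X$ by normality together with the equidimensionality of $p$, and then delete the closed subsets of codimension $\ge 2$ where the torsion-free sheaves $\mathscr{A}$ and $\mathscr{C}$ fail to be locally free (on a smooth variety the non-locally-free locus of a torsion-free sheaf always has codimension $\ge 2$). The global exact sequence above restricts to a short exact sequence of locally free sheaves on $W$.

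Next I would verify the vanishing hypothesis $H^0(X, \mathscr{A}^{\otimes r}[\otimes]\mathscr{C}^{\otimes t}) = 0$ for $r \ge 0$ and $t > 0$. Since the sheaf is reflexive, Proposition \ref{refle-cod-2} identifies this with $H^0(W, \mathscr{A}^{\otimes r} \otimes \mathscr{C}^{\otimes t}|_W)$, which in turn injects into $H^0(W_0, \mathscr{A}^{\otimes r} \otimes \mathscr{C}^{\otimes t}|_{W_0})$ for the dense open $W_0 \subset W$ on which $p$ is smooth (available by generic smoothness in characteristic zero). On $W_0$ the relative cotangent sequence has locally free quotient, so $p^*\Omega_Z^1|_{W_0}$ is already saturated in $\Omega_{W_0}^1$; hence $\mathscr{A}|_{W_0} = p^*\Omega_Z^1|_{W_0}$ and $\mathscr{C}|_{W_0} = \Omega_{W_0/Z}^1$. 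For a general $z \in Z$, a dimension count using equidimensionality of $p$ shows that $F_z$ is smooth, is entirely contained in the smooth locus of $p$, and $F_z \setminus W_0$ has codimension $\ge 2$ in $F_z$; moreover the restriction of $\mathscr{A}^{\otimes r} \otimes \mathscr{C}^{\otimes t}$ to $F_z \cap W_0$ is a direct sum of $N = (\dim Z)^r$ copies of $(\Omega_{F_z}^1)^{\otimes t}$. Any section of this locally free sheaf on $F_z \cap W_0$ extends uniquely across its codimension $\ge 2$ complement to a section on the smooth $F_z$, and vanishes there by the no-pluri-forms hypothesis. A section on $W_0$ that vanishes on every general fiber is zero by local freeness, which gives the required vanishing.

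Lemma \ref{exact-section} now yields $H^0(X, (\Omega_X^1)^{[\otimes m]}) \cong H^0(X, ((p^*\Omega_Z^1)^{sat})^{[\otimes m]})$, which is the claim. The main technical obstacle I foresee is the combined dimension count in the previous paragraph: one must simultaneously arrange, for a general $z \in Z$, that $F_z$ meets each of the various codim $\ge 2$ bad loci (the singular locus of $X$, the preimage of the singular locus of $Z$, and the non-locally-free loci of $\mathscr{A}$ and $\mathscr{C}$) in a codim $\ge 2$ subset of $F_z$. This is precisely where equidimensionality of $p$ is essentially used, via the standard estimate that a morphism from a codim $\ge 2$ subscheme cannot have generic fiber of top dimension.
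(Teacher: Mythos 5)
Your proposal is correct and follows essentially the same route as the paper's own proof: the same exact sequence $0 \to (p^*\Omega_Z^1)^{sat} \to \Omega_X^{[1]} \to \sC \to 0$ with torsion-free quotient, the same passage to an open subset with codimension-two complement on which all three sheaves are locally free, the same verification of the vanishing hypothesis by restricting sections to general fibers, and the same final appeal to Lemma \ref{exact-section}. The only overstatement is your claim that a general fiber $F_z$ is smooth and entirely contained in the smooth locus of $p$ --- in general $F_z$ may meet $\mathrm{Sing}(X)$, so one only gets (and, as in the paper, only needs) that $F_z$ is normal and meets the bad locus in codimension at least two, after which a section on $F_z \cap W_0$ extends to a section of the reflexive hull $(\Omega_{F_z}^1)^{[\otimes t]}$ on $F_z$ and vanishes by the hypothesis on general fibers.
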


\begin{proof}
We have an exact sequence of coherent sheaves $0 \to \sF \to \Omega_X^{[1]} \to \sG \to 0$ on $X$, where  $\sF = (p^*\Omega_Z^1)^{sat}$ and $\sG$ is a torsion free sheaf.  Let $V$ be the smooth locus of $Z$  and let $W=p^{-1}(V)$. If $W_{ns}$ is the largest open subset of $W$ on which $\sF$, $\Omega_X^1$ and $\sG$ are locally free, then codim$X\backslash W_{ns} \geqslant 2$ and we have an exact sequence $0\to \sF|_{W_{ns}} \to \Omega_{W_{ns}}^{1} \to \sG|_{W_{ns}} \to 0$ of locally free sheaves on $W_{ns}$.

If $F$ is a general fiber of $p|_{W_{ns}}$, then $\sF|_F$ is the direct sum of $\sO_F$ and $\sG|_F$ is isomorphic to $\Omega_F^1$. Since general fibers of $p$ do not carry any non-zero pluri-form, neither does $F$ by Proposition \ref{refle-cod-2}. Hence we have $H^0(W_{ns},\sF|_{W_{ns}}^{\otimes r} \otimes \sG|_{W_{ns}}^{\otimes t}) = \{0\}$ for all $t>0$ and $r\geqslant 0$. By Lemma \ref{exact-section}, we have $H^0(W_{ns},\sF|_{W_{ns}}^{\otimes m}) \cong H^0(W_{ns},(\Omega_{W_{ns}}^1)^{\otimes m})$ for all $m>0$. Since codim$X\backslash W_{ns} \geqslant 2$, by Proposition \ref{refle-cod-2}, we have $H^0(X,(\Omega_X^1)^{[\otimes m]}) \cong H^0(X, \sF^{[\otimes m]})$ for all $m>0$.
\end{proof}

If $X$ is a normal variety and $E$ is a Weil divisor on $X$, then $E$ is a Cartier divisor on the smooth locus $X_{ns}$ of $X$ and it induces an invertible sheaf $\sO_{X_{ns}}(E)$ on $X_{ns}$. Let $\sO_X(E)$ be the push-forward of  $\sO_{X_{ns}}(E)$ on $X$. Then by Proposition \ref{refle-cod-2}, $\sO_X(E)$ is a reflexive sheaf on $X$ since $X$ is smooth in codimension $1$. Conversely, if $\sF$ is a reflexive sheaf of rank $1$ on $X$, it is an invertible sheaf in codimension $1$. There is a Weil divisor $D$ on $X$ such that $\sF \cong \sO_X(D)$. If $X$ is $\mathbb{Q}$-factorial, then for any $1$-cycle $\alpha$ on $X$, we can define the intersection number $\sF \cdot \alpha = D \cdot \alpha = \frac{1}{k}(kD) \cdot \alpha$, where $k$ is a positive integer such that $kD$ is Cartier. This expression is independent to the choice of $D$ (see \cite[Appendix to \S 1]{Reid79} for more details).

\subsection{Minimal model program}
\label{Minimal model program}

We will recall some basic definition and  properties of the minimal model program (MMP for short).  A pair $(X,\D)$ consists of a normal quasi-projective variety $X$ and a boundary $\D$, \textit{i.e.} a $\mathbb{Q}$-Weil divisor divisor $\D=\Sigma_{j=1}^kd_j D_j$ on $X$ such that the $D_j$'s are pairwise distinct prime divisors and all $d_j$ are contained in $[0,1]$. Recall the definition of singularities of pairs.

\begin{defn}[{\cite[Def. 2.34]{KM98}}]
\label{def-sing-pair}
Let $(X,\D)$ be a pair with $\D=\Sigma_{j=1}^kd_j D_j$. Let $r:\widetilde{X} \to X$ be a log resolution of singularities of $(X, \D)$. Assume that $K_X+\D$ is $\mathbb{Q}$-Cartier, then we can write $K_{\widetilde{X}}+r_*^{-1}\D = r^*(K_X+\D)+\Sigma a_iE_i$, where $E_i$ are $r$-exceptional divisors. We call $a_i$ the discrepancy of $E_i$ with respect to $(X,\D)$. Then $(X,\D)$ is terminal (resp. canonical, klt) if $a_i>0$ (resp. $a_i\geqslant 0$, $a_i> -1$ and $d_j<1$ for all $j$) for all $i$.
\end{defn}

\begin{rem}
\label{rem-sing-codim-2}
If $X$ is terminal, then it is smooth in codimension $2$. If $X$ is canonical, then $K_X$ is Cartier in codimension $2$ (see \cite[Cor. 5.18]{KM98}). 
\end{rem}

For a klt pair $(X,\D)$ such that $X$ is $\mathbb{Q}$-factorial, we can run a $(K_X+\D)$-MMP and obtain a sequence of rational maps $X=X_0 \dashrightarrow X_1 \dashrightarrow \cdots$ such that $(X_i,\D_i)$ is a klt pair and $X_i$ is $\mathbb{Q}$-factorial, where $\D_i$ is the strict transform of $\D$. Every elementary step in a MMP is either a divisorial contraction which is a morphism contracting an irreducible divisor or a flip which is an isomorphism in codimension $1$. More generally, if $h:X \to T$ is a morphism, we can run a $h$-relative MMP such that for all $i$, there is a morphism $h_i:X_i \to T$ with $h_i \circ \phi_i = h$, where $\phi_i$ is the birational map $X \dashrightarrow X_i$. For more details on MMP, we refer to \cite[\S 3]{KM98}. One of the most important problems in MMP is that if the sequence of rational maps above terminates. If it does, we will get the result of this $h$-relative MMP $X \dashrightarrow X'$. Let $\D'$ be the direct image of $\D$. Then either  $K_{X'}+\D'$ is relative nef over $T$ or we have a $(K_{X'}+\D')$-Mori fibration $g:X' \to Z$ over $T$ such that $-(K_{X'}+\D')$ is $g$-ample. Thanks to \cite[Thm. 1]{Kaw92}, we know that any MMP for a klt pair $(X,\D)$ such that dim$X\leqslant 3$ terminates.

\begin{lemma}
\label{iso-can-center}
Let $\phi: X \dashrightarrow X'$ be an extremal divisorial contraction or a flip such that both $X$ and $X'$ are projective. Assume that $X$ has  $\mathbb{Q}$-factorial canonical singularities. If $Y$ is an irreducible closed subvariety in $X'$ such that it is the center of a divisor $E$ over $X'$ which has discrepancy $0$, then $\phi^{-1}$ is a morphism around the generic point of $Y$.
\end{lemma}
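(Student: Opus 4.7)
The plan is to argue by the monotonicity of discrepancies along a step of the MMP. Because $\phi$ is $K_X$-negative, for every divisor $F$ over $X$ (equivalently, over $X'$) I expect $a(F,X) \le a(F,X')$, with equality if and only if the center of $F$ on $X$ lies outside $\Exc(\phi)$. Since $X$ has canonical singularities, $a(E,X) \ge 0$; combined with the hypothesis $a(E,X') = 0$ this forces $a(E,X) = a(E,X') = 0$. The equality clause then puts the center of $E$ on $X$ outside $\Exc(\phi)$, and since $\phi$ is an isomorphism on that locus, its image under $\phi$ is exactly $Y$, whose generic point therefore lies in $X' \setminus \Exc(\phi^{-1})$, i.e.\ in the locus where $\phi^{-1}$ is regular.

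To make the equality clause precise in the divisorial case, I would pass to a smooth common model $p : W \to X$, $q = \phi \circ p : W \to X'$, arranged so that $E$ appears as a prime divisor on $W$. Writing $\phi^* K_{X'} = K_X + aD$ with $D$ the contracted divisor and $a > 0$ (positivity uses $-K_X \cdot C > 0$ and $-D \cdot C > 0$ for any contracted curve $C$), comparison of the two expressions
\[
K_W \;=\; p^*K_X + \sum_i a(F_i,X)\,F_i \;=\; q^*K_{X'} + \sum_i a(F_i,X')\,F_i
\]
gives $a(F_i,X) - a(F_i,X') = -a \cdot \textup{ord}_{F_i}(p^*D)$. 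Taking $F_i = E$, the hypotheses and $a > 0$ force $\textup{ord}_E(p^*D) = 0$, i.e.\ $p(E) \not\subset D$. Hence $\phi$ is an isomorphism at the generic point of the closed subvariety $p(E) \subseteq X$, and $Y = \phi(p(E))$ has generic point in $X' \setminus \phi(D)$, where $\phi^{-1}$ is regular.

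For the flip case, if $\codim_{X'} Y = 1$ the conclusion is automatic because $\Exc(\phi^{-1})$ has codimension at least two in $X'$. Otherwise I would invoke the analogous sharp monotonicity for a flip (see \cite[Lem.~3.38]{KM98}): for a step of a $K_X$-MMP that is a flip, $a(E,X) < a(E,X')$ strictly whenever the center of $E$ on $X$ meets the flipping locus; equivalently, the equality $a(E,X) = a(E,X') = 0$ obtained above places the generic point of $Y$ outside $\Exc(\phi^{-1})$. The main obstacle is precisely this flip case: most references record only non-strict monotonicity, so one needs the ``equality iff center outside the exceptional locus'' version, which can be extracted from the proof of the discrepancy comparison by identifying $q^*K_{X'} - p^*K_X$ as an effective $\mathbb{Q}$-divisor whose support is exactly the locus of divisors on $W$ mapping into the flipping locus. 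Once this is in hand, both cases collapse to the one-line coefficient computation above.
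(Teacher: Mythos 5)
Your proof is correct and is essentially the paper's own argument: both come down to the monotonicity-with-strictness of discrepancies along a $K_X$-negative divisorial contraction or flip (\cite[Lem.~3.38]{KM98}) played off against $a(E,X)\geqslant 0$ (canonicity of $X$) and $a(E,X')=0$ --- the paper runs this as a two-line proof by contradiction, while you run the contrapositive and additionally reprove the divisorial case by an explicit coefficient computation. Two small repairs: the canonical bundle relation should read $K_X=\phi^*K_{X'}+aD$ with $a>0$ (equivalently, the effective divisor on the common resolution is $p^*K_X-q^*K_{X'}$, not $q^*K_{X'}-p^*K_X$; your positivity argument and your displayed formula $a(F_i,X)-a(F_i,X')=-a\cdot\textup{ord}_{F_i}(p^*D)$ already correspond to this corrected convention), and the ``equality iff the center lies outside the exceptional locus'' refinement that you worry is absent from the literature is in fact part of \cite[Lem.~3.38]{KM98} (with strictness requiring the center to be \emph{contained in}, not merely meet, the flipping locus), so the flip case is a direct citation exactly as in the paper.
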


\begin{proof}
Assume the opposite. Then the discrepancy of $E$ in $X$ is strictly smaller than the one in $X'$ (see \cite[Lem. 3.38]{KM98}). This implies that $X$ does not have canonical singularities along the center of $E$  in $X$ which is a contradiction.
\end{proof}

\begin{prop}
\label{iso-graph}
Let $X$ be a projective threefold which has at most canonical singularities. Let $X'$ be the result of a MMP for $X$ and denote the birational map $X \dashrightarrow X'$ by $f_X$. Let $\Gamma$ be the normalisation of the graph of $f_X$. Then there is a natural isomorphism $H^0(X,(\Omega_X^1)^{[\otimes m]}) \cong H^0(\Gamma,(\Omega_{\Gamma}^1)^{[\otimes m]})$ for all $m>0$.
\end{prop}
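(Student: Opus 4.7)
The plan is to construct natural maps
$$
\alpha : H^0(X,(\Omega_X^1)^{[\otimes m]}) \longrightarrow H^0(\Gamma,(\Omega_\Gamma^1)^{[\otimes m]})
\qquad\text{and}\qquad
\beta : H^0(\Gamma,(\Omega_\Gamma^1)^{[\otimes m]}) \longrightarrow H^0(X,(\Omega_X^1)^{[\otimes m]})
$$
and to verify that they are mutually inverse. The map $\beta$ is produced by Lemma \ref{inj-mmp} applied to $\pi:\Gamma\to X$ viewed as a birational map: its inverse $\pi^{-1}:X\dashrightarrow\Gamma$ cannot contract any divisor, since for any prime divisor $D\subset X$ its birational transform in $\Gamma$ has the same dimension. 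Concretely, $\beta$ restricts a section on $\Gamma$ to the common isomorphism locus $\pi^{-1}(U)\cong U$ (with $\codim(X\setminus U)\ge 2$) and then extends the resulting form to $X$ by reflexivity of $(\Omega_X^1)^{[\otimes m]}$.

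For $\alpha$, the natural candidate is the pull-back $\pi^*\eta$ of a pluri-form $\eta$ on $X$. The delicate point is to show that $\pi^*\eta$, defined a priori only on $\pi^{-1}(U)$, extends to a genuine section of $(\Omega_\Gamma^1)^{[\otimes m]}$ on all of $\Gamma$, \ie has no poles along any $\pi$-exceptional divisor. To establish this, I would decompose $f_X$ into elementary MMP steps $X=X_0\dashrightarrow X_1\dashrightarrow\cdots\dashrightarrow X_n=X'$ and argue by induction on $n$. A divisorial contraction is an honest morphism, so its graph is the source and nothing has to be checked. For a flip $\phi:X_i\dashrightarrow X_{i+1}$ with normalized graph $W$ and projections $p:W\to X_i$, $q:W\to X_{i+1}$, the $p$-exceptional divisors have centres contained in the flipping curve, which sits in the singular locus of $X_i$. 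Canonicity of $X_i$ together with Lemma \ref{iso-can-center} gives enough control on the discrepancies of these divisors to guarantee that $p^*\eta$ has no poles there, so the pull-back map is well-defined.

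With both $\alpha$ and $\beta$ in hand, they agree on the common isomorphism locus of $\pi$; since the reflexive sheaves $(\Omega_X^1)^{[\otimes m]}$ and $(\Omega_\Gamma^1)^{[\otimes m]}$ are torsion-free, this forces $\alpha\circ\beta$ and $\beta\circ\alpha$ to be the respective identities, and $\alpha$ becomes an isomorphism. The main obstacle is the well-definedness of $\alpha$, \ie the extension of pull-back pluri-forms across the $\pi$-exceptional divisors. Unlike the case of pluri-canonical forms, where canonical singularities directly yield the extension, for arbitrary tensor powers of $\Omega^1$ the argument has to exploit the specific MMP structure of $\Gamma$ --- the exceptional divisors lie over flipping or flipped loci whose discrepancies are controlled by Lemma \ref{iso-can-center} --- rather than any general singularity-theoretic statement.
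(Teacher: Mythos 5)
Your map $\beta$ and the reduction of the whole problem to ``no poles of the pulled-back form along $\pi$-exceptional divisors'' agree with the paper, but your argument for the well-definedness of $\alpha$ --- which is the entire content of the proposition --- has a genuine gap. You claim that for a flip $X_i\dashrightarrow X_{i+1}$, ``canonicity of $X_i$ together with Lemma~\ref{iso-can-center} gives enough control on the discrepancies of these divisors to guarantee that $p^*\eta$ has no poles.'' Discrepancies bound pole orders of \emph{pluri-canonical} forms, i.e.\ sections of $\omega_X^{[m]}$; they say nothing about sections of $(\Omega^1_X)^{[\otimes m]}$, and there is no implication from positivity of discrepancies to extension of pulled-back pluri-forms. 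You yourself point this out in your closing paragraph, but the mechanism you then invoke is exactly the one you have just declared unavailable. The paper's actual mechanism is different and runs through the \emph{second} projection: given $\sigma_X$ on $X$, Lemma~\ref{inj-mmp} applied to $f_X^{-1}$ (which contracts no divisors, being the inverse of an MMP) produces a regular section $\sigma_{X'}$ on $X'$; for a $pr_1$-exceptional divisor $E\subseteq\Gamma$, no curve of $E$ can be contracted by both projections (the graph sits in $X\times X'$ and the normalisation is finite), so $Y=pr_2(E)$ is a curve; since $f_X^{-1}$ is not regular at the generic point of $Y$, the contrapositive of Lemma~\ref{iso-can-center} shows $Y$ is not the center of any discrepancy-zero divisor over $X'$, i.e.\ $X'$ is \emph{terminal}, hence smooth (threefold, $\codim_{X'}Y=2$), at the generic point of $Y$. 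Then $\sigma_\Gamma=pr_2^*\sigma_{X'}$ is regular along $E$ simply because it is the pull-back of a regular section of a locally free sheaf. In short, Lemma~\ref{iso-can-center} is used on the $X'$ side to obtain smoothness of the target at the image curve, not on the $X_i$ side to control discrepancies of the exceptional divisors; you apply it on the wrong side of the flip, and with it the wrong extension mechanism.

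A second, structural problem is that your induction over elementary MMP steps is not actually set up. The proposition concerns $\Gamma$, the normalisation of the graph of the \emph{composite} map $f_X$, and there is no natural morphism from $\Gamma$ to the graphs of the individual steps; nor does an isomorphism statement for each one-step graph formally compose to give the statement for $\Gamma$ (the graph of a composite of birational maps is not assembled from the graphs of the factors in any straightforward way). The paper avoids this entirely by working with the full graph at once: Lemma~\ref{iso-can-center}, although stated for a single divisorial contraction or flip, applies to the composite because discrepancies are monotone along each MMP step (\cite[Lem.~3.38]{KM98}, exactly as in the paper's proof of that lemma). Your outline could probably be repaired along these lines, but as written both the key extension step and the inductive scaffolding are missing.
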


\begin{proof}
Since there is a natural birational projection $pr_1: \Gamma\to X$, we have an injection $H^0(\Gamma,(\Omega_{\Gamma}^1)^{[\otimes m]}) \hookrightarrow H^0(X,(\Omega_X^1)^{[\otimes m]})$ by Lemma \ref{inj-mmp}. Let $pr_2:\Gamma\to X'$ be the natural projection. Now let $\sigma_X \in H^0(X,(\Omega_X^1)^{[\otimes m]})$ be a non-zero element. Since $X$, $\Gamma$ and $X'$ are birational, $\sigma_X$ induces a rational section $\sigma_{\Gamma}$ of $(\Omega_{\Gamma}^1)^{[\otimes m]}$ and an element $\sigma_{X'}$ of $H^0(X',(\Omega_{X'}^1)^{[\otimes m]})$ (see Lemma \ref{inj-mmp}). In order to prove that $\sigma_{\Gamma}$ is a regular section it is enough to prove that $\sigma_{\Gamma}$ does not have pole along any $pr_1$-exceptional divisor. Let $E$ be an exceptional divisor for $pr_1$. Let $C \subseteq E$ be a curve which is exceptional for $pr_1$, then $C$ is not contracted by $pr_2$ since the graph of $f_X$ is included in $X\times X'$ and the normalisation map is finite. Hence $Y=pr_2(E) \subseteq X'$ is a curve and $f_X^{-1}$ is not regular around the generic point of $Y$. By Lemma \ref{iso-can-center}, $X'$ has terminal singularities around the generic point of $Y$. Hence it is smooth around the generic point of $Y$ since codim$_{X'}Y=2$ (see \cite[Cor. 5.18]{KM98}). Hence $\sigma_{\Gamma}$ does not have pole along $E$ ($\sigma_{\Gamma}$ is the pullback of $\sigma_{X'}$ by $pr_2$).
\end{proof}

\subsection{Examples}

In this subsection, we will give some examples of rationally connected varieties which carry non-zero pluri-forms. First we will give an example of such varieties with terminal singularities. For the construction of this example, we use the method of Theorem \ref{thm_curve}.

\begin{exmp}
\label{exmp-curve-1}
Let $C_1=\{[a:b:c] \in \mathbb{P}^2 \ | \  a^3+(a+c)b^2+c^3=0\}$ be a smooth elliptic curve in $\mathbb{P}^2$. Let $X_1=\{([a:b:c],[x:y:z:t]) \in \mathbb{P}^2 \times \mathbb{P}^3 \ | \  a^3+(a+c)b^2+c^3=0, \ (a^2+2b^2+c^2)x^2 + (a^2+3b^2+c^2)y^2 + (a^2+4b^2+3c^2)z^2 + (a^2+5b^2+6c^2)t^2=0 \}$. Then $X_1$ is a smooth threefold and we have an induced fibration $\pi_1:X_1 \to C_1$ such that general fibers of $\pi_1$ are smooth quadric surfaces which are Fano surfaces. Moreover, $X_1$ has Picard number $2$ by the Lefschetz theorem (see \cite[Exmp. 3.1.23]{Lar04}). Hence $\pi_1$ is a Mori fibration. Moreover, since $\pi_1$ is smooth, we have $H^0(X_1, (\Omega^1_{X_1})^{\otimes 2}) \cong H^0(C_1, (\Omega^1_{C_1})^{\otimes 2})$ by Lemma \ref{fibration-section}. Let $G$ be the group $\mathbb{Z}/2\mathbb{Z}$ and let $g\in G$ be the generator. We have an action of $G$ on $\mathbb{P}^2 \times \mathbb{P}^3$ defined by $g\cdot ([a:b:c],[x:y:z:t])=([a:-b:c],[-x:-y:z:t])$. This action induces an action of $G$ on $X_1$ and an action of $G$ on $C_1$ such that $\pi_1$ in $G$-equivariant. We have $C_1/G=\mathbb{P}^1$. The action of $G$ on $X_1$ is \'etale in codimension 2 and it has exactly 16 fix points. Thus $X=X_1/G$ is a threefold which has at most $\mathbb{Q}$-factorial terminal singularities by \cite[(5.1)]{Reid87}. We have a fibration $\pi : X \to \mathbb{P}^1$ induced by $\pi_1$. Moreover, the $\mathbb{Q}$-dimension of Pic$(X)_{\mathbb{Q}}$ is not larger than the Picard number of $X_1$. Hence $\pi$ is a Mori fibration since general fibers of $\pi$ are smooth quadric surfaces. Hence $X$ is rationally connected by \cite[Thm. 1.1]{GHS03}. In addition, $H^0(X, (\Omega^1_{X})^{[\otimes 2]}) \cong H^0(X_1, (\Omega^1_{X_1})^{\otimes 2})^G$ since $X_1 \to X$  is \'etale in codimension $1$. Moreover  $H^0(C_1, (\Omega^1_{C_1})^{\otimes 2})^G \cong H^0(\mathbb{P}^1, \mathscr{O}_{\mathbb{P}^1}) \cong \mathbb{C}$ by \cite[Prop. 6.2]{Ou13}. Hence we have $H^0(X, (\Omega^1_{X})^{[\otimes 2]}) \cong \mathbb{C}$.  
\end{exmp}

\begin{rem}
\label{non-iso-rem}
If $X$ is a normal rationally connected threefold and if $X'$ is the result of a MMP for $X$, then $H^0(X,(\Omega_X^1)^{[\otimes m]})$ may be strictly included in $H^0(X',(\Omega_{X'}^1)^{[\otimes m]})$ for some $m>0$. For example, let $X' \to \mathbb{P}^1$ be the variety described in the example above. Then $X'$ carries non-zero pluri-forms. Let $X \to X'$ be a resolution of singularities of $X'$. Then $X'$ is the result of a MMP for $X$ since $X'$ has terminal singularities. However, since $X$ is rationally connected and smooth, we have $H^0(X, (\Omega^1_{X})^{[\otimes m]}) = \{0\}$ for any $m>0$ (see \cite[Cor. IV.3.8]{Kol96}).
\end{rem}

We will give two examples of rationally connected threefolds with canonical singularities which carry non-zero pluri-forms. In both cases, there is always a Mori fibration from the threefold $X$ to a surface $Z$. In Example \ref{KB-pf}, the base $Z$ is a Calabi-Yau surface with klt singularities which carries non-zero pluri-forms. In Example \ref{over-surf-pf}, the base $Z$ is isomorphic to $\p^2$ which does not carry any non-zero pluri-form.

\begin{exmp}
\label{KB-pf}
Let $C$ be the  curve  in $\mathbb{P}^2$ defined by $C=\{[x_1:x_2:x_3] \in \mathbb{P}^2 \ | \  x_1^3+x_2^3+x_3^3=0   \}$. There exists an action of group $G=\mathbb{Z}/3\mathbb{Z}$ on $C$ defined by $g\cdot [x_1:x_2:x_3] = [\xi x_1:x_2:x_3]$ where $g$ is a generator of $G$ and $\xi$ is a primitive 3rd root of unity. Let $Z_1=C\times C$. Then there is an action of $G$ on $Z_1$ induced by the one of $G$ on $C$. We have $Z=Z_1/G$ is a klt rationally connected surface such that $K_Z$ is a torsion (see \cite[Example 10]{Tot10}). In particular, $K_Z$ is an effective  $\mathbb{Q}$-Cartier divisor. Let $X_1=\mathbb{P}^1 \times Z_1$ and define an action of $G$ on $\mathbb{P}^1$ by $g \cdot [y_1:y_2] \to [\xi y_1:y_2]$ where $[y_1:y_2]$ are homogeneous coordinates of $\mathbb{P}^1$. Then we have an  action of $G$ on the smooth threefold $X_1$ and $X=X_1/G$ has canonical singularities (see \cite[Thm. 3.1]{Reid79}). We also have an Mori fibration $X \to Z$ whose general fibers are isomorphic to $\p^1$. Since $Z$ is rationally connected, so is $X$ by \cite[Thm. 1.1]{GHS03}. Moreover, since $K_Z$ is effective, we have $H^0(X,(\Omega^1_{X})^{[\otimes m]}) \neq \{0\}$ for some $m>0$ (see \S 4.2). 
\end{exmp}

In the example above, the non-zero pluri-forms  come from $K_Z$, the canonical divisor of the base surface. In the following example, $-K_Z$ is ample. However $X$ still carries  non-zero pluri-forms. These forms come from the multiple fibers of the fibration $X \to Z$. Before giving the example, we will first introduce a method to construct  fibers with high multiplicities.

\begin{const}
\label{const-non-reduced-fib}
We want to construct a fibration $p$ from a normal threefold $T$ with canonical singularities to a smooth surface $S$ such that $p^*C=2(p^*C)_{red}$, where $C$ is a smooth curve in $S$.

Let $T_0=\mathbb{P}^1 \times S$ where $S$ is a smooth projective surface. Let $pr_1$, $pr_2$ be the natural projections $T_0 \to \p^1$ and $T_0 \to S$. Let $C \subseteq S$ be a smooth curve and let $C_0=\{z\}\times C$ be a section of $pr_2$ over $C$ in $T_0$, where $z$ is a point of $\p^1$. Let $E_0 = pr_2^* C \subseteq T_0$ be a smooth divisor. We will perform a sequence of birational transformation of $T_0$.

First we blow up $C_0$, we obtain a morphism $T_1 \to T_0$ with  exceptional divisor $E_1$. Denote still by $E_0$ the strict transform of $E_0$ in $T_1$. If $F$ is a fiber of  $T_1 \to S$ over a point of $C$, then $F=F_1+F_0$ where $F_i \subseteq E_i$ are rational curves and $K_{T_1} \cdot F_i = -1$, $E_i \cdot F_i = -1$.

Now we blow up $C_1$, the intersection of $E_0$ and $E_1$. We obtain a morphism $T_2 \to T_1$ with exceptional divisor $E_2$. Denote still by $E_0$, $E_1$ the strict transforms of $E_0$ and $E_1$ in $T_2$. If $F$ is any set-theoretic fiber of $T_2 \to S$ over a point of $C$, then $F=F_0+F_1+F_2$ where $F_i \subseteq E_i$ are rational curves and $K_{T_2} \cdot F_i = 0$, $E_i \cdot F_i = -2$ for $i=0,1$, $K_{T_2} \cdot F_2 = -1$, $E_2 \cdot F_2 = -1$. Let $p_2$ be the fibration $T_2 \to S$.

We will blow down $E_1$ and $E_0$ in $T_2$. Let $H$ be an ample $\mathbb{Q}$-divisor such that $(H+E_0) \cdot F_0 = 0$. Since $E_0 \cdot F_1=0$, there is a positive rational number $b$ such that $(H+E_0+bE_1) \cdot F_1 = 0$. Moreover $(H+E_0+bE_1) \cdot F_0 = 0$ since $E_1 \cdot F_0=0$. Let $A$ be a sufficiently ample divisor on $S$. Then $D=p_2^*A+H+E_0+bE_1$ is a nef and big $\mathbb{Q}$-divisor. Moreover, any curve $B$ have intersection number $0$ with $D$ must be contract by $T_2 \to S$ since $A$ is sufficiently ample. The curve $B$ is also contained in $E_0 \cup E_1$ since $H$ is ample. Since $K_{T_2} \cdot F_0 =K_{T_2} \cdot F_1 = 0$, there exists a large integer $k$ such that $kD-K_{T_2}$ is nef and big.  Then by the basepoint-free theorem (see \cite[Thm. 3.3]{KM98}), there is a large enough integer $a$ such that $aD$ is Cartier and $|aD|$ is basepoint-free.  The linear system $|aD|$ induces a contraction which contracts $E_0$ and $E_1$.

By contracting $E_0$ and $E_1$, we obtain a threefold $T$. And there is an induced fibration $p:T \to S$ such that $p^*C = 2(p^*C)_{red}$. Notice that $T_2 \to T$ is a resolution of singularities and $K_{T_2}=f^*K_T$. Hence $V$ has canonical singularities.
\end{const}

Now we will construct the example.

\begin{exmp}
\label{over-surf-pf}
Let $C=\{[x:y:z] \in \mathbb{P}^2 \ | \  x^6+y^6+z^6=0 \}$ be a smooth curve in $\mathbb{P}^2$. Then $\mathscr{O}_{\mathbb{P}^2}(2K_{\p^2}+C) \cong \mathscr{O}_{\mathbb{P}^2}$ is effective. Let $X_0=\p^1 \times \p^2$. By the method of Construction \ref{const-non-reduced-fib}, we can construct a fiber space $p:X \to \p^2$ such that $p^*C = 2(p^*C)_{red}$. Then $X$ has  canonical singularities. Since general fibers of $p$ are isomorphic to $\p^1$, $X$ is rationally connected by \cite[Thm. 1.1]{GHS03}. Moreover, since $2(K_{\p^2}+\frac{1}{2}D)$ is an effective divisor and $p$ is equidimensional, we have $H^0(X, (\Omega_X^1)^{[\otimes 4]}) \neq \{0\}$ (see \S 4.2).
\end{exmp}

Note that in the three examples above, the variety $X$ we constructed is a rationally connected varieties with canonical singularities. The divisor $K_X$ is not pseudo-effective and we always have a Mori fibration from $X$ to a variety $Z$. The non-zero pluri-forms of $X$ come from the base $Z$. However, in the following example, the variety we construct is a rationally connected threefold with klt singularities whose canonical divisor is ample. Its non-zero pluri-forms come from its canonical divisor.

\begin{exmp}
\label{klt-type-gal}
Let $X_1$ be the Fermat hypersurface in $\mathbb{P}^4$ defined $x_0^6+x_1^6+ \cdots + x_4^6=0$. Then $X_1$ is a smooth threefold such that $K_{X_1}$ is ample. Moreover, the Picard number of $X_1$ is $1$ by the Lefschetz theorem (see \cite[Exmp. 3.1.23]{Lar04}). Let $G$ be the group $\mathbb{Z}/6\mathbb{Z}$ with generator $g$. Define an action of $G$ on $\p^4$ by $g\cdot [x_0:\cdots:x_4]=[\omega x_0:\omega x_1:x_2:x_3:x_4]$, where $\omega$ is a primitive 6th root of unity. This action induces an action of $G$ on $X_1$ which is free in codimension $1$. Denote the quotient $X_1/G$ by $X$ and the natural morphism $X_1 \to X$ by $\pi$. Then $X$ has $\mathbb{Q}$-factorial klt singularities but does not have canonical singularities (see \cite[Thm 3.1]{Reid79}). Now we will prove that $X$ is rationally connected. First we will prove $X$ is uniruled. If $w$ is a general point in $X$, then there is a point $w_1\in X_1$ such that $\pi(w_1) = w$ and the first two coordinates of $w_1$ are all non-zero. There are two hyperplanes in $\p^4$ passing through $w_1$,  $H_1=\{a_2x_2+a_3x_3+a_4x_4=0\}$ and $H_2=\{b_2x_2+b_3x_3+b_4x_4=0\}$, such that the intersection $C_1=H_1\cap H_2 \cap X_1$ is a smooth curve of genus $10$. Moreover, there are exactly $6$ points on $C_1$, given by $X_1 \cap \{x_2=x_3=x_4=0\}$, which are fixed under the action of $G$. Hence $\pi|_{C_1}:C_1 \to \pi(C_1)=C$ is ramified exactly at those $6$ points with degree $6$. By adjunction formula, we have the genus of $C$,  $g(C)=1+2^{-1} \times 6^{-1} \times (2\times 10 -2 -6 \times 5)=0$. Hence $C$ is a smooth rational curve. This implies that $X$ is uniruled. From the lemma below, we conclude that $X$ is rationally connected.
\end{exmp}

\begin{lemma}
\label{pic-1-rc}
Let $X$ be a normal projective variety which is $\mathbb{Q}$-factorial. Assume that $\mathrm{Pic}_{\mathbb{Q}}(X) \cong \mathbb{Q}$. Then $X$ is rationally connected if and only if it is uniruled.
\end{lemma}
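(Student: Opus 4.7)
The forward implication is immediate: rational connectedness implies that a general point lies on a rational curve, which is precisely uniruledness. For the converse, I would argue by contradiction using the maximal rationally connected fibration. Suppose $X$ is uniruled but not rationally connected; then on a dense open $U \subseteq X$ inside the smooth locus, the MRC of $X$ is represented by a proper surjective morphism $\pi_U : U \to Y$ onto a normal projective variety $Y$ with rationally connected general fibers. By the Graber--Harris--Starr theorem, $Y$ is not uniruled; and since $X$ itself is assumed not rationally connected, we must have $\dim Y \geqslant 1$.

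The plan is to produce an effective divisor on $X$ coming from $Y$ and contradict the Picard rank hypothesis through two opposing intersection computations. I would fix an ample effective divisor $H$ on $Y$ and let $D$ be the closure in $X$ of $\pi_U^{-1}(H)$. Because $\pi_U$ is surjective and $H$ is a non-zero divisor, $D$ is a non-zero effective Weil divisor on $X$, and $\bQ$-Cartier by the $\bQ$-factoriality assumption. Pick any ample divisor $A$ on $X$; then the effective non-zero $D$ satisfies $D \cdot A^{\dim X - 1} > 0$, so under the hypothesis $\Pic(X)_{\bQ} \cong \bQ$, the class of $D$ is a strictly positive rational multiple of that of $A$. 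In particular $D \cdot C > 0$ for every irreducible curve $C \subseteq X$.

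On the other hand, pick a general $y \in Y \setminus \Supp H$, so that $F = \pi_U^{-1}(y)$ is rationally connected of positive dimension. Then $F$ is uniruled and contains a rational curve $C \subseteq F \subseteq U$; since $F$ is disjoint from $\pi_U^{-1}(H)$, the curve $C$ is disjoint from $D$, giving $C \cdot D = 0$, contradicting the previous paragraph. Hence $\dim Y = 0$, so $X$ is rationally connected. The main subtlety is that the MRC fibration is only a rational map, and one needs its almost-holomorphicity to pass to a genuine morphism on the open subset $U$; the $\bQ$-factoriality assumption then makes the closure of $\pi_U^{-1}(H)$ a $\bQ$-Cartier divisor on $X$ whose intersection numbers with curves are well defined, which is exactly what allows the Picard rank hypothesis to be deployed.
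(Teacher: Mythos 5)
Your proof is correct and follows essentially the same route as the paper: both use the almost-holomorphic MRC fibration, pull back a divisor from the base over the locus where the fibration is regular and proper, take its closure in $X$, and contradict $\mathrm{Pic}(X)_{\mathbb{Q}} \cong \mathbb{Q}$ by intersecting with a (complete) curve in a fiber disjoint from that divisor. The only differences are cosmetic: your appeal to Graber--Harris--Starr for non-uniruledness of the base is never used, and you spell out the positivity step $D \cdot A^{\dim X - 1} > 0$ that the paper leaves implicit.
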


\begin{proof}
Note that if $X$ is smooth, this was proven in \cite[Cor. 2.4]{cam92}. Assume that $X$ is uniruled and let $\pi: X \dashrightarrow Z$ be the MRC fibration for $X$ (see \cite[Thm. 2.3]{cam92}). We will argue by contradiction. Assume that dim$X>$dim$Z>0$. By \cite[Thm. 2.3]{cam92}, there is a Zariski open subset $Z_0$ of $Z$ such that $\pi$ is regular and proper over $Z_0$. Let $C$ be a curve contained in some fiber of $\pi|_{Z_0}$. Let $D$ be a non-zero effective Cartier divisor on $Z_0$. Let $E_0=\pi^*D$. Then $E_0$ is a non-zero effective divisor on $X_0=\pi^{-1}Z_0$. Let $E$ be the closure of $E_0$ in $X$, then $E$ is a non-zero effective Weil divisor on $X$. However, the intersection number of $E$ and $C$ is zero, which contradicts $\mathrm{Pic}_{\mathbb{Q}}(X) =\mathbb{Q}$.
\end{proof}

\section{Fibrations over curves}
\label{dimZ=1}

\subsection{Construction of rationally connected varieties carrying non-zero pluri-forms}

In this subsection, we will give a method to construct a rationally connected varieties which carry non-zero pluri-forms (Theorem \ref{thm_curve}). Together with Theorem \ref{terminal-3-fold}, we will see that every rationally connected threefold with $\mathbb{Q}$-factorial terminal singularities which carries non-zero pluri-forms can be constructed by this method.

From Lemma \ref{fibration-section}, we know that if general fibers of an equidimensional fibration $p:X \to Z$  do not carry any non-zero pluri-form, then $H^0(X,(\Omega_X^1)^{[\otimes m]}) \cong H^0(X, ((p^*\Omega_Z^1)^{sat})^{[\otimes m]})$ for all $m>0$. Moreover, $H^0(X, ((p^*\Omega_Z^1)^{sat})^{[\otimes m]}) \cong  H^0(Z,p_*((p^*\Omega_Z^1)^{sat})^{[\otimes m]}))$. Hence we would like to know what $p_*((p^*\Omega_Z^1)^{sat})^{[\otimes m]})$ is. In the case where $Z$ is a smooth curve, this is not difficult to compute. Notice that if $R$ is the ramification divisor of $p$, then $((p^*\Omega_Z^1)^{sat})^{[\otimes m]} \cong (p^*\Omega_Z^1)^{\otimes m} \otimes \sO_X(mR)$ for $m>0$. By the projection formula, we know that  $p_*((p^*\Omega_Z^1)^{sat})^{[\otimes m]}) \cong (\Omega_Z^1)^{\otimes m} \otimes p_*\sO_X(mR)$. Hence it is enough to compute $p_*\sO_X(mR)$.

\begin{lemma}
\label{pushdown-form}

Let $p:X \to Z$ be a fibration from a normal variety to a smooth curve. Let $z$ be a point in $Z$. Let $D=p^*z-(p^*z)_{red}$. Then $p_*\mathscr{O}_{X}(mD) \cong \mathscr{O}_Z([\frac{m(m(p,z)-1)}{m(p,z)}] z)$ for any $m\geqslant 0$.
\end{lemma}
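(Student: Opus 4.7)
Write $p^{*}z = \sum_{i} a_{i} E_{i}$, where the $E_{i}$ are the distinct prime divisors of $X$ dominating (or mapping to) $z$ and $a_{i}\ge 1$ are positive integers; set $a=m(p,z)=\min_{i}a_{i}$. Then $D = \sum_{i}(a_{i}-1)E_{i}$, and the question is purely local on $Z$ around $z$, since $D$ is supported in $p^{-1}(z)$.

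The first step is to observe that $p_{*}\mathscr{O}_{X}(mD)$ is an invertible sheaf of the shape $\mathscr{O}_{Z}(N\cdot z)$ for a well-defined integer $N\ge 0$. Indeed, $p_{*}\mathscr{O}_{X}(mD)$ is coherent (as $p$ is proper), torsion-free (as a subsheaf of the constant sheaf associated with $K(X)$), and on the smooth curve $Z$ any such sheaf is locally free; its generic rank is $1$ because $D$ is trivial over $Z\setminus\{z\}$ and $p_{*}\mathscr{O}_{X}=\mathscr{O}_{Z}$ (this is built into our notion of fibration). The inclusion $\mathscr{O}_{X}\hookrightarrow\mathscr{O}_{X}(mD)$ (valid because $mD\ge 0$) yields $\mathscr{O}_{Z}\hookrightarrow p_{*}\mathscr{O}_{X}(mD)$, forcing $N\ge 0$.

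The main step is to identify $N$, which I would do by comparing both sides locally on a small affine neighborhood $U$ of $z$ with local coordinate $t$. Take any $f\in H^{0}(p^{-1}(U),\mathscr{O}_{X}(mD))$. Since $D$ is supported in $p^{-1}(z)$, $f$ is everywhere regular on $p^{-1}(U\setminus\{z\})$. Because $p$ is a fibration (in particular proper with $p_{*}\mathscr{O}_{X}=\mathscr{O}_{Z}$, so every fiber satisfies $H^{0}(F_{s},\mathscr{O}_{F_{s}})=\mathbb{C}$), the restriction of $f$ to a fiber $F_{s}$ over $s\neq z$ is constant. Hence $f=p^{*}g$ for some regular function $g$ on $U\setminus\{z\}$, say $g=t^{-k}\cdot u$ with $u$ a unit at $z$ and $k\in\mathbb{Z}$. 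Writing the condition $\mathrm{div}(p^{*}g)+mD\ge 0$ component by component along each $E_{i}$ gives $-k\,a_{i}+m(a_{i}-1)\ge 0$, i.e.\ $k\le m(a_{i}-1)/a_{i}=m-m/a_{i}$ for every $i$. The binding constraint is the one for the smallest $a_{i}$, namely $k\le m-m/a$, and the largest admissible integer is $k=\lfloor m(a-1)/a\rfloor$.

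Conversely, taking $g=t^{-k}$ with this value of $k$ produces a section of $\mathscr{O}_{X}(mD)$ on $p^{-1}(U)$, so this bound is attained. Combined with the previous paragraph, this identifies $N=[m(m(p,z)-1)/m(p,z)]$, proving $p_{*}\mathscr{O}_{X}(mD)\cong\mathscr{O}_{Z}([m(m(p,z)-1)/m(p,z)]\,z)$. I do not foresee any real obstacle; the one point that deserves care is the constancy of $f$ on the fibers $F_{s}$ for $s\neq z$, which crucially relies on the definition of a fibration (connected fibers with $H^{0}(F_{s},\mathscr{O}_{F_{s}})=\mathbb{C}$), not merely on proper dominance.
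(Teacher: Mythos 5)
Your proof is correct and follows essentially the same route as the paper's: identify sections of $p_*\mathscr{O}_X(mD)$ near $z$ with rational functions on $Z$ having a pole only at $z$, and bound the pole order $k$ by the component-wise condition $ka_i\le m(a_i-1)$, which is binding at the minimal multiplicity $a=m(p,z)$. You merely spell out details the paper leaves implicit (invertibility of the pushforward, and the descent $f=p^*g$ via $p_*\mathscr{O}_X=\mathscr{O}_Z$), so there is nothing to add.
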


\begin{proof}
The problem is local and we may assume that $Z$ is affine. We know that $p_*\mathscr{O}_{W}(mD)$ is an invertible sheaf. If $\theta$ is a section of $p_*\mathscr{O}_{W}(mD)$, then it is a rational function on $Z$ whose pullback on $W$ is a rational function which can only have pole alone $p^*z$. Hence $\theta$ can only have pole at $z$. Let $d_{\theta}$ be the degree of the pole, then the pullback of $\theta$ in $V$ is a section of $\mathscr{O}_{W}(mD)$ if and only if $m(p,z)d_{\theta} \leqslant m(m(p,z)-1)$, \textit{i.e.} $d_{\theta} \leqslant [\frac{m(m(p,z)-1)}{m(p,z)}]$.
\end{proof}

This relation between ramification divisor and pluri-forms gives us an idea of how to construct rationally connected varieties which carry non-zero pluri-forms. Notice that if $p:X \to \p^1$ is a fibration such that  general fibers of $p$ are rationally connected, then $X$ is rationally connected by \cite[Thm. 1.1]{GHS03}. Moreover, if general fibers of $p$ do not carry any non-zero pluri-form, then from the discussion above $X$ carries non-zero pluri-forms if and only if $(\Omega_{\p^1}^1)^{\otimes m} \otimes p_*\sO_X(mR)$ has non-zero sections for some $m>0$, where $R$ is the ramified divisor of $p$. However, $\Omega_{\p^1}^1 \cong \sO_{\p^1}(-2)$ and $ p_*\sO_X(mR) \cong \sO_{p^1}(\sum_{z\in \p^1}[\frac{(m(p,z)-1)m}{m(p,z)}])$ since any two points in $\p^1$ are linearly equivalent. Hence $X$ carries non-zero pluri-forms if and only if $\sum_{z\in \p^1}\frac{m(p,z)-1}{m(p,z)} \geqslant 2$.

Now we will try to construct this kind of varieties by taking quotients. Let $T$ be a normal projective variety, let $B$ be a smooth projective curve and let $G$ be a finite commutative group. Assume that there are actions of $G$ on $T$ and $B$. Assume that there is a $G$-equivariant fibration $q:T \to B$. Let $p_T:T \to T/G$, $p_B: B \to B/G$ be natural projections and let $h:T/G \to B/G$ be the induced fibration. Let $S_i$, $i=1,...,r$ be all the $G$-orbits in $B$ whose cardinality is less than $|G|$, the cardinality of $G$. Let $z_i$ be the image of $S_i$ under the map $B \to B/G$. Then the $z_i$'s are the points in $B/G$ over which $B\to B/G$ is ramified.  Let $G_i$ be the stabilizer of a point $b_i$ in $S_i$. Then $G_i$ acts on the set-theoretic fiber $q^{-1}\{b_i\}=F_i$. If $A_i$ is a component in $F_i$ and the stabilizer of a general point in $A_i$ has cardinality $d_i$, then $p_T$ is ramified along  $G_i\cdot A_i$ of degree $d_i$, where $G_i\cdot A_i$ is the orbit of $A_i$ under $G_i$. In this case, $p_{W} (A_i)$ has coefficient $\frac{e_if_i}{d_i}$ in $h^*z_i$, where $e_i$ is the coefficient of $A_i$ in $q^*b_i$ and $f_i$ is the cardinality of $G_i$. Denote min$\{\frac{e_if_i}{d_i}\ |  \  A_i \ \mathrm{a \ component \ in} \  F_i\}$ by $s_i$. Then $s_i$ is equal to $m(h,z_i)$.  We have the theorem below.

\begin{thm}
\label{thm_curve}
Let $T$ be a projective normal variety, $B$ be a projective smooth curve with positive genus and $G$ be a finite commutative group. Assume they satisfy the following:

\begin{enumerate}
\item There is a fibration  $q$ from $T$ to $B$ such that for any general fiber $F_{q}$ of $q$, $F_{q}$ is rationally connected and does not carry any non-zero pluri-form. Moreover, $m(q,b)=1$ for all $b \in B$.
\item There exist actions of $G$ on $B$ and on $T$ such that $q$ is $G$-equivariant.
\item The quotient $B/G$ is isomorphic to $\mathbb{P}^1$.
\end{enumerate}

Then the quotient $X=T/G$ is a normal projective rationally connected variety and there is a fibration $p:X \to \mathbb{P}^1$. Moreover, if we define $s_i$, $i=1,...,r$ as above, then $X$ carries non-zero pluri-forms if and only if $\Sigma_{i=1}^{r}\frac{s_i-1}{s_i} \geqslant 2$. In fact, $H^0(X, (\Omega^1_X)^{[\otimes m]}) \cong H^0(\mathbb{P}^1, \mathscr{O}_{\mathbb{P}^1}(-2m+\sum_{i=1}^{r}[\frac{(s_i-1)m}{s_i}]))$ for $m>0$.
\end{thm}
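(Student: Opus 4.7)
The plan is to work directly with the induced quotient fibration $p:X\to\p^1$ rather than pass through $T$ and take $G$-invariants. The quotient $X=T/G$ is a normal projective variety, and the $G$-equivariance of $q$ produces a morphism $p$ to $B/G\cong\p^1$. For $z\in\p^1$ outside the branch locus $\{z_1,\ldots,z_r\}$ of $p_B:B\to\p^1$, the group $G$ acts freely on $p_B^{-1}(z)$ and hence freely on the $|G|$ disjoint fibers of $q$ above it, so the scheme-theoretic fiber $p^{-1}(z)$ is isomorphic to a general fiber $F_q$ of $q$. In particular $p$ is a fibration whose general fibers are rationally connected and carry no non-zero pluri-form, and by Graber--Harris--Starr \cite[Thm.~1.1]{GHS03} applied to $p$, $X$ itself is rationally connected.

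Next I would compute pluri-forms using $p$ directly. Since $p$ is equidimensional to a smooth curve and its general fibers carry no pluri-forms, Lemma~\ref{fibration-section} gives $H^0(X,(\Omega_X^1)^{[\otimes m]})\cong H^0(X,((p^*\Omega_{\p^1}^1)^{sat})^{[\otimes m]})$. As explained in the paragraph preceding the theorem, this reflexive sheaf equals $p^*(\Omega_{\p^1}^1)^{\otimes m}\otimes\sO_X(mR_p)$, where $R_p=\sum_{z\in\p^1}(p^*z-(p^*z)_{red})$ is the ramification divisor of $p$. The projection formula together with Lemma~\ref{pushdown-form} then yield
\[
H^0(X,(\Omega_X^1)^{[\otimes m]})\cong H^0\!\left(\p^1,\sO_{\p^1}\!\left(-2m+\sum_{z\in\p^1}\!\left[\tfrac{m(m(p,z)-1)}{m(p,z)}\right] z\right)\right).
\]
For $z$ not among the $z_i$, the identification $p^{-1}(z)\cong F_q$ and the hypothesis $m(q,b)=1$ give $m(p,z)=1$; at each branch point $z_i$, the bookkeeping spelled out just before the theorem, which compares $p_T^*p^*z_i$ with $q^*p_B^*z_i$ via the $G$-orbits of the components $A_{i,j}\subset F_i$, gives $m(p,z_i)=s_i$. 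Substituting yields the claimed formula.

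Finally, to deduce the qualitative equivalence, set $a=\sum_{i=1}^r (s_i-1)/s_i$. If $a<2$, then $-2m+\sum_i[m(s_i-1)/s_i]\leqslant m(a-2)<0$ for every $m>0$. If $a>2$, the degree grows linearly in $m$ and eventually becomes positive. If $a=2$, taking $m$ to be the least common multiple of the $s_i$ makes every floor an equality, so the degree equals $0$ and $H^0(\p^1,\sO_{\p^1})\cong\c$. The main technical point is the identification $m(p,z_i)=s_i$: it requires tracking how the ramification indices of $p_B$ (value $f_i$ at each point of $S_i$) and of $p_T$ (value $d_{i,j}$ along the $G$-orbit of $A_{i,j}$) combine into the multiplicities of components of $p^*z_i$. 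Fortunately this has essentially been carried out in the paragraph preceding the theorem, so our job is to assemble the above ingredients cleanly.
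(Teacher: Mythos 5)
Your proposal is correct and follows essentially the same route as the paper's own proof: the induced fibration $p:X\to\p^1$ plus \cite[Thm.~1.1]{GHS03} for rational connectedness, then Lemma~\ref{fibration-section}, the identification of $((p^*\Omega_{\p^1}^1)^{sat})^{[\otimes m]}$ via the ramification divisor, the projection formula, Lemma~\ref{pushdown-form}, and the pre-theorem bookkeeping identifying $m(p,z_i)=s_i$. Your only additions are extra detail the paper leaves implicit (the free-action identification of general fibers of $p$ with those of $q$, and the lcm argument for the degree criterion), which are fine.
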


\begin{proof}
The fibration $p:X \to \p^1$ is induced by $q$. Since general fibers of $q$ are rationally connected, general fibers of $p$ are also rationally connected. Hence $X$ is rationally connected by \cite[Thm. 1.1]{GHS03}. If $F_{p}$ is a general fiber of $p$, then $F_p$ does not carry any non-zero pluri-form neither. Hence, $H^0(X, (\Omega^1_X)^{[\otimes m]}) \cong H^0(\mathbb{P}^1, \mathscr{O}_{\mathbb{P}^1}(-2m)\otimes p_*\sO_X(mR))$ for $m>0$, where $R$ is the ramification divisor. However, since $m(q,b)=1$ for all $b \in B$, we have $\sum_{i=1}^{r}[\frac{(s_i-1)m}{s_i}] = \sum_{z\in \p^1}[\frac{(m(p,z)-1)m}{m(p,z)}]$ by the definition of $s_i$. Finally, by Lemma \ref{pushdown-form}, we obtain $p_*\sO_X(mR) \cong \sO_{\p^1}(\sum_{i=1}^{r}[\frac{(s_i-1)m}{s_i}])$ and  $H^0(X, (\Omega^1_X)^{[\otimes m]}) \cong H^0(\mathbb{P}^1, \mathscr{O}_{\mathbb{P}^1}(-2m+\sum_{i=1}^{r}[\frac{(s_i-1)m}{s_i}]))$.
\end{proof}

\begin{rem}
\label{rem-thm-curve}
Conversely, Let $p:X \to \p^1$ be a fibration  such that general fibers of $p$ are rationally connected and do not carry non-zero pluri-form. If $X$ carries non-zero pluri-forms, then $X$ can be constructed by the method described above. In fact, by the discussion above, we have $\sum_{z\in \p^1}\frac{(m(p,z)-1)}{m(p,z)} \geqslant 2$. In particular, there are at least three points in $\p^1$ such that the multiplicity of $p$ is larger than $1$ over these points. Let $z_1,...,z_r$ be all the points in $\p^1$ over which  the multiplicity of $p$ is larger than $1$. Let $m_i=m(p,z_i)$. Since $r\geqslant 3$, there is a smooth curve $B$ and a Galois cover $p_B:B \to \p^1$ with Galois group $G$ such that $p_B$ is ramified exactly over the $z_i's$ and the degree of ramification is $m_i$ at each point over $z_i$ (see \cite[Lem. 6.1]{KO82}). Let $T$ be the normalisation of the fiber product $X\times_{\p^1}B$. Then we obtain a natural fibration $q:T \to B$ such that $m(q,b)=1$ for all $b\in B$. Moreover, $G$ acts naturally on $T$ and $T/G\cong X$.
\end{rem}

\subsection{The case of threefolds with canonical singularities}

If $p:X \to Z$ is a Mori fibration from a threefold which has $\mathbb{Q}$-factorial canonical singularities to a smooth curve, then general fibers of $p$ are Fano surfaces which have canonical singularities. These surfaces do not carry any non-zero pluri-form by the following proposition.

\begin{prop}
\label{vanish-fano}
If $S$ is a Fano surface which has canonical singularities, then $H^0(S,(\Omega^1_S)^{[\otimes m]}) = \{0\}$ for all $m>0$.
\end{prop}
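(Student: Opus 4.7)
The plan is to pass to the minimal resolution and reduce the statement to Kollár's vanishing of pluri-forms on smooth rationally connected varieties \cite[Cor. IV.3.8]{Kol96}. Since $S$ has canonical singularities (i.e.\ Du Val singularities, as $S$ is a surface), the minimal resolution $\mu : \widetilde{S} \to S$ is crepant: $K_{\widetilde{S}} = \mu^* K_S$. Because $-K_S$ is ample, the divisor $-K_{\widetilde{S}} = -\mu^* K_S$ is nef and big, so $\widetilde{S}$ is a smooth weak del Pezzo surface. In particular $\widetilde{S}$ is rational, hence smooth and rationally connected, so Kollár's theorem gives $H^0(\widetilde{S}, (\Omega^1_{\widetilde{S}})^{\otimes m}) = \{0\}$ for every $m > 0$.

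The remaining task is to produce an injection
\[
H^0(S, (\Omega^1_S)^{[\otimes m]}) \hookrightarrow H^0(\widetilde{S}, (\Omega^1_{\widetilde{S}})^{\otimes m}).
\]
By definition, a section $\sigma$ of $(\Omega^1_S)^{[\otimes m]}$ is the datum of a section of $(\Omega^1_S)^{\otimes m}$ on $S_{ns}$. Since $\mu$ is an isomorphism over $S_{ns}$, this same $\sigma$ defines a section of $(\Omega^1_{\widetilde{S}})^{\otimes m}$ over the open set $\widetilde{S} \setminus E$, where $E$ is the $\mu$-exceptional divisor. It therefore suffices to show that $\sigma$ extends across $E$; since $\widetilde{S}$ is smooth and $E$ is of pure codimension one, pole orders along irreducible components of $E$ are well-defined, and extension is a local question near each component.

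The main obstacle is exactly this extension step, where the hypothesis of canonical singularities becomes essential. Canonical surface singularities are quotient singularities $\mathbb{C}^2 / G$ with $G \subset SL_2(\mathbb{C})$ finite, so locally a reflexive pluri-form on $S$ corresponds to a $G$-invariant element of $H^0(\mathbb{C}^2, (\Omega^1_{\mathbb{C}^2})^{\otimes m})$, which is automatically regular. Comparing $\widetilde{S}$ to the $G$-equivariant resolution obtained from the normalized fiber product $\mathbb{C}^2 \times_S \widetilde{S}$ (which, since $G \subset SL_2$, is a finite cover ramified only along the exceptional locus in a way that is compensated by the $G$-invariance of $\sigma$), one checks by a direct local calculation that invariant pluri-forms pull back without poles along the exceptional $(-2)$-curves. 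Equivalently, this extension is the surface, tensor-power version of the Greb–Kebekus–Kovács–Peternell extension theorem, which in the crepant situation at hand can be verified by hand on the ADE list. Once $\sigma$ is lifted to a regular section on $\widetilde{S}$, Step~3 forces it to vanish, which gives the desired conclusion $H^0(S, (\Omega^1_S)^{[\otimes m]}) = \{0\}$.
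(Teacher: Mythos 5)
Your reduction to the minimal resolution has a fatal gap: the claimed injection $H^0(S,(\Omega^1_S)^{[\otimes m]}) \hookrightarrow H^0(\widetilde{S},(\Omega^1_{\widetilde{S}})^{\otimes m})$ does not exist, and the local extension claim on which it rests is false. Take the $A_1$-singularity $S=\mathbb{C}^2/\{\pm 1\}$ and the invariant pluri-form $\sigma=(dx)^{\otimes 2}$; since $\mathbb{C}^2\setminus\{0\}\to S_{ns}$ is \'etale, $\sigma$ descends to a section of $(\Omega^1_S)^{[\otimes 2]}$ by Proposition \ref{refle-cod-2}. On the chart of the minimal resolution obtained as the quotient of the blow-up chart $(x,t)$, $y=xt$, the coordinates are $(s,t)$ with $s=x^2$, the exceptional $(-2)$-curve is $\{s=0\}$, and $ds$ pulls back to $2x\,dx$; hence $\sigma$ corresponds to $(ds)^{\otimes 2}/(4s)$, which has a pole of order one along the exceptional curve. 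So invariant \emph{tensor} powers of forms do not extend, contrary to your ``direct local calculation''. The underlying confusion is between the reflexive exterior powers $\Omega^{[p]}_S$, for which the GKKP extension theorem does hold, and the reflexive tensor powers $(\Omega^1_S)^{[\otimes m]}$, for which it fails; there is no tensor-power version of GKKP, and this failure is precisely the phenomenon the present paper studies (see Remark \ref{non-iso-rem} and the counterexample \cite[Example 3.7]{GKP12} cited in the Introduction).

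You can also see structurally that your argument proves too much. The Fano hypothesis enters only to guarantee that $\widetilde{S}$ is rational; but the minimal resolution of \emph{any} rationally connected surface with canonical singularities is a smooth rational surface, so your argument, if it were correct, would show that all such surfaces carry no pluri-forms. That contradicts Theorem \ref{can-surf-thmc} (\cite[Thm. 1.3]{Ou13}): for instance $(E\times\p^1)/\langle\iota\rangle$, with $E$ elliptic and $\iota$ acting as $(-1)$ on $E$ and as an involution on $\p^1$, is a rational surface with eight $A_1$-points carrying a non-zero reflexive $2$-pluri-form descended from $(dt)^{\otimes 2}$ on $E$. So ampleness of $-K_S$ must be used globally, not merely through rationality of $\widetilde{S}$. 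The paper's proof does exactly this: assuming a non-zero pluri-form exists, it runs an MMP $S\to S'$, invokes \cite[Thm. 2.1]{Ou13} to conclude that $S'$ admits a Mori fibration $S'\to\p^1$, then uses \cite[Thm. 1.4]{Ou13} to build a cover $S_B\to S$, \'etale in codimension one, from a curve $B$ of positive genus; $S_B$ is again Fano with canonical singularities, hence rationally connected by Hacon--McKernan, forcing $B$ to be rationally connected --- a contradiction with its positive genus.
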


\begin{proof}
Assume the opposite. If $S'$ is the result of a MMP for $S$, then $S'$ is a Mori fiber space. By \cite[Thm. 2.1]{Ou13}, $S'$ has Picard number $2$ and we have a Mori fibration $S' \to \p^1$. Let $p_S$ be the composition of $S \to S' \to \p^1$. By \cite[Thm1.4]{Ou13}, there is a smooth curve $B$ of positive genus and a finite morphism $B\to \p^1$ such that the natural morphism $S_B \to S$ is \'etale in codimension $1$, where $S_B$ is the normalisation of $S\times_{\p^1} B$. Hence $S_B$ is a Fano surface. Moreover it has canonical singularities (see \cite[Prop. 5.20]{KM98}). So it is rationally connected by \cite[Cor. 1.3 and 1.5]{HM07}. Hence $B$ is also rationally connected, which is a contradiction since its genus is positive.
\end{proof}

Thanks to this proposition, we obtain the following result. 

\begin{prop}
\label{can-over-curve}
Let $X$ be a rationally connected threefold with $\mathbb{Q}$-factorial canonical singularities. Assume that $X$ carries non-zero pluri-forms. Let $X'$ be the result of a MMP for $X$ and assume that there is a Mori fibration $p':X' \to \p^1$. Let $Y$ be the normalisation of the graph of $X\dashrightarrow X'$. Then  $Y$ can be constructed by the method of Theorem \ref{thm_curve}. After a contraction from $Y$, we can obtain $X$.
\end{prop}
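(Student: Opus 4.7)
The plan is to exhibit $Y$ as the quotient $T/G$ from the construction of Theorem \ref{thm_curve}, and to identify the contraction from $Y$ to $X$ as the projection $pr_1 : Y \to X$. Let $pr_2 : Y \to X'$ be the second projection and set $p := p' \circ pr_2 : Y \to \p^1$. Since $pr_2$ is a birational proper morphism between normal projective threefolds, $pr_{2*}\sO_Y \cong \sO_{X'}$, so $p_*\sO_Y \cong p'_* \sO_{X'} \cong \sO_{\p^1}$ and $p$ is a fibration. Proposition \ref{iso-graph} gives $H^0(Y, (\Omega_Y^1)^{[\otimes m]}) \cong H^0(X, (\Omega_X^1)^{[\otimes m]})$, so $Y$ carries non-zero pluri-forms. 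Moreover, since $Y$ is a normal threefold and $\p^1$ is a smooth curve, $p$ is flat and hence equidimensional with fibers of pure dimension $2$.

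Next, I would verify the hypotheses of Lemma \ref{fibration-section} for $p$. Since the singular locus of $Y$ has codimension at least $2$, a general fiber $F$ of $p$ meets it in only finitely many points; combined with the Cohen-Macaulay property coming from flatness, this makes $F$ a normal projective surface. The restriction $pr_2|_F : F \to S$ onto a general fiber $S$ of $p'$ is a proper birational morphism whose inverse rational map contracts no divisor, since any prime divisor of $S$ avoids the codimension-two image of the exceptional locus. Because $p'$ is a Mori fibration from a $\mathbb{Q}$-factorial canonical threefold, $S$ is a normal Fano surface with canonical singularities, and Proposition \ref{vanish-fano} gives $H^0(S, (\Omega_S^1)^{[\otimes m]}) = \{0\}$ for $m > 0$. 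Lemma \ref{inj-mmp} applied to $pr_2|_F$ then yields $H^0(F, (\Omega_F^1)^{[\otimes m]}) \hookrightarrow H^0(S, (\Omega_S^1)^{[\otimes m]}) = \{0\}$, confirming that general fibers of $p$ carry no non-zero pluri-forms.

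With the hypotheses of Lemma \ref{fibration-section} in hand, the computation from the discussion preceding Theorem \ref{thm_curve}---using the projection formula, the identification $((p^*\Omega_{\p^1}^1)^{sat})^{[\otimes m]} \cong (p^*\Omega_{\p^1}^1)^{\otimes m} \otimes \sO_Y(mR)$ for the ramification divisor $R$ of $p$, and Lemma \ref{pushdown-form}---yields an isomorphism $H^0(Y, (\Omega_Y^1)^{[\otimes m]}) \cong H^0(\p^1, \sO_{\p^1}(-2m + \sum_{z \in \p^1}[\frac{(m(p,z)-1)m}{m(p,z)}]))$. Non-vanishing for some $m > 0$ forces $\sum_{z \in \p^1}\frac{m(p,z)-1}{m(p,z)} \geq 2$, and Remark \ref{rem-thm-curve} then furnishes a smooth curve $B$ of positive genus (via Riemann-Hurwitz) together with a finite Galois cover $p_B : B \to \p^1$ of group $G$, ramified exactly over the points $z$ with $m(p,z) > 1$ with ramification index $m(p,z)$. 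Taking $T$ to be the normalization of $Y \times_{\p^1} B$ produces a natural $G$-equivariant fibration $q : T \to B$ with $m(q,b) = 1$ for all $b \in B$ and $T/G \cong Y$; its general fibers coincide with general fibers of $p$ under base change, hence are rationally connected and carry no non-zero pluri-forms. Thus $(T,B,G,q)$ realizes $Y$ via the construction of Theorem \ref{thm_curve}, and $pr_1 : Y \to X$ is the required contraction. The most delicate point is checking that general fibers of $p$ are normal and carry no pluri-forms, where the key tool is comparing them to the Fano fibers of $p'$ via Lemma \ref{inj-mmp}.
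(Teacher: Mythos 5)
Your proposal is correct and follows essentially the same route as the paper's own proof: transfer pluri-forms from $X$ to $Y$ via Proposition \ref{iso-graph}, fiber $Y$ over $\p^1$ through $pr_2$ and $p'$, kill pluri-forms on general fibers by comparing them birationally to the Fano fibers of $p'$ via Proposition \ref{vanish-fano} and Lemma \ref{inj-mmp}, and then invoke Remark \ref{rem-thm-curve} to realize $Y$ as $T/G$, with $pr_1:Y\to X$ the contraction. One small correction: flatness of $p$ over a smooth curve does \emph{not} give the Cohen--Macaulay property you use to deduce normality of a general fiber $F$; however, this does not damage the argument, since in characteristic zero a general fiber of a fibration from a normal variety is normal by generic normality (the paper takes this for granted as well).
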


\begin{proof}
By  Proposition \ref{vanish-fano} and Lemma \ref{fibration-section}, we know that every element in $H^0(X',(\Omega_{X'}^1)^{[\otimes m]})$ comes from the base $\p^1$, so does every element in $H^0(X,(\Omega_X^1)^{[\otimes m]})$ by Lemma \ref{inj-mmp}. But we only have a rational map $X \dashrightarrow X' \to \p^1$. However, by Proposition \ref{iso-graph}, we know that $H^0(X,(\Omega_X^1)^{[\otimes m]}) \cong H^0(Y,(\Omega_Y^1)^{[\otimes m]})$. Moreover, we have a natural fibration $g:Y\to \p^1$ which is the composition of $Y \to X' \to \p^1$. If $F_g$ is a general fiber of $g$, then there is a birational morphism $F_g \to S$, where $S$ is a general fiber of $p':X' \to \p^1$. Since $S$ does not carry any non-zero pluri-form by Proposition \ref{vanish-fano},  neither does $F_g$ by Lemma \ref{inj-mmp}. From Remark \ref{rem-thm-curve}, we know that $Y$ can be constructed by the method of Theorem \ref{thm_curve}. After a contraction  from $Y$, we obtain the variety $X$.
\end{proof}

\section{Mori fibrations and non-zero pluri-form}
\label{dimZ=2}

In this section, we study relations between Mori fibrations and non-zero pluri-forms. We will consider Mori fibrations from a normal threefold to a normal surface. First, we recall the definition and some properties of slopes. Let $X$ be a normal projective
$\mathbb{Q}$-factorial variety. Let $\alpha$ be a class of $1$-cycles in $X$. For a coherent sheaf  $\mathscr{F}$ of positive rank, we define the slope $\mu_{\alpha}(\mathscr{F})$ of $\mathscr{F}$ with respect to $\alpha$ by
\begin{displaymath}
\mu_{\alpha}(\mathscr{F}) := \frac{\mathrm{det}(\mathscr{F}) \cdot \alpha}{\mathrm{rank}(\mathscr{F})},
\end{displaymath}  
where $\mathrm{det}(\mathscr{F})$ is the reflexive hull of $\bigwedge^{\mathrm{rank}\mathscr{F}}\mathscr{F}$. A class of movable curves is a class of $1$-cycles which has non negative intersection number with any pseudo-effective divisor. If $\alpha$ is a class of movable curves, then $\mu^{max}_{\alpha}(\mathscr{F}) = \mathrm{sup}\{\mu_{\alpha}(\mathscr{G}) \ |\ \mathscr{G} \subseteq \mathscr{F} \ \mathrm{a \ 
coherent \ subsheaf \ of \ positive \ rank}\}$ is well defined (see \cite[p.42]{MP97}). For any coherent sheaf $\mathscr{F}$, there is a saturated coherent subsheaf $\mathscr{G} \subseteq \mathscr{F}$ such
that  $\mu^{max}_{\alpha}(\mathscr{F}) = \mu_{\alpha}(\mathscr{G})$. If $\sE$ and $\sF$ are two coherent sheaves of positive rank, then $\mu_{\alpha}(\sF \otimes \sE) = \mu_{\alpha}(\sF)+\mu_{\alpha}(\sE)$.

\subsection{General properties}

Consider a Mori fibration $p : X \to Z$ from a normal rationally connected threefold to a normal variety $Z$ of positive dimension. Assume that $X$ has $\mathbb{Q}$-factorial klt singularities. Let $D_1,...,D_k$ be all prime Weil divisors in $Z$ such that $m_i=m(p,D_i)>1$. Let $\D=\sum_{i=1}^k \frac{m_i-1}{m_i}D_i$. Then det$((p^*\Omega_Z^1)^{sat}) \cong \sO_X(p^*(K_Z+\D))$ (see Remark \ref{rem-pb-can-div}). Assume that $X$ carries non-zero pluri-forms and general fibers of $p$ do not carry any non-zero pluri-form. If dim$Z=1$, then $K_Z+\D$ is an effective $\mathbb{Q}$-divisor of degree $-2+\sum_{i=1}^k \frac{m_i-1}{m_i}$ on $\p^1$ (see \S 3). The aim of this subsection is to prove something analogue in the case where dim$Z=2$. We prove that if $K_Z+\D$ is not pseudo-effective, then there will be a fibration form $Z$ to $\p^1$. In this case, we have an induced fibration $X\to \p^1$ and we are in the same situation as in \S3 (see Lemma \ref{non-pf-pic-2}). In order to do this, we will run a MMP for the pair $(Z,\D)$. To this end, we prove the following proposition which implies that the pair $(Z,\D)$ is klt.

\begin{prop}
\label{ram-is-klt}
Let $p :X\to Z$ be a Mori fibration from a $\mathbb{Q}$-factorial klt quasi-projective variety $X$ to a normal variety $Z$. Let $D_1,...,D_k$ be pairwise distinct prime Weil divisors in $Z$ such that  $p^*D_i = m_i(p^*D_i)_{red}$ with $m_i \geqslant 2$. Then the pair $(Z,\sum_{i=1}^k \frac{m_i-1}{m_i}D_i)$ is klt.
\end{prop}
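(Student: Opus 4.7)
The coefficients $\tfrac{m_i-1}{m_i}$ of $\Delta$ all lie strictly below $1$, so it suffices to prove that every prime divisor $E$ over $Z$ has discrepancy $a_E(Z,\Delta) > -1$. The plan is to deduce this from the klt property of $X$ via the canonical bundle formula. Since $-K_X$ is $p$-ample and $(X,0)$ is klt, a Bertini-type argument applied to a sufficiently positive multiple of $-K_X$ in a relative linear system produces an effective $\mathbb{Q}$-divisor $B \sim_{\mathbb{Q},p} -K_X$ such that $(X,B)$ is klt and $B$ shares no component with any $F_i := (p^*D_i)_{\text{red}}$. Then $K_X + B \sim_{\mathbb{Q},p} 0$, and the canonical bundle formula of Ambro and Fujino furnishes an effective $\mathbb{Q}$-divisor $B_Z$ on $Z$ with $(Z,B_Z)$ klt, whose coefficient along any prime divisor $D$ in $Z$ equals $1 - l_D$, where $l_D$ denotes the log canonical threshold of $p^*D$ with respect to $(X,B)$ computed at the generic point of $D$.

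Next I would compute $l_{D_i}$ explicitly. At the generic point of $D_i$ the surface $Z$ is smooth (being normal in codimension one), $X$ is likewise smooth (klt implies smooth in codimension one by Remark \ref{rem-sing-codim-2}), $F_i$ is a smooth prime Cartier divisor on $X$, and the coefficient of $B$ along $F_i$ is zero by construction. Since $p^*D_i = m_i F_i$, one finds $l_{D_i} = 1/m_i$, whence the coefficient of $D_i$ in $B_Z$ equals $\tfrac{m_i-1}{m_i}$. Combined with effectivity of $B_Z$, this gives the inequality $B_Z \geq \Delta$ of $\mathbb{Q}$-divisors; as $(Z,B_Z)$ is klt and $0 \leq \Delta \leq B_Z$, the pair $(Z,\Delta)$ is also klt.

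The main obstacle is invoking the canonical bundle formula in the form needed: one must know that the discriminant $B_Z$ is honestly effective and that $(Z,B_Z)$ is klt rather than merely sub-klt or generalised klt, which is the content of Ambro's theorem on klt pairs with relatively trivial log canonical class. A secondary technical point is the Bertini selection of $B$: ensuring $B$ avoids each $F_i$ so that the local computation of $l_{D_i}$ proceeds cleanly. In the setting actually used in Section \ref{dimZ=2}, where $p$ is a Mori fibration from a threefold to a normal surface, the relative dimension is one and these issues simplify markedly, so that a direct comparison of discrepancies on a common resolution of $X \times_Z \bar Z$, combined with the formula $\det((p^*\Omega_Z^1)^{\text{sat}}) \cong \sO_X(p^*(K_Z+\Delta))$ alluded to above, would also suffice.
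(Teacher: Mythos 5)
Your strategy is sound, but it is a genuinely different route from the paper's, and it has two patchable gaps. The paper never invokes the discriminant form of the canonical bundle formula: it first records that $Z$ is $\mathbb{Q}$-factorial (\cite[Lem. 5-1-5]{KMM85}), reduces to $Z$ affine, and builds a finite cover $c_k\colon Z_k\to Z$ --- an index-one cover followed by cyclic root covers --- ramified exactly over the $D_i$ with index $m_i$, so that $K_{Z_k}=c_k^*(K_Z+\Delta)$ where $\Delta=\sum\frac{m_i-1}{m_i}D_i$. Because the ramification indices match the multiplicities of $p^*D_i$, the normalised fibre product $X_k\to X$ is \'etale in codimension one, so $X_k$ is klt and $X_k\to Z_k$ is a Fano fibration; at that point the \emph{coefficient-free} statement \cite[Cor. 4.7]{Fuj99} (some boundary makes $Z_k$ klt) suffices, since $\mathbb{Q}$-Cartierness of $K_{Z_k}$ lets one drop that boundary by \cite[Cor. 2.35]{KM98}, and klt-ness then descends through the ramification formula via \cite[Prop. 5.20]{KM98}. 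You instead keep the fibration as it stands and demand coefficient control on the base boundary, which forces you to use the stronger theorem of Ambro (or Fujino--Gongyo) on klt-trivial fibrations. Your route is more direct, avoids all covers, and in fact proves a bit more (the full discriminant pair on $Z$ is klt); the paper's route gets by with a weaker and older input that is stated relatively from the outset, so it applies verbatim in the quasi-projective setting.

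The gaps. First, your final step --- ``$(Z,B_Z)$ klt and $0\le\Delta\le B_Z$ imply $(Z,\Delta)$ klt'' --- is monotonicity of discrepancies, which requires $K_Z+\Delta$ to be $\mathbb{Q}$-Cartier; without this the conclusion does not even make sense. This is exactly why the paper begins with \cite[Lem. 5-1-5]{KMM85}: the base of a Mori fibration from a $\mathbb{Q}$-factorial variety is again $\mathbb{Q}$-factorial. You never address it. Second, what Ambro's theorem makes klt is not the discriminant $B_Z$ itself but $B_Z$ plus an effective representative of the moduli part; this is harmless --- all you need is \emph{some} effective klt boundary dominating $\Delta$ --- but the theorem should be quoted in that form, and its published versions assume projectivity, whereas $X$ and $Z$ here are only quasi-projective (klt-ness is local on $Z$, so one works over an affine chart, but then a relative version of the formula, or a compactification, must be cited). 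Two smaller points: only the inequality $l_{D_i}\le 1/m_i$ is needed, and that follows from the codimension-one computation at the generic point of \emph{any} component of $p^*D_i$ (each has coefficient $m_i$ by hypothesis); your claimed equality $l_{D_i}=1/m_i$ would require controlling deeper strata by inversion of adjunction and is unnecessary, as is the Bertini requirement that $B$ avoid the $F_i$. Finally, smoothness of $X$ in codimension one is just normality of $X$; Remark \ref{rem-sing-codim-2} concerns terminal and canonical singularities in codimension two and is not the right citation.
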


\begin{proof}
Let $D=D_1+\cdots +D_k$. By \cite[Lem. 5-1-5]{KMM85}, $Z$ is $\mathbb{Q}$-factorial. Notice that the problem is local in $Z$, we may assume that $Z$ is affine. Let $k_1$ be the smallest positive integer such that $k_1D_1$ is a Cartier divisor. By taking the $k_1$-th root of the function defining the Cartier divisor $k_1D_1$, we can construct a finite covering $c_{1,1}: Z_{1,1}\to Z$ which is \'etale in codimension $1$. Moreover, $c_{1,1}^*D_1$ is a prime Cartier divisor (see \cite[Prop.-Def. 1.11]{Mor88}). By taking the $m_1$-th root of the function defining the Cartier divisor $c_{1,1}^*D_1$, we can find a finite covering $c_{1,2}:Z_1 \to Z_{1,1}$ which is ramified exactly over $D_1$ with ramification degree $m_1$ (see \cite[\S 4.1B]{Lar04}). Let $c_1=c_{1,2} \circ c_{1,1}:Z_1 \to Z$. Then $c_1^*D_1=m_1(c_1^*D_1)_{red}$. Repeating this construction for the strict transform of $D_2$ in $Z_1$, we have a finite morphism $Z_2\to Z_1$ which is ramified over the strict transform of $D_2$ in $Z_1$ with degree $m_2$. Let $c_2:Z_2 \to Z$ be the composition of $Z_2\to Z_1 \to Z$. Then $c_2$ is ramified over $D_1+D_2$ and $c_2^*D_i=m_i(c_1^*D_i)_{red}$ for $i=1,2$. By induction, we can construct a finite morphism $c_k:Z_k \to Z$  which is ramified over $D$ such that $c_k^*D_i=m_i(c_1^*D_i)_{red}$ for all $i$. We have $K_{Z_k} = c_k^*(K_Z+\sum_{i=1}^k \frac{m_i-1}{m_i}D_i)$ is $\mathbb{Q}$-Cartier (for the proof of this formula, see Lemma \ref{calcul-multi-div} below). Let $X_k$ be the normalisation of $X\times_Z Z_k$. Then the natural projection $c_X:X_k \to X$ is \'etale in codimension 1 and $K_{X_k}=c_X^*K_X$. Hence $X_k$ is klt by \cite[Prop. 5.20]{KM98}. Moreover, $X_k \to Z_k$ is a Fano fibration since $X\to Z$ is a Mori fibration. Hence there is a $\mathbb{Q}$-divisor $\D_k$ such that the pair $(Z_k, \D_k)$ is klt by \cite[Cor. 4.7]{Fuj99}. Since $K_{Z_k}$ is $\mathbb{Q}$-Cartier, $Z_k$ is klt (see \cite[Cor. 2.35]{KM98}). So the pair $(Z,\sum_{i=1}^k \frac{m_i-1}{m_i}D_i)$ is also klt by \cite[Prop. 5.20]{KM98}.
\end{proof}

In the remaining of this subsection, our aim is to prove the following theorem.

\begin{thm}
\label{dim2-picard-1}
Let $p : X \to Z$ be a Mori fibration from a projective normal threefold to a normal projective surface. Assume that $X$ has $\mathbb{Q}$-factorial klt singularities. Let $D_1,...,D_k$ be all prime Weil divisors in $Z$ such that $m_i=m(p,D_i)>1$. Let $\D=\sum_{i=1}^k \frac{m_i-1}{m_i}D_i$. If $X$ carries non-zero pluri-forms and $K_Z+\D$ is not pseudo-effective, then the result $Z'$ of any MMP for the pair $(Z,\D)$ has Picard number $2$.
\end{thm}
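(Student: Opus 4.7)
The strategy is to run a $(K_Z+\D)$-MMP on the pair $(Z,\D)$ and analyze the resulting Mori fibration. By Proposition~\ref{ram-is-klt} the pair $(Z,\D)$ is klt, so such an MMP is available and terminates because $\dim Z = 2$ (surface MMPs consist only of divisorial contractions). Since $K_Z+\D$ is not pseudo-effective, it produces a birational morphism $\phi:Z \to Z'$ followed by a Mori fibration $q: Z' \to W$ with $\dim W < 2$. Because $X$ is rationally connected and $p$ is surjective, $Z$, $Z'$, and $W$ are all rationally connected; if $\dim W = 1$, this forces $W \cong \p^1$, so $\rho(W) = 1$ and $\rho(Z') = \rho(W)+1 = 2$, giving the conclusion. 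It therefore suffices to rule out the case $\dim W = 0$.

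Suppose for contradiction that $W$ is a point, so $\rho(Z') = 1$ and $-(K_{Z'}+\D')$ is ample, where $\D' = \phi_*\D$. Since general fibers of $p$ are $\p^1$ (as $p$ is a Mori fibration from a threefold to a surface), Lemma~\ref{fibration-section} gives $H^0(X,(\Omega_X^1)^{[\otimes m]}) \cong H^0(X, ((p^*\Omega_Z^1)^{\textup{sat}})^{[\otimes m]})$, which is non-zero by hypothesis; and by the definition of $\D$, $\det((p^*\Omega_Z^1)^{\textup{sat}}) \cong \sO_X(p^*(K_Z+\D))$. Pick a movable class $\beta$ on $Z$ with $(K_Z+\D)\cdot \beta < 0$ (which exists because $K_Z+\D$ is not pseudo-effective) and lift it to a movable curve class $\alpha$ on $X$ with $p_*\alpha$ a positive multiple of $\beta$; then $\mu_\alpha((p^*\Omega_Z^1)^{\textup{sat}}) < 0$. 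On the other hand, each non-zero pluri-section produces an inclusion $\sO_X \hookrightarrow ((p^*\Omega_Z^1)^{\textup{sat}})^{[\otimes m]}$ whose rank-$1$ saturation is an effective reflexive subsheaf, forcing $\mu_\alpha^{\max}((p^*\Omega_Z^1)^{\textup{sat}}) \ge 0$. Hence $(p^*\Omega_Z^1)^{\textup{sat}}$ admits a rank-$1$ saturated destabilizing subsheaf $\sL$ with $\mu_\alpha(\sL) \ge 0$.

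Outside a codimension-$\ge 2$ locus, $(p^*\Omega_Z^1)^{\textup{sat}}$ is pulled back from $Z$, so $\sL$ descends to a rank-$1$ saturated subsheaf $\sM \subset \Omega_Z^1$, i.e.\ a foliation on $Z$. The plan is to show that $\sM$ is algebraically integrable, yielding a rational fibration $Z \dashrightarrow C$ onto a curve. Since $Z$ is rationally connected we must have $C \cong \p^1$, and this rational map transfers through $\phi$ to a non-constant rational map $Z' \dashrightarrow \p^1$, contradicting $\rho(Z') = 1$. The hard part will be establishing algebraic integrability of $\sM$: the positivity of $\sL$ is measured against a movable class on $X$, whereas integrability is a statement about $\sM$ on $Z$, so one must either translate slope information carefully through the Mori fibration $p$ and invoke a Bogomolov--McQuillan or Campana--P\u{a}un type theorem on $Z$, or else argue more directly using the equidimensionality of $p$ and the explicit form of the pluri-section.
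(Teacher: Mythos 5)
Your setup (reduce to ruling out $\dim W=0$, use Lemma~\ref{fibration-section} to see the forms come from $(p^*\Omega_Z^1)^{sat}$, extract a rank-one saturated subsheaf of non-negative slope against a movable class) matches the paper's opening moves. But the endgame is broken: even if you establish algebraic integrability of your foliation, the conclusion you draw --- ``a non-constant rational map $Z'\dashrightarrow \p^1$, contradicting $\rho(Z')=1$'' --- is simply false. A normal projective surface of Picard number one can carry plenty of non-constant rational maps to $\p^1$: the pencil of lines through a point on $\p^2$ is an algebraically integrable foliation with rational leaves, and the closures of its fibers all meet at the indeterminacy point, so no disjointness/intersection-number contradiction is available. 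A rational (as opposed to regular) fibration onto a curve is compatible with $\rho=1$, so the mere existence of an algebraically integrable foliation with rational leaves on $Z'$ cannot finish the proof. The information you are discarding is precisely what the paper exploits: not just that a destabilizing subsheaf $\sH\subset (p^*\Omega_Z^1)^{sat}$ exists, but that it appears inside a \emph{global} pluri-form.

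Concretely, the paper proceeds as follows after finding $\sH$ with $\mu_\alpha(\sH)\geqslant 0$: using the filtration of Lemma~\ref{long-filtration} it gets $H^0(X,\sH^{\otimes s}[\otimes]\sJ^{\otimes t})\neq\{0\}$ with $s>t$, which forces $\sH\cdot F_p=0$ on general fibers, so $\sH^{[\otimes k]}\cong\sO_X(p^*L)$ for a divisor $L$ on $Z$ (this fiber-degree-zero step is also what justifies the descent to $Z$ that you assert without proof). Then Lemma~\ref{subsheave-Kor-dim} (a Bogomolov--Sommese argument after a cyclic cover) rules out $f_{Z*}L$ being ample, so on the $\rho=1$ surface $Z'$ it must be numerically trivial, hence torsion; the associated cyclic cover $W_0\to X_0$ is \'etale in codimension one and produces $H^0(W_0,\Omega_{W_0}^{[1]})\neq\{0\}$. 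Finally, Stein factorization plus Lemma~\ref{base-normalisation} exhibits a klt log Fano surface $V$ dominating $Z'$, which is rationally connected, so Theorem~\ref{klt-thmb} together with Lemma~\ref{picard-1-diff} forces $H^0(W_0,\Omega_{W_0}^{[1]})=\{0\}$ --- the contradiction. Note that this route never needs algebraic integrability (your acknowledged ``hard part,'' which you leave as a plan rather than a proof); instead it converts non-negative slope plus effectivity into a torsion class and then contradicts vanishing on a log Fano cover. To repair your proposal you would need both to prove integrability (say via a Campana--P\u{a}un type theorem applied to the dual foliation, which does have strictly positive slope) \emph{and} to replace the false Picard-number contradiction with an argument that genuinely uses the global section --- at which point you are essentially reconstructing the paper's proof.
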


First we would like to illustrate the idea of the proof in a simple case. Let $f_Z:(Z,\D)\to (Z',\D')$ be the result of a MMP for the pair $(Z,\D)$. Assume in a first stage that $Z=Z'$. We will argue by contradiction. Assume that $Z'$ has Picard number $1$.  Since general fibers of $p$ are isomorphic to $\p^1$, they don't carry any non-zero pluri-form. By Lemma \ref{fibration-section}, the non-zero pluri-forms of $X$ come from $(p^*\Omega_Z^1)^{sat}$. Since $K_Z+\Delta$ is not pseudo-effective, we can prove that there is a rank $1$ reflexive subsheaf $\sH$ of $(p^*\Omega_Z^1)^{sat}$  such that $\sH^{[\otimes l]}$ is an invertible sheaf which has non-zero global sections for some positive integer $l$. Next, we can prove that $\sH^{[\otimes l]}$ is isomorphic to $\sO_X$ under the assumption that Pic$_{\mathbb{Q}}Z=\mathbb{Q}$ (by using Lemma \ref{subsheave-Kor-dim}). Let $W\to X$ be the cyclic cover with respect to $\sH$ and $l$. Let $W \to V \to Z$ be the Stein factorisation, then $V$ will be a klt log Fano variety (by Lemma \ref{base-normalisation}). Moreover $H^0(V,\Omega_V^{[1]})\neq \{0\}$ (we use Lemma \ref{picard-1-diff}). This contradicts Theorem \ref{klt-thmb}. 

However, in general $Z'$ is different from $Z$ and the fibration $X \to Z'$ is not equidimensional. The idea of the proof will be the same as above but details will be more complicated. We will work over an open subset $Z_0$ of $Z$ such that $f_Z|_{Z_0}$ is an isomorphism and codim $Z'\backslash f_Z(Z_0) \geqslant 2$. For the complete proof of the theorem, we will need several lemmas.

\begin{lemma}
\label{calcul-multi-div}
Let $p:X\to Z$ be an equidimensional morphism between smooth varieties. Let $n$ be the dimension of $X$ and $d$ be the dimension of $Z$. Let $D$ be a prime divisor in $Z$ and $E$ be a prime divisor in $X$ such that $E$ is a component of $p^*D$. Assume that the coefficient of $E$ in $p^*D$ is $k$. Then for any general point $x\in E$, there is an open neighbourhood $U\subseteq X$ of $x$ such that $(p^*\Omega_Z^d)^{sat}|_U \cong \sO_X(p^*K_Z+(k-1)E)|_U$, where  $(p^*\Omega_Z^d)^{sat}$ is the saturation of $p^*\Omega_Z^d$ in $\Omega_X^d$.
\end{lemma}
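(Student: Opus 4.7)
The plan is to reduce the statement to a local computation in carefully chosen coordinates at a general point $x\in E$. Since $X$ and $Z$ are smooth, I would shrink to a Zariski open $U\subseteq X$ containing $x$ and a corresponding open $V\subseteq Z$ containing $p(x)$ so that (i) no component of $p^{*}D$ other than $E$ meets $U$, (ii) $D\cap V$ is cut out by a single regular function $z_1$ which is part of a regular system of parameters $z_1,\ldots,z_d$ on $V$, and (iii) $E\cap U$ is cut out by a function $x_1$ where $x_1,\ldots,x_n$ form a system of parameters on $U$ trivializing $\Omega^{1}_X|_U$. Under these choices, the multiplicity hypothesis translates into an equation $p^{*}z_1 = x_1^{k}\,v$ with $v\in\sO_X(U)^{\times}$.

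A direct differentiation then gives
$$p^{*}(dz_1\wedge\cdots\wedge dz_d) \;=\; \bigl(kv\,x_1^{k-1}\,dx_1 + x_1^{k}\,dv\bigr)\wedge p^{*}dz_2\wedge\cdots\wedge p^{*}dz_d \;=\; x_1^{k-1}\,\tau,$$
where $\tau := (kv\,dx_1 + x_1\,dv)\wedge p^{*}dz_2\wedge\cdots\wedge p^{*}dz_d$. Hence the image of $p^{*}\Omega^{d}_Z$ in $\Omega^{d}_X$ is locally $\sO_U\cdot(x_1^{k-1}\tau)$, and proving the lemma reduces to showing that $\sO_U\cdot\tau$ is itself the saturation of this image. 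It is enough to check that $\tau$ does not vanish at $x$, since a nowhere-zero section of the locally free sheaf $\Omega^{d}_X$ generates a local direct summand, and hence a saturated rank-$1$ subsheaf.

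The main obstacle is this non-vanishing claim. Evaluating at $x$ we have $\tau(x)=kv(x)\,dx_1\wedge p^{*}dz_2\wedge\cdots\wedge p^{*}dz_d|_x$ (the $x_1\,dv$ term vanishes on $E$), so since $v$ is a unit, non-vanishing amounts to the linear independence of $dx_1, p^{*}dz_2,\ldots, p^{*}dz_d$ in $T^{*}_x X$. As $dx_1$ spans the conormal line of $E$, it suffices to show that $p^{*}dz_2|_E,\ldots,p^{*}dz_d|_E$ are independent at $x\in E$, i.e.\ that $p|_E\colon E\to D$ is smooth at $x$. A dimension count using the equidimensionality of $p$ (every fiber of $p$ has pure dimension $n-d$) forces $p|_E$ to dominate $D$: otherwise a general fiber of $p|_E$ would have dimension strictly larger than $n-d$, contradicting its inclusion in a fiber of $p$. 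Generic smoothness of a dominant morphism in characteristic zero then yields the claim on a dense open subset of $E$, which is all that is needed since $x$ is general.

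Finally, the identification with $\sO_X(p^{*}K_Z+(k-1)E)|_U$ is formal bookkeeping: on $U$ the invertible sheaf $\sO_X(p^{*}K_Z)|_U = p^{*}\Omega^{d}_Z|_U$ is generated by $x_1^{k-1}\tau$, so tensoring with $\sO_X((k-1)E)|_U$, which is locally $x_1^{-(k-1)}\sO_U$, produces an invertible sheaf with local generator $\tau$; this coincides with $(p^{*}\Omega^{d}_Z)^{sat}|_U$ by the previous step.
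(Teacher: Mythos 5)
Your proof is correct and follows essentially the same route as the paper's: a local coordinate computation at a general point of $E$ showing that the image of $p^*\Omega_Z^d \to \Omega_X^d$ is $x_1^{k-1}$ times a nowhere-vanishing $d$-form, whose $\sO_U$-span is then the saturation. If anything, your version is more careful than the paper's, which simply asserts the existence of coordinates putting $p$ in the normal form $(a_1,\ldots,a_n)\mapsto(a_1^k,a_2,\ldots,a_d)$ --- an assertion that implicitly requires exactly the generic smoothness of $p|_E \to D$ that you prove via the equidimensionality dimension count, plus an extraction of a $k$-th root of the unit $v$ (only available analytically or \'etale-locally) that your argument avoids entirely.
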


\begin{proof}
We may assume that $E$ is smooth around $x$ and $D$ is smooth around $p(x)$. There exist local coordinates $(a_1,a_2,...,a_n)$ and $(b_1,b_2,...,b_d)$ of $X$ and $Z$ around $x$ and $p(x)$ such that $E$ is defined by $\{a_1=0\}$, $D$ is defined by $\{b_1=0\}$ and $p$ is given by $(a_1,a_2,...,a_n) \mapsto (a_1^k,a_2,...,a_d)$. With these coordinates, the natural morphism $p^*\Omega_Z^1 \to \Omega_X^1$ is given by $(\mathrm{d}b_1,\mathrm{d}b_2,...,\mathrm{d}b_d) \mapsto (ka_1^{k-1}\mathrm{d}a_1,\mathrm{d}a_2,...,\mathrm{d}a_d)$ and the image of the natural morphism $p^*\Omega_Z^d \to \Omega_X^d$ is generated by $p^*(\mathrm{d}b_1\wedge \mathrm{d}b_2\wedge \cdots \wedge \mathrm{d}b_d)=(ka_1^{k-1}\mathrm{d}a_1)\wedge \mathrm{d}a_2 \wedge \cdots \wedge \mathrm{d}a_d$. Hence $(p^*\Omega_Z^d)^{sat}$ is generated by $\mathrm{d}a_1\wedge \mathrm{d}a_2 \wedge \cdots \wedge \mathrm{d}a_d$. Since $\{a_1=0\}$ defines the divisor $E$, we have $(p^*\Omega_Z^d)^{sat}|_U \cong \sO_X(p^*K_Z+(k-1)E)|_U$ for some open neighbourhood $U$ of $x$.
\end{proof}

\begin{lemma}
\label{pullback-can-div}
Let $p:X\to Z$ be an equidimensional morphism between normal varieties. Assume that $Z$ is $\mathbb{Q}$-factorial. Let $D_1,...,D_r$ be all the prime divisors in $Z$ such that  $m(p,D_i)$ is larger than $1$. Write $\mathrm{det}((p^*(\Omega_Z^1))^{sat}) \cong \sO_X(M)$ where $M$ is a divisor on $X$, then $M - (p^*(K_Z+\sum_{i=1}^{r}\frac{m(p,D_i)-1}{m(p,D_i)}D_i))$ is effective.
\end{lemma}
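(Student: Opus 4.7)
The strategy is a divisor-by-divisor comparison. Since $X$ is normal and both quantities are $\mathbb{Q}$-Weil divisors, it suffices to verify the inequality of coefficients at the generic point of every prime divisor $E \subseteq X$. Restricting to the smooth locus, I may further assume that $X$ and $Z$ are smooth around a general point $x$ of $E$. Equidimensionality forces $p(E)$ to be either $Z$ itself or a prime divisor $D \subseteq Z$, since otherwise the fiber containing $E$ would have dimension strictly greater than $\dim X - \dim Z$.

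The first step is to identify $\det\bigl((p^*\Omega_Z^1)^{sat}\bigr)$ with $(p^*\Omega_Z^d)^{sat}$ in a neighbourhood of $x$. In the local coordinates used in the proof of Lemma \ref{calcul-multi-div}, where $p$ is either smooth or has the form $(a_1, \ldots, a_n) \mapsto (a_1^k, a_2, \ldots, a_d)$ along $E$, the saturation $(p^*\Omega_Z^1)^{sat}$ is locally freely generated by $\mathrm{d}a_1, \ldots, \mathrm{d}a_d$, so its top exterior power coincides with $(p^*\Omega_Z^d)^{sat} = \langle \mathrm{d}a_1 \wedge \cdots \wedge \mathrm{d}a_d \rangle$. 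Granted this identification, Lemma \ref{calcul-multi-div} yields the local expression $M \sim p^*K_Z + (k-1)E$ around $x$, where $k \geq 1$ is the multiplicity of $E$ in $p^*D$ (whenever $p(E) = D$ is a divisor).

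Next, I carry out the case analysis. If $E$ dominates $Z$, then $p$ is smooth at $x$ after shrinking, so $(p^*\Omega_Z^1)^{sat}$ agrees with $p^*\Omega_Z^1$, the coefficient of $E$ in $M$ equals that in $p^*K_Z$, and no $D_j$-term contributes along $E$. If $p(E) = D_i$ for some $i$, then by the definition of $m_i$ we have $k \geq m_i$, and the required coefficient-wise inequality $(k-1) \geq \tfrac{m_i-1}{m_i}\, k$ rearranges to $k \geq m_i$, which holds. Finally, if $p(E) = D$ is a prime divisor not among $\{D_1, \ldots, D_r\}$, then $m(p,D) = 1$, no $D_j$-term contributes along $E$, and the remaining coefficient $k-1$ is non-negative. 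In all three cases the coefficient of $E$ in $M - p^*\bigl(K_Z + \sum_i \tfrac{m_i-1}{m_i}\, D_i\bigr)$ is non-negative, whence the effectivity.

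The only genuinely delicate point is the identification of the determinant of the saturation with the saturation of the top exterior power, which is what allows one to plug Lemma \ref{calcul-multi-div} directly into the computation of $M$; once this is in hand, an elementary arithmetic inequality finishes the proof.
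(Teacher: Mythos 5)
Your proof is correct and follows essentially the same route as the paper's: both reduce to the local computation of Lemma \ref{calcul-multi-div} at general points of prime divisors of $X$ (after discarding a codimension-two subset where $X$ or $Z$ may be singular) and conclude with the arithmetic inequality $k-1 \geqslant \frac{m_i-1}{m_i}k \Leftrightarrow k \geqslant m_i$. If anything, your write-up is slightly more careful than the paper's, which states the linear equivalence $M \sim p^*K_Z + \sum_{i,j}(n_{i,j}-1)E_{i,j}$ without mentioning the (harmless, since effective) contributions from non-reduced components lying over prime divisors $D \notin \{D_1,\ldots,D_r\}$ with $m(p,D)=1$, and which leaves implicit both the identification $\det\bigl((p^*\Omega_Z^1)^{sat}\bigr) \cong (p^*\Omega_Z^d)^{sat}$ that you rightly single out as the delicate point and the generic-smoothness argument showing that divisors dominating $Z$ contribute nothing.
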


\begin{proof}
By Proposition \ref{refle-cod-2}, we only have to prove the assertion on an open subset of $X$ whose complement is of codimension at least $2$. Hence we may assume that both $X$ and $Z$ are smooth and that $\sum_{i=1}^rD_i$ is smooth. In this case, by Lemma \ref{calcul-multi-div}, the divisor  $M$ is linearly equivalent to $p^*K_Z + \sum_{i=1}^{r}\sum_{j=1}^{s_i}(n_{i,j}-1)E_{i,j}$ where the $E_{i,1},...,E_{i,s_i}$ are the components of $p^*D_i$ and $n_{i,j}$ is the coefficient of $E_{i,j}$ in $p^*D_i$. Since $m(p,D_i)$ is the smallest integer among $n_{i,1},...,n_{i,s_i}$, we have $n_{i,j}-1\geqslant \frac{m(p,D_i)-1}{m(p,D_i)} \cdot n_{i,j}$. Thus $\sum_{i=1}^{r}\sum_{j=1}^{s_i}(n_{i,j}-1)E_{i,j} \geqslant \sum_{i=1}^r\frac{m(p,D_i)-1}{m(p,D_i)}p^*D_i$.
\end{proof}

\begin{rem}
\label{rem-pb-can-div}
With the notation above, if in addition $p$ is a Mori fibration, then $p^*D_i$ is irreducible for all $i$ since $p$ has relative Picard number 1. In this case we obtain that $\mathrm{det}((p^*(\Omega_Z^1))^{sat})$ is isomorphic to $\sO_X(p^*(K_Z+\sum_{i=1}^{r}\frac{m(p,D_i)-1}{m(p,D_i)}D_i))$.
\end{rem}

\begin{lemma} 
\label{push-down-2}
Let $p:X\to Z$ be an equidimensional fibration between normal varieties. Let $(p^*\Omega_Z^1)^{sat}$ be the saturation of the image of $p^*\Omega_Z^1$ in $\Omega^{[1]}_X$. Then $p_*(((p^*\Omega_Z^1)^{sat})^{[\wedge r]}) \cong \Omega_Z^{[r]}$ for $r>0$.
\end{lemma}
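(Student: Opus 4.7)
Set $\mathcal{G} = ((p^*\Omega_Z^1)^{sat})^{[\wedge r]}$. The plan is to build a canonical comparison map $\phi: \Omega_Z^{[r]} \to p_*\mathcal{G}$, prove both sides are reflexive on $Z$, and verify $\phi$ is an isomorphism outside a closed set of codimension at least $2$.

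First I would show that $p_*\mathcal{G}$ is reflexive. Since $\mathcal{G}$ is reflexive on $X$, I apply the criterion of Proposition \ref{refle-cod-2}: for any open $U \subseteq Z$ and closed $Y \subseteq U$ of codimension at least $2$, equidimensionality of $p$ forces $p^{-1}(Y) \subseteq p^{-1}(U)$ to have codimension at least $2$, whence
\[
\Gamma(U, p_*\mathcal{G}) = \Gamma(p^{-1}(U), \mathcal{G}) = \Gamma(p^{-1}(U) \setminus p^{-1}(Y), \mathcal{G}) = \Gamma(U \setminus Y, p_*\mathcal{G}).
\]
Torsion-freeness of $p_*\mathcal{G}$ follows from torsion-freeness of $\mathcal{G}$ together with the integrality of $X$.

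Next I construct $\phi$. On $Z_0 = Z_{\mathrm{sm}}$, which has complement of codimension at least $2$ in $Z$, the sheaf $\Omega_{Z_0}^r$ is locally free. The inclusion $p^*\Omega_Z^1 \hookrightarrow (p^*\Omega_Z^1)^{sat}$ induces, after taking $r$-th wedge products and passing to reflexive hulls, a map $p^*\Omega_{Z_0}^r \to \mathcal{G}|_{p^{-1}(Z_0)}$. Using the projection formula and the fibration hypothesis $p_*\mathcal{O}_X = \mathcal{O}_Z$, its adjoint yields $\phi_0: \Omega_{Z_0}^r \to (p_*\mathcal{G})|_{Z_0}$, which extends uniquely to $\phi: \Omega_Z^{[r]} \to p_*\mathcal{G}$ by reflexivity of both sides.

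The main work is showing $\phi$ is an isomorphism. On the open locus $V \subseteq Z_0$ over which $p$ is unramified, $\mathcal{G}|_{p^{-1}(V)} = p^*\Omega_V^r$ and the projection formula gives $\phi|_V$ an isomorphism immediately. It remains to verify $\phi$ at the generic point of each ramification divisor $D_i \subseteq Z$ of multiplicity $k = m(p, D_i)$. Near a general point of a component $E$ of $p^*D_i$ lying in the smooth loci of both $X$ and $Z$, choose coordinates so that $p$ has the form $(a_1, \ldots, a_n) \mapsto (a_1^{k}, a_2, \ldots, a_d)$. A direct calculation (compare with Lemma \ref{calcul-multi-div}) identifies $\mathcal{G}$ locally as the free module generated by $\{da_I : |I| = r\}$, while $p^*\Omega_Z^r$ is generated by $\{da_I : 1 \notin I\} \cup \{k a_1^{k-1}\, da_1 \wedge da_J : |J| = r-1,\ J \subseteq \{2, \ldots, d\}\}$. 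The fibration hypothesis $p_*\mathcal{O}_X = \mathcal{O}_Z$ forces the coefficients of any global section of $\mathcal{G}$ over $p^{-1}$ of a neighborhood of $D_i$ to be pullbacks of functions on $Z$; this constraint, applied to the ``extra'' generators $da_1 \wedge da_J$, forces them to be divisible by $a_1^{k-1}$ at the level of global sections, so that they already come from $p^*\Omega_Z^r$. Consequently $\phi$ is an isomorphism at the generic point of $D_i$ as well, hence on an open of $Z$ with complement of codimension at least $2$. By reflexivity of both sides, $\phi$ is an isomorphism on all of $Z$.

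The main obstacle is this last local analysis at generic points of the ramification divisors: although locally $\mathcal{G}$ has strictly more sections than $p^*\Omega_Z^r$, one must exploit the global fibration hypothesis $p_*\mathcal{O}_X = \mathcal{O}_Z$ — which behaves as a Galois-type averaging across the components of multiple fibers, as illustrated concretely by simple cyclic quotient examples — to cut these local excesses down to precisely the pullbacks of sections of $\Omega_Z^{[r]}$.
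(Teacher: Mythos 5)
Your proof plan has the same architecture as the paper's proof: reduce to a codimension-one statement on $Z$ via reflexivity, inject $\Omega_Z^{[r]}$ into the push-forward, identify both sheaves over the locus where the fibres are reduced, and then do a local analysis at the generic points of the remaining divisors. Your first step (reflexivity of $p_*\mathcal{G}$, using that equidimensionality makes $p^{-1}(Y)$ of codimension at least $2$) is correct and usefully justifies a point the paper leaves implicit, and the local frames you write down for $\mathcal{G}$ and for $p^*\Omega_Z^r$ near a general point of a component $E$ of $p^*D_i$ are the right ones.

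The gap is in the step you yourself single out as the main obstacle. The assertion that $p_*\mathcal{O}_X=\mathcal{O}_Z$ ``forces the coefficients of any global section of $\mathcal{G}$ over $p^{-1}$ of a neighborhood of $D_i$ to be pullbacks of functions on $Z$'' is not a valid deduction, and it is false as stated: the coefficients are taken with respect to a frame defined only near a point of $E$, so they are not global functions on $p^{-1}(U)$ and the hypothesis on $p_*\mathcal{O}_X$ says nothing about them; worse, even for a section that \emph{is} a pullback, say $p^*\beta$ with $\beta=\sum_J g_J\,db_J$, the coefficient of $da_1\wedge da_{J\setminus\{1\}}$ equals $k\,a_1^{k-1}\,p^*(g_J)$, which is not a pullback. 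Likewise, no ``Galois-type averaging over the components of multiple fibres'' occurs anywhere: the fibration hypothesis is used exactly once, namely in your own unramified-locus step, where the projection formula lets you write a section $s\in\Gamma(p^{-1}(U),\mathcal{G})$, restricted over $U\setminus D$, as $p^*\beta$ with $\beta$ regular on $U\setminus D$ and rational on $U$. What actually closes the argument --- and what the paper's proof does --- is a purely local pole-order computation at a single component $E$, with no global input: since $\mathrm{ord}_E(p^*g)=k\,\mathrm{ord}_{D_i}(g)$ for a rational function $g$ on $Z$, regularity of $s=p^*\beta$ as a section of $\mathcal{G}$ along $E$ reads $k\,\mathrm{ord}_{D_i}(g_J)\geqslant 0$ for $1\notin J$, and $(k-1)+k\,\mathrm{ord}_{D_i}(g_J)\geqslant 0$ for $1\in J$; the latter gives $\mathrm{ord}_{D_i}(g_J)\geqslant -(k-1)/k>-1$, hence $\mathrm{ord}_{D_i}(g_J)\geqslant 0$ by integrality. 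This valuation estimate is the missing idea; once it is in place, $\beta$ is regular in codimension one and reflexivity of both sides finishes the proof. (Two smaller points: the divisors requiring this treatment are all those with non-reduced preimage, not only those with $m(p,D_i)>1$, since a component of coefficient greater than $1$ can coexist with one of coefficient $1$; and the computation must be run at each component with its own coefficient, which causes no difficulty.)
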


\begin{proof}
Fix $r>0$. Let $\sM = p_*(((p^*\Omega_Z^1)^{sat})^{[\wedge r]})$. Let $X_0$ be the smooth locus of $X$ and let $i:X_0 \to X$ be the canonical injection. Then $\Omega_X^{[1]} = i_*\Omega_{X_0}^1$ by Proposition \ref{refle-cod-2}. Since codim$X\backslash X_0 \geqslant 2$ and $Z$ is smooth in codimension $1$, there is a smooth open subset $Z_0$ in $Z$ which is contained in $p(X_0)$ and codim$Z\backslash Z_0 \geqslant 2$. By Proposition \ref{refle-cod-2}, it is enough to prove that $\sM|_{Z_0} \cong \Omega_Z^{[r]}|_{Z_0}= \Omega_{Z_0}^r$. Hence, by replacing $Z$ by $Z_0$, we may assume that $Z$ is smooth and $p(X_0)=Z$. Let $D$ be the sum of the prime divisors in $Z$ whose pull-back by $p$ are not reduced divisors. There is an open subset $Z_1$ of $Z$ such that $D|_{Z_1}$ is snc and codim$Z\backslash Z_1 \geqslant 2$. From Proposition \ref{refle-cod-2}, by replacing $Z$ by $Z_2$, we may assume that $D$ is a snc divisor on $Z$.

First we will prove that there is a natural injection from $\Omega_Z^r$ to $\sM$. In fact, on $X_0$, we have an injection from $p^*\Omega_Z^1|_{X_0}$ to $(p^*\Omega_Z^1)^{sat}|_{X_0}$. Hence there is an injection from $(p^*\Omega_Z^1)^{\wedge r}|_{X_0}$ to $((p^*\Omega_Z^1)^{sat})^{[\wedge r]}|_{X_0}$ on $X_0$. Since  $(p^*\Omega_Z^1)^{\wedge r}$ is without torsion, we have an injection from $(p^*\Omega_Z^1)^{\wedge r}$ to $((p^*\Omega_Z^1)^{sat})^{[\wedge r]}$ on $X$. Since $p(X)=Z$, we obtain an injection from $p_*((p^*\Omega_Z^1)^{\wedge r})$ to $\sM$. By the projection formula, this implies that $\Omega_Z^r \subseteq \sM$.

Now we will prove that $\Omega_Z^r \cong \sM$. Let $W=p^{-1}(Z \backslash D)$. Then the morphism $p|_W: W \to Z\backslash D$ is smooth in codimension $1$. Thus, $((p^*\Omega_Z^1)^{sat})^{[\wedge r]}|_W \cong p^*\Omega_Z^r|_W$ (see the proof of Lemma \ref{calcul-multi-div}). Then we obtain $\sM|_{Z\backslash D} \cong \Omega_{Z\backslash D}^{r}$ by the projection formula. Let $U$ be any open set in $Z$ and let $\beta$ be any element of $\sM(U)$, \textit{i.e.} a section of $\sM|_U$. Then $\beta$ is a rational section of $\Omega_Z^r|_U$ which can only have pole along $D$ since $\sM|_{Z\backslash D} \cong \Omega_{Z\backslash D}^{r}$. However, by the definition of $\sM$, $\beta$ induces a regular section of $\Omega^{r}_{X_0}|_{p^{-1}(U)}$. This implies that $\beta$ does not have pole along $D$. Thus $\sM(U) = \Omega_Z^r(U)$. Hence  $\Omega_Z^r \cong \sM$.
\end{proof}

\begin{lemma}
\label{base-normalisation}
Let $p:X\to Z$ be an equidimensional fibration from a normal variety $X$ to a smooth variety $Z$. Let $D_1,...,D_r$ be all the prime divisors in $Z$ such that the multiplicity $m(p,D_i)$ is larger than $1$. If $c_X:X_1 \to X$ is a finite morphism which is \'etale in codimension $1$ and $X_1 \overset{p_1}{\longrightarrow} Z_1 \overset{c_Z}{\longrightarrow} Z $ is the Stein factorisation, then $K_{Z_1}\leqslant c_Z^*(K_Z+\sum_{i=1}^{r}\frac{m(p,D_i)-1}{m(p,D_i)}D_i)$. That is, there is an effective $\mathbb{Q}$-divisor $\D_1$ on $Z_1$ such that $K_{Z_1}+\D_1= c_Z^*(K_Z+\sum_{i=1}^{r}\frac{m(p,D_i)-1}{m(p,D_i)}D_i)$.
\end{lemma}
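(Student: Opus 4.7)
The plan is to prove that $\D_1 := c_Z^*(K_Z+\sum_i\frac{m_i-1}{m_i}D_i)-K_{Z_1}$ (writing $m_i := m(p,D_i)$) is an effective $\mathbb{Q}$-divisor on $Z_1$. By Proposition \ref{refle-cod-2} and the normality of $Z_1$, effectivity can be checked prime divisor by prime divisor. Riemann--Hurwitz for the finite morphism $c_Z$ gives $K_{Z_1}-c_Z^*K_Z = R_{c_Z} = \sum_{D'}(e_{D'}-1)D'$, with $e_{D'}$ the ramification index along the prime divisor $D' \subseteq Z_1$. Adopting the convention $m(p,D) := 1$ when $D$ is not among the $D_i$'s, the effectivity of $\D_1$ is equivalent to the bound
\[
e_{D'} \;\le\; m(p,c_Z(D')) \qquad \text{for every prime divisor } D' \text{ of } Z_1.
\]

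\textbf{A global divisorial identity.} Since $c_X$ is \'etale in codimension one, $c_X^*\Omega_X^{[1]} \cong \Omega_{X_1}^{[1]}$ on a big open subset of $X_1$. Both $c_X^*(p^*\Omega_Z^1)^{sat}$ and $(p_1^*\Omega_{Z_1}^1)^{sat}$ are saturated subsheaves of $\Omega_{X_1}^{[1]}$ of the same rank $d = \dim Z$, with the former included in the latter via the natural map $c_Z^*\Omega_Z^1 \to \Omega_{Z_1}^{[1]}$; a saturated subsheaf of equal rank inside a torsion-free ambient sheaf must coincide with it, so the two sheaves agree. Passing to determinants and invoking Lemma \ref{calcul-multi-div} to identify these determinants as $\sO_X(p^*K_Z+R_p)$ and $\sO_{X_1}(p_1^*K_{Z_1}+R_{p_1})$ (with $R_p := \sum_E(k_E-1)E$ and $R_{p_1} := \sum_{E'}(k_{E'}-1)E'$ the ramification divisors of $p$ and $p_1$) yields the key identity
\[
p_1^*R_{c_Z} \;=\; c_X^*R_p - R_{p_1}.
\]
Reading off coefficients at the generic point of any component $E' \subseteq p_1^{-1}(D')$, and using that $c_X^*E = \sum_{E''\subseteq c_X^{-1}(E)}E''$ is valid since $c_X$ is \'etale in codimension one, gives the numerical relation $e_{D'}\,k_{E'} = k_E$ where $E := c_X(E')$. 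In particular $e_{D'}$ divides $k_E$, so $e_{D'} \le k_E$.

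\textbf{Main obstacle.} Combining with Lemma \ref{pullback-can-div}, which writes $R_p = p^*\sum_i\frac{m_i-1}{m_i}D_i + N$ for some effective $N$ on $X$, the sought effectivity of $\D_1$, pulled back along the equidimensional $p_1$, is equivalent to the pointwise inequality $R_{p_1} \ge c_X^*N$ on $X_1$. The hard part is to upgrade the general bound $e_{D'} \le k_E$ to the sharper $e_{D'} \le m(p,c_Z(D'))$: for each prime divisor $D' \subseteq Z_1$ I must exhibit a component $E' \subseteq p_1^{-1}(D')$ whose image $E = c_X(E')$ realises the minimum multiplicity $k_E = m(p,c_Z(D'))$. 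The plan for this step exploits the Stein factorisation property (connected fibres of $p_1$), the surjectivity of $c_X$, and a dimension count: for a general $z' \in D'$ with $z := c_Z(z')$, $c_X(p_1^{-1}(z'))$ is a connected full-dimensional closed subset of the connected scheme $p^{-1}(z)$, and as $z'$ varies over $c_Z^{-1}(z)$ these subsets cover $p^{-1}(z)$ by surjectivity of $c_X$; picking a general smooth point $x \in E_0$ in a minimum-multiplicity component of $p^{-1}(D)$ and tracing its preimages under $c_X$ (at which $c_X$ is \'etale, hence locally trivial) produces a component $E'$ over $D'$ with $c_X(E') = E_0$, yielding the required sharpening and completing the proof.
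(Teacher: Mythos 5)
Your reduction and your derivation of the key numerical relation are correct, and they amount to the paper's own computation in fancier clothing: the paper obtains it in one line from the identity $p_1^*c_Z^*D = c_X^*p^*D$ (valid since $p\circ c_X = c_Z\circ p_1$ and $c_X$ is \'etale in codimension one), rather than through determinants of saturated subsheaves of $\Omega_{X_1}^{[1]}$, but either route yields exactly the relation $e_{D'}\,k_{E'} = k_E$ for every prime divisor $E'$ of $X_1$ dominating $D'$, with $E = c_X(E')$, hence $e_{D'}\le k_E$ for every component $E$ of $p^*D$ that is hit from $D'$.

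The genuine gap is at the step you yourself call the main obstacle, and the plan you sketch for it is circular. Tracing the preimages of a general point $x$ of a minimal-multiplicity component $E_0$ produces, through each $\tilde x\in c_X^{-1}(x)$, a divisor of $X_1$ dominating $E_0$; but the $p_1$-image of that divisor is the component of $c_Z^{-1}(D)$ containing $p_1(\tilde x)$, and nothing guarantees this is the \emph{given} $D'$. Your covering statement ("as $z'$ varies over $c_Z^{-1}(z)$ the sets $c_X(p_1^{-1}(z'))$ cover $p^{-1}(z)$") has the quantifiers backwards: it shows that $E_0$ is reached from \emph{some} component of $c_Z^{-1}(D)$, i.e.\ that \emph{some} $D'$ satisfies $e_{D'}\le m(p,D)$, whereas the lemma needs \emph{every} $D'$ to reach a minimal component, i.e.\ that $c_X(p_1^{-1}(z'))$ meets $E_0$ for $z'$ general in the fixed $D'$ --- which is precisely the statement to be proved. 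Closing this would require showing that each Stein fiber $p_1^{-1}(z')$ maps \emph{onto} the whole connected fiber $p^{-1}(z)$, and that argument must be run at the points where distinct components of $p^{-1}(z)$ intersect; those points can lie on the non-\'etale locus of $c_X$ (it has codimension two in $X$ but may dominate $D$), so "$c_X$ is \'etale, hence locally trivial" is unavailable exactly where it is needed, and one has to use instead that the normal variety $X$ is topologically unibranch there. For what it is worth, the paper's own proof stops at the coefficient comparison and silently elides the same point; this is harmless in all of its applications, where $p$ is (the restriction of) a Mori fibration, so $p^*D$ is irreducible and its unique component is automatically the minimal one, but as a proof of the lemma in the stated generality your proposal is incomplete at this step.
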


\begin{proof}
If $D$ is a prime divisor in $Z$, then $p_1^*c_Z^*D = c_X^*p^*D$. Let $E$ be any irreducible component of $c_Z^*D$. Since $c_X$ is \'etale in codimension $1$, by comparing the coefficients of $E$ in $p_1^*c_Z^*D = c_X^*p^*D$, the coefficient $m_E$ of $E$ in $c_Z^*D$ is not larger than $m(p,D)$. This implies that $(m_E-1) \leqslant ({m(p,D)-1})$. Now from Lemma \ref{calcul-multi-div}, we obtain $K_{Z_1}=c_Z^*K_Z+\Sigma_{j=1}^s(m_j-1)E_j$, where the $E_j's$ are all prime divisors along which $c_Z^*$ is ramified and $m_j$ is the degree of ramification of $c_Z$ along $E_j$. By the discussion above, we have $(m_j-1)  \leqslant (m(p,p(E_j))-1) = m(p,p(E_j)) \cdot \frac{m(p,p(E_j))-1}{m(p,p(E_j))}$. Hence $K_{Z_1}\leqslant c_Z^*(K_Z+\sum_{i=1}^{r}\frac{m(p,D_i)-1}{m(p,D_i)}D_i)$.
\end{proof}

\begin{lemma}
\label{subsheave-Kor-dim}
Let $p:X\to Z$ be a Mori fibration from a normal threefold $X$ to a smooth surface $Z$. Let $D_1,...,D_r$ be all the prime divisors in $Z$ such that the multiplicity $m_i=m(p,D_i)$ is larger than $1$. Assume that there is a projective $\mathbb{Q}$-factorial variety $V$ such that we have an open embedding $j:Z\to V$ with $\mathrm{codim}V\backslash Z \geqslant 2$. Assume further that the pair $(V,\sum_{i=1}^{r}\frac{m_i-1}{m_i}\bar{D}_i)$ is klt, where $\bar{D}_i$ is the closure of $\bar{D}_i$ in $V$. If $\sF$ is a rank $1$ reflexive subsheaf of $\Omega_X^{[1]}$ such that $\sF^{[\otimes m_0]} \cong \sO_X(p^*D_0)$ for some positive integer $m_0$ and some divisor $D_0$ on $Z$, then $\bar{D}_0$, the closure of $D_0$ in $V$, is not ample.
\end{lemma}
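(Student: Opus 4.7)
Proof plan:

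The plan is to apply the Bogomolov--Sommese vanishing theorem for klt varieties (due to Graber--Kebekus--Kov\'acs--Peternell) to the rank one reflexive subsheaf $\sF \hookrightarrow \Omega_X^{[1]}$, after passing to a Kawamata cover that forces klt singularities on the ambient surface and on the total space. First I would construct a finite cover $c_V : V' \to V$ whose ramification along $\bar{D}_i$ has index exactly $m_i$ (for each $i$ with $m_i>1$) and whose ramification along the components of $\bar{D}_0$ is divisible by $m_0$, so that $c_V^*\bar{D}_0 = m_0 \bar{D}_0'$ for some integer Weil divisor $\bar{D}_0'$ on $V'$. By the calculation in Lemma \ref{calcul-multi-div} and the argument of Proposition \ref{ram-is-klt}, this ramification structure gives $K_{V'} = c_V^*(K_V + \bar{\D})$, which is $\mathbb{Q}$-Cartier, whence $V'$ is klt by \cite[Cor. 2.35 and Prop. 5.20]{KM98}. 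Pulling $p$ back along $Z' := c_V^{-1}(Z) \to Z$ and normalising yields a fibration $p':X' \to Z'$ together with a finite morphism $c_X : X' \to X$; by the Kawamata trick (the ramification of $c_Z$ along $D_i$ with index $m_i$ exactly cancels the multiplicity $m_i$ in $p^*D_i = m_i \tilde{D}_i$), the map $c_X$ is \'etale in codimension one, so $X'$ inherits klt singularities from $X$ by \cite[Prop. 5.20]{KM98}.

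Next, let $\sF' := (c_X^*\sF)^{**}$. Since $c_X$ is \'etale in codimension one, the natural map $c_X^*\Omega_X^{[1]} \to \Omega_{X'}^{[1]}$ is an isomorphism there, so $\sF'$ is a rank one reflexive subsheaf of $\Omega_{X'}^{[1]}$; moreover $(\sF')^{[m_0]} \cong \sO_{X'}(m_0 (p')^*\bar{D}_0')$. By the projection formula for the fibration $p'$ (which satisfies $p'_*\sO_{X'} = \sO_{Z'}$), combined with the fact that $V'\setminus Z'$ has codimension at least two in $V'$ so that $H^0(Z',\sO_{Z'}(\ell\bar{D}_0')) = H^0(V',\sO_{V'}(\ell\bar{D}_0'))$ for every $\ell$, one gets
\[
H^0\bigl(X', (\sF')^{[km_0]}\bigr) \cong H^0\bigl(V', \sO_{V'}(km_0 \bar{D}_0')\bigr) \quad \text{for all } k > 0,
\]
so $\kappa(\sF') = \kappa_{V'}(\bar{D}_0')$.

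Finally, suppose for contradiction that $\bar{D}_0$ is ample on $V$. Then $c_V^*\bar{D}_0 = m_0 \bar{D}_0'$ is ample on $V'$, so $\bar{D}_0'$ is ample and $\kappa_{V'}(\bar{D}_0') = \dim V' = 2$. The above identity gives $\kappa(\sF') = 2$, contradicting the Bogomolov--Sommese bound $\kappa(\sF') \leq 1$ that holds for any rank one reflexive subsheaf of $\Omega_{X'}^{[1]}$ on the klt variety $X'$. Hence $\bar{D}_0$ is not ample. The hard part will be the simultaneous arrangement of the Kawamata cover $V'\to V$ satisfying both divisibility conditions (one for $\bar{\D}$, one for $\bar{D}_0$) while keeping $V'$ projective and $V'\setminus Z'$ of codimension at least two; a secondary subtlety is the $m_0$-torsion discrepancy between $(\sF')^{[m_0]}$ and the honest pullback of $\sO_{V'}(m_0\bar{D}_0')$, which is immaterial for Kodaira dimensions but should be justified by a further \'etale-in-codimension-one cover trivialising the torsion line bundle if one wants the identification on the nose.
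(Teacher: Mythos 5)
Your overall strategy (pass to a ramified cover, then contradict Bogomolov--Sommese with an ample rank-one subsheaf of $\Omega^{[1]}$) is the right one, but two steps fail, and they are exactly the points the paper's own proof is built to avoid. First, your claim that $c_X : X' \to X$ is \'etale in codimension one is false. The Kawamata-trick cancellation works only along divisors where the ramification index of the base cover equals the fiber multiplicity; your cover $c_V$ is also branched along $\bar{D}_0$ with index divisible by $m_0$, while $p^*D_0$ is generically reduced (the components of $D_0$ are in general not among $D_1,\dots,D_r$, and even when one is, its multiplicity $m_i$ has nothing to do with $m_0$). So $X' \to X$ is branched in codimension one over $p^{-1}(D_0)$, and for the same reason your discrepancy formula $K_{V'}=c_V^*(K_V+\bar{\Delta})$ omits the ramification term $\frac{n_0-1}{n_0}\bar{D}_0$; fixing it requires enlarging the boundary by $D_0$ and a Bertini argument (take $m_0$ large and $\bar{D}_0$ a general very ample member), which is what the paper does at the outset. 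Second, and more fundamentally, even after that repair your contradiction is obtained by applying Bogomolov--Sommese on the \emph{threefold} $X'$, which requires $X'$ (hence $X$) to be klt --- but the lemma assumes only that $X$ is a \emph{normal} threefold. This hypothesis is deliberately absent: in the application the klt structure sits on the pair $(V,\bar{\Delta})$, not on $X$, and your argument cannot close without importing an assumption the statement does not give you. (A secondary issue: the existence of a global cover of the singular, $\mathbb{Q}$-factorial surface $V$ with prescribed ramification needs justification; Proposition \ref{ram-is-klt} constructs such covers only locally.)

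The paper's proof sidesteps both problems by working on a surface rather than the threefold. It takes the cyclic cover $c_X : X_1 \to X$ determined by the isomorphism $\sF^{[\otimes m_0]}\cong\sO_X(p^*D_0)$ itself (so no global covering lemma is needed), giving $\sO_{X_1}(E)\hookrightarrow\Omega_{X_1}^{[1]}$ with $E=(c_X^*p^*D_0)_{red}$. Stein factorisation $X_1 \to Z_1 \to Z$ and simple connectedness of the general fiber $\p^1$ show $c_Z$ has degree $m_0$ and $E=p_1^*H$ with $m_0H=c_Z^*D_0$. The key move is then to descend the subsheaf to the surface: since the fibers carry no pluri-forms, the injection factors through $(p_1^*\Omega_{Z_1}^1)^{sat}$ (Lemma \ref{fibration-section}), and $p_{1*}((p_1^*\Omega_{Z_1}^1)^{sat})\cong\Omega_{Z_1}^1$ (Lemma \ref{push-down-2}), producing $\sO_{V_1}(\bar{H})\hookrightarrow\Omega_{V_1}^{[1]}$ on the surface $V_1$ = normalisation of $V$ in the function field of $X_1$. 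The klt-ness needed for Bogomolov--Sommese is then that of $(V_1,\D_1)$, which follows from the \emph{hypothesis} that $(V,\sum\frac{m_i-1}{m_i}\bar{D}_i)$ is klt together with Lemma \ref{base-normalisation} --- no singularity assumption on $X$ enters anywhere. Since $\bar{H}$ is ample and $\mathbb{Q}$-Cartier, its Kodaira dimension is $2>1$, giving the contradiction. If you restructure your argument to push the subsheaf down to the surface cover in this way, your covering construction becomes unnecessary and the klt issue on the threefold disappears.
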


\begin{proof}
Assume the opposite. By replacing $m_0$ with a large multiple, we may assume that $\bar{D}_0$ is very ample. We may also assume that both $\bar{D}_0$ and $p^*D_0$ are prime and the pair $(V,\sum_{i=0}^{r}\frac{m_i-1}{m_i}\bar{D}_i)$ is klt. Let $c_X:X_1 \to X$ be the ramified cyclic cover with respect to $\sF$, $m_0$ and $p^*D_0$ (see \cite[Def. 2.52]{KM98}). Then $c_X$ is ramified over $p^*D_0$ with degree $m$ and $(c_X^*\sF)^{**} \cong \sO_{X_1}(E)$, where $E=(c_X^*p^*D_0)_{red}$. Moreover, over the smooth locus of $X$, there is an injection of sheaves from $c_X^*\Omega_X^1$ to $\Omega_{X_1}^1$. Hence by Proposition \ref{refle-cod-2}, we have an injection $\sO_{X_1}(E) \hookrightarrow \Omega_{X_1}^{[1]}$.

Let $X_1 \overset{p_1}{\longrightarrow} Z_1 \overset{c_Z}{\longrightarrow} Z $ be the Stein factorisation. Let $c_V:V_1 \to V$ be the normalisation of $V$ in the function field of $X_1$. Then we obtain an open embedding $j_1:Z_1 \to V_1$ such that $\mathrm{codim}V_1\backslash Z_1 \geqslant 2$. If $F_p$ is a general fiber of $p$ and if $F_{p_1}$ is a general fiber of $p_1$ which is mapped to $F_p$, then  $F_{p_1}\to F_p$ is \'etale. Since $p$ is a Mori fibration, $F_p$ is a smooth rational curve which is simply connected. Hence $F_{p_1}\to F_p$ is an isomorphism and $F_{p_1} \cong \p^1$. Hence $c_Z$ is of degree $m_0$. Let $H=(c_Z^*D_0)_{red}$. Since $c_Z\circ p_1:X_1 \to Z$ has connected fibers over $D_0$, we have $c_Z^*D_0=m_0H$. Hence $E_1=p_1^*H$. Note that $H$ is Cartier in codimension $1$. By shrinking $Z$, we may assume that $H$ is Cartier on $Z$.

Note that $c_X|_{X_1\backslash E}$ is \'etale in codimension $1$ and $c^*_ZD_0=m_0H$. From Lemma \ref{base-normalisation}, we conclude that there is an effective $\mathbb{Q}$-divisor $\D_1'$ in $Z_1$ such that $K_{Z_1}+\D_1' = p^*(K_Z+\sum_{i=0}^{r}\frac{m_i-1}{m_i}D_i)$. Hence if $\D_1$ is the closure of $\D_1'$ in $V_1$, then $K_{V_1}+\D_1 = p^*(K_V+\sum_{i=0}^{r}\frac{m_i-1}{m_i}\bar{D}_i)$. This implies that $(V_1,\D_1)$ is klt by \cite[Prop. 5.20 and Cor. 2.35]{KM98}.

Since general fiber of $p_1$ does not carry any non-zero pluri-forms,  by Lemma \ref{fibration-section}, the injection $\sO_{X_1}(E_1) \hookrightarrow \Omega_{X_1}^{[1]}$ factorises through $(p_1^*\Omega_{Z_1}^1)^{sat}$. Hence by the projection formula, we have an injection from $\sO_{Z_1}(H)$ to $p_{1*}((p_1^*\Omega_{Z_1}^1)^{sat})$. However, by Lemma \ref{push-down-2}, $p_{1*}((p_1^*\Omega_{Z_1}^1)^{sat})=\Omega_{Z_1}^{1}$. By Proposition \ref{refle-cod-2}, we get an injection from $\sO_{V_1}(\bar{H})$ to $\Omega_{V_1}^{[1]}$, where $\bar{H}$ is the closure of $H$ in $V_1$. Hence by Bogomolov-Sommes theorem (see \cite[Thm. 7.2]{GKKP11}), the Kodaira dimension of $\sO_{V_1}(\bar{H})$ is not larger than $1$. However, since $\bar{D}_0$ is ample, $\bar{H}$ is ample and has Kodaira dimension equal to dim$V_1=2$. This leads to a contradiction.
\end{proof}

\begin{lemma}
\label{picard-1-diff}
Let $p:X \to Z$ be an equidimensional fibration such that  general fibers of $p$ do not carry any non-zero pluri-form. Assume that $Z$ is smooth and there is an open embedding $j:Z \to V$ such that $\mathrm{codim}V\backslash Z \geqslant 2$.  
Assume that $H^0(V,\Omega_V^{[r]})=\{0\}$ for all $r>0$. Then $H^0(X,\Omega_X^{[r]}) = \{0\}$ for all $r>0$.
\end{lemma}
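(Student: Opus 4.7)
The plan is to reduce $H^0(X,\Omega_X^{[r]})$ first to a reflexive-pushforward computation on $Z$, then to $V$, and finish by the hypothesis. The main point is that the sheaf $\Omega_X^{[r]}$ behaves well with respect to $p$ because the fibers carry no pluri-forms, so the interesting part of $\Omega_X^{[r]}$ is captured by the horizontal forms $((p^*\Omega_Z^1)^{sat})^{[\wedge r]}$, which pushes down to $\Omega_Z^{[r]}$ by Lemma \ref{push-down-2}.

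First I would work on an open subset $W \subseteq X$ whose complement has codimension $\geqslant 2$ on which the natural sequence $0 \to \sA := (p^*\Omega_Z^1)^{sat}|_W \to \Omega_W^1 \to \sC \to 0$ is a short exact sequence of locally free sheaves. The wedge power $\bigwedge^r\Omega_W^1$ then carries a filtration whose graded pieces are direct summands of $\bigwedge^i\sA \otimes \bigwedge^{r-i}\sC$. Restricting to a general fiber $F$, $\sA|_F$ is trivial while $\bigwedge^{r-i}\sC|_F$ is a sum of copies of $\Omega_F^{r-i}$, which in characteristic $0$ is a direct summand of $(\Omega_F^1)^{\otimes (r-i)}$. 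The hypothesis that $F$ carries no non-zero pluri-form gives $H^0(F, \Omega_F^{t}) = 0$ for $t>0$, and an argument parallel to the proof of Lemma \ref{exact-section} (applied to the wedge filtration in place of the tensor filtration from Lemma \ref{long-filtration}) yields
\[
H^0(X,\Omega_X^{[r]}) \;\cong\; H^0\bigl(X,((p^*\Omega_Z^1)^{sat})^{[\wedge r]}\bigr).
\]

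Next I would apply Lemma \ref{push-down-2}, which gives $p_*\bigl(((p^*\Omega_Z^1)^{sat})^{[\wedge r]}\bigr) \cong \Omega_Z^{[r]}$, so
\[
H^0\bigl(X,((p^*\Omega_Z^1)^{sat})^{[\wedge r]}\bigr) \;\cong\; H^0(Z,\Omega_Z^{[r]}) \;=\; H^0(Z,\Omega_Z^r),
\]
the last equality since $Z$ is smooth. Finally, because $j:Z \hookrightarrow V$ is an open embedding with $\mathrm{codim}_V(V\setminus Z) \geqslant 2$ and $\Omega_V^{[r]}|_Z = \Omega_Z^r$ (as $Z$ is smooth and open in $V$), Proposition \ref{refle-cod-2} applied to the reflexive sheaf $\Omega_V^{[r]}$ gives $H^0(V,\Omega_V^{[r]}) \cong H^0(Z,\Omega_Z^r)$. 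Chaining the isomorphisms, $H^0(X,\Omega_X^{[r]}) \cong H^0(V,\Omega_V^{[r]}) = 0$ by hypothesis.

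The only subtlety I foresee is arranging the open subset $W$ so that the filtration argument simultaneously lives over an open subset of $Z$ whose complement is of codimension $\geqslant 2$, in order to push forward and invoke Lemma \ref{push-down-2} cleanly. This is handled by first shrinking $Z$ to a smooth big open subset contained in $p(X_{ns})$, as is done in the proof of Lemma \ref{push-down-2}, and then taking $W = p^{-1}(Z_0) \cap X_{ns}$ further intersected with the locus where $\sA$ and $\sC$ are locally free; equidimensionality of $p$ ensures the complement still has codimension $\geqslant 2$ in $X$, so Proposition \ref{refle-cod-2} extends all global-section computations from $W$ back to $X$.
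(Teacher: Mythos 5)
Your proof is correct and follows essentially the same route as the paper: reduce sections of $\Omega_X^{[r]}$ to sections of the horizontal sheaf $((p^*\Omega_Z^1)^{sat})^{[\wedge r]}$ using the no-pluri-forms hypothesis on general fibers, push down to $\Omega_Z^r$ by Lemma \ref{push-down-2}, and transfer to $V$ by Proposition \ref{refle-cod-2}. If anything you are more careful than the paper, which cites Lemma \ref{fibration-section} (stated only for tensor powers) to get the wedge-power isomorphism, whereas you spell out the needed wedge-filtration analogue of Lemma \ref{exact-section}.
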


\begin{proof}
Assume the opposite. Let $\sF = (p^*\Omega_Z^1)^{sat}$ be the saturation of the image of $p^*\Omega_Z^1$ in $\Omega_X^{[1]}$. Since general fibers of $p$ do not carry any non-zero pluri-form, we have $H^0(X,\Omega_X^{[r]}) \cong H^0(X,\sF^{[\wedge r]})$ by Lemma \ref{fibration-section}. Hence there is an injection from $\sO_X$ to $\sF^{[\wedge r]}$. By taking the direct image, we have an injection from $\sO_Z$ to $p_*(\sF^{[\wedge r]})$. By Lemma \ref{push-down-2}, $p_*(\sF^{[\wedge r]}) \cong \Omega_Z^r$. This implies that $H^0(V,\Omega_V^{[r]}) \neq \{0\}$ by Proposition \ref{refle-cod-2}, which is a contradiction.
\end{proof}

Now we are ready to prove Theorem \ref{dim2-picard-1}.

\begin{proof}[Proof of \ref{dim2-picard-1}]
We will argue by contradiction. Let $f_Z: Z \to Z'$ be the result of a $(K_Z+\D)$-MMP. Assume that $Z'$ has Picard number $1$. If $\D' = f_{Z*}\D$, then $K_{Z'}+\D'$ is not pseudo-effective neither. Thus $-(K_{Z'}+\D')$ is ample. We know that there is an open subset $Z'_0 \subseteq Z'$ with $\mathrm{codim}Z'\backslash Z'_0 \geqslant 2$ such that $f_Z^{-1}$ is an isomorphism from $Z'_0$ onto its image. Let $Z_0$ be $f_Z^{-1}(Z'_0)$, which is an open subset in $Z$. Since $Z'$ is normal, by shrinking $Z'_0$ if necessary, we may assume that $Z_0\cong Z'_0$ are  smooth varieties. Let $X_0=p^{-1}(Z_0)$.

Since  $\mathrm{codim}Z'\backslash Z'_0 = 2$, there is a projective curve $C'_0$ in $Z'_0$ such that it  is an ample divisor in $Z'$. Let $C_0$ be the image of $C_0'$ in $Z_0$. Let $\alpha$ be a class of movable curves in $X$ whose image in $Z$ is not zero and is proportional to the class of $C_0$.

Since general fibers of $p$ do not carry any non-zero pluri-forms, by Lemma \ref{fibration-section}, we have $H^0(X,(\Omega_X^1)^{[\otimes m]}) \cong H^0(X,((p^*\Omega_Z^{1})^{sat})^{[\otimes m]})$ for any $m>0$, where $(p^*\Omega_Z^{1})^{sat}$ is the saturation of the image of  $(p^*\Omega_Z^{1})$ in $\Omega_X^{[1]}$. Hence $\mu^{max}_{\alpha} ((p^*\Omega_Z^{1})^{sat}) \geqslant 0$ and there is a coherent sheaf $\mathscr{H}$ saturated in $(p^*\Omega_Z^{1})^{sat}$ such that $\mu_{\alpha}(\mathscr{H})  \geqslant 0$.

However, since $p|_{X_0}$ is a Mori fibration and $Z_0$ is smooth, the $\mathbb{Q}$-Cartier divisor associates to the determinant of $(p^*\Omega_Z^{1})^{sat}|_{X_0}$ is equal to $p^*(K_Z+\D)|_{X_0}$ by Remark \ref{rem-pb-can-div}. Hence $\mathrm{det}((p^*\Omega_Z^{1})^{sat}) \cdot {\alpha} =p^*(K_Z+\D) \cdot \alpha= (p\circ f_Z)^*(K_{Z'}+\D') \cdot \alpha<0$. This implies that $\mathscr{H} \neq (p^*\Omega_Z^{1})^{sat}$. If $\mathscr{J}$ is the quotient $(p^*\Omega_Z^{1})^{sat}/\mathscr{H}$, then rank$\mathscr{J}=$rank$\sH=1$ and $\sH [\otimes] \sJ =$ det$((p^*\Omega_Z^{1})^{sat})$. Thus $\mathscr{J} \cdot \alpha < 0$.

Since $H^0(X,((p^*\Omega_Z^{1})^{sat})^{[\otimes m]}) \neq \{0\}$ for some $m>0$, we have $H^0(X,\mathscr{H}^{\otimes s} [\otimes] \mathscr{J}^{\otimes t}) \neq \{0\}$ for some $s,t \geqslant 0$ by Lemma \ref{long-filtration}. In this case, we have $s>t$ for $(\sH [\otimes] \sJ) \cdot \alpha<0$ and  $\mathscr{J} \cdot \alpha < 0$.

Let $F_{p}$ be a general fiber of $p$. Then the class of  $F_{p}$ is movable and $\mu_{F_{p}}(\mathscr{H}^{[\otimes s]} [\otimes] \mathscr{J}^{[\otimes t]}) \geqslant 0$. Moreover, we have $(\mathscr{H} [\otimes]\mathscr{J}) \cdot F_p = \mathrm{det}((p^*\Omega_Z^{1})^{sat}) \cdot F_p=0$. This implies that ${\mathscr{H}} \cdot F_{p} \geqslant 0$ since $s>t$. However, since the restriction of $(p^*\Omega_Z^{1})^{sat}$ on $F_{p}$ is isomorphic to $\mathscr{O}_{F_{p}} \oplus \mathscr{O}_{F_{p}}$, we have ${\mathscr{H}} \cdot F_{p}=0$. 

Let $k$ be the smallest positive integer such that $\sH^{[\otimes k]}$ is invertible. Then there is a Cartier divisor $L$ in $Z$ such that $\sH^{[\otimes k]} \cong \sO_{X}(p^*L)$. Let $L'=f_{Z*}$L. Then $L'\cdot C'_0 \geqslant 0$ since $\mu_{\alpha}(\sH) \geqslant 0$. Notice that if $L'\cdot C'_0 > 0$, then $L'$ is ample on $Z'$. Hence by Lemma \ref{subsheave-Kor-dim}, we can only have $L' \cdot C_0'=0$ and $L'$ is numerically equal to the zero divisor since $Z'$ has Picard number $1$. By \cite[Lem. 2.6]{AD12}, there is a positive integer $k'$ such that $k'L'$ is linearly equivalent to the zero divisor. Hence $\sO_Z(k'L)|_{Z_0} \cong \sO_{Z_0}$ and $\sH^{[\otimes kk']}|_{X_0} \cong \sO_{X_0}$. Let $l$ be the smallest positive integer such that $\sH^{[\otimes l]}|_{X_0} \cong \sO_{X_0}$. Let $c:W_0 \to X_0$ be the cyclic cover with respect to $\sH|_{X_0}$ and $l$ (see \cite[Def. 2.52]{KM98}). Then $c$ is \'etale in codimension $1$. We have $\sO_{W_0} \cong (c^*\sH)^{**}$ and there is an injection $(c^*\sH)^{**} \hookrightarrow (c^*\Omega_{X_0}^1)^{**}\cong \Omega_{W_0}^{[1]}$. Hence $H^0(W_0,\Omega_{W_0}^{[1]})\neq \{0\}$.

On the other hand, let $W_0\to V_0 \to Z_0'$ be  the Stein factorisation. Let $h:V \to Z'$ be the normalisation of $Z'$ in the function field of $W_0$. Then there is an open embedding from $V_0$ to $V$ such that codim$V\backslash V_0 \geqslant 2$. By Lemma \ref{base-normalisation}, we have $K_{V_0} \leqslant (h^*(K_{Z'}+\D'))|_{V_0}$. Since codim$V\backslash V_0 \geqslant 2$, there is an effective $\mathbb{Q}$-divisor $\D_V$ in $V$ such that $K_{V}+\D_V= h^*(K_{Z'}+\D')$. Thus the pair $(V,\D_V)$ is klt by \cite[Prop. 5.20]{KM98}. Moreover, $-(K_{V}+\D_V)$ is ample since $-(K_{Z'}+\D')$ is ample, hence $V$ is rationally connected by \cite[Cor. 1.13]{HM07}. By Theorem \ref{klt-thmb}, we have $H^0(V,\Omega_V^{[1]})=0$. However, since general fibers of $W_0 \to V_0$ are isomorphic to $\p^1$ which does not carry any non-zero pluri-form, we conclude that $H^0(W_0,\Omega_{W_0}^{[1]})=\{0\}$ from Lemma \ref{picard-1-diff}. This is a contradiction.
\end{proof}

\subsection{The case of threefolds with canonical singularities}

In this subsection, we will study Mori fibrations $X \to Z$ such that $X$ is a projective threefold with $\mathbb{Q}$-factorial canonical singularities and $Z$ is a projective normal surface. Assume that $X$ carries non-zero pluri-forms. Then by Theorem \ref{dim2-picard-1}, either  the result $Z'$ of any $(K_Z+\D)$-MMP has Picard number $2$ or $(K_Z+\D)$ is pseudo-effective, where $\D$ the $\mathbb{Q}$-divisor defined in  Theorem \ref{dim2-picard-1}. We will study the first case in Proposition \ref{non-pf-pic-2} and the second case in Proposition \ref{base-ps-eff}.

\begin{prop}
\label{non-pf-pic-2}
Let $p:X \to Z$ be a Mori fibration from a projective threefold to a projective surface such that $X$ has at most $\mathbb{Q}$-factorial canonical singularities. Let $\D$ be the divisor in $Z$ defined in Theorem \ref{dim2-picard-1}. Assume that  $K_Z+\D$ is not pseudo-effective. Let $f_Z:Z\to Z'$ be the result of a $(K_Z+\D)$-MMP and let $\D'$ be the strict transform of $\D$ in $Z'$. Then there is a $(K_{Z'}+\D')$-Mori fibration $\pi' : Z' \to \mathbb{P}^1$. Let $\pi = \pi' \circ f_Z$ and $q= \pi \circ p : X \to \mathbb{P}^1$. Then we have $H^0(X, (\Omega^1_{X})^{[\otimes m]}) \cong H^0(X, ((q^*\Omega^1_{\mathbb{P}^1})^{sat})^{[\otimes m]})$ for any $m>0$, where $(q^*\Omega^1_{\mathbb{P}^1})^{sat}$ is the saturation of the image of $q^*\Omega^1_{\mathbb{P}^1}$ in  $\Omega^{[1]}_{X}$.
\end{prop}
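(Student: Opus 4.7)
My plan is to split the argument into two parts: existence of the Mori fibration $\pi'$, and the cohomological isomorphism. For the first part, I would start by running a $(K_Z+\D)$-MMP, which is legitimate because the pair $(Z,\D)$ is klt by Proposition \ref{ram-is-klt} (using that $X$ is klt, which follows from the canonical assumption) and terminates in dimension $2$. Since $K_Z+\D$ is not pseudo-effective by hypothesis, the MMP must end with a Mori fibration $\pi':Z'\to W$ rather than a minimal model. Theorem \ref{dim2-picard-1} says that $Z'$ has Picard number $2$, so the relative Picard number $1$ of $\pi'$ forces $W$ to have Picard number $1$, hence $\dim W=1$ and $W$ is a smooth curve; since $X$ (and therefore $Z$, $Z'$, and $W$) is rationally connected, we obtain $W\cong\p^1$.

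For the cohomological statement, I would first apply Lemma \ref{fibration-section} to the equidimensional Mori fibration $p:X\to Z$: its general fibers are copies of $\p^1$ and carry no pluri-form, so $H^0(X,(\Omega^1_X)^{[\otimes m]})\cong H^0(X,((p^*\Omega^1_Z)^{sat})^{[\otimes m]})$ for every $m>0$. It is therefore enough to replace $(p^*\Omega^1_Z)^{sat}$ by $(q^*\Omega^1_{\p^1})^{sat}$. To this end, set $\sA=(q^*\Omega^1_{\p^1})^{sat}$, $\sB=(p^*\Omega^1_Z)^{sat}$, and let $\sC$ be the quotient $\sB/\sA$; there is an open $U\subseteq X$ with $\mathrm{codim}_X(X\setminus U)\geqslant 2$ on which $0\to\sA|_U\to\sB|_U\to\sC|_U\to 0$ is an exact sequence of locally free sheaves, and on the smooth locus of both $p$ and $\pi=\pi'\circ f_Z$ one identifies $\sC\cong p^*\Omega^1_{Z/\p^1}$ using Remark \ref{rem-pb-can-div} together with the relation $\det\sB=\sA\otimes\sC$.

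The goal is now to invoke Lemma \ref{exact-section}, which reduces the task to proving that $H^0(X,\sA^{\otimes r}[\otimes]\sC^{\otimes t})=\{0\}$ for every $t>0$ and $r\geqslant 0$. I would obtain this by restricting to a general fiber $F_q=q^{-1}(z)=p^{-1}(F_\pi)$, where $F_\pi=\pi^{-1}(z)\cong\p^1$ is a general fiber of $\pi$. Since $q$ is constant on $F_q$, the pullback $\sA$ is trivial there, while $\sC|_{F_q}\cong(p|_{F_q})^*\Omega^1_{F_\pi}\cong(p|_{F_q})^*\sO_{\p^1}(-2)$; the projection formula, combined with $(p|_{F_q})_*\sO_{F_q}\cong\sO_{F_\pi}$ from cohomology and base change, yields $H^0(F_q,(\sA^{\otimes r}\otimes\sC^{\otimes t})|_{F_q})=H^0(\p^1,\sO_{\p^1}(-2t))=\{0\}$ for $t>0$. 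As the family $\{F_q\}$ sweeps out a dense subset of $U$, any global section of $\sA^{\otimes r}\otimes\sC^{\otimes t}$ on $U$ vanishing on each $F_q\cap U$ must vanish on $U$, and then extends trivially to all of $X$ by Proposition \ref{refle-cod-2}. Lemma \ref{exact-section} now gives $H^0(X,((p^*\Omega^1_Z)^{sat})^{[\otimes m]})\cong H^0(X,((q^*\Omega^1_{\p^1})^{sat})^{[\otimes m]})$, and chaining with the first isomorphism completes the argument.

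The principal obstacle I anticipate is the careful handling of reflexive hulls and the identification of $\sC$ on the codimension-two open subset $U$: the exact sequence and the clean description $\sC\cong p^*\Omega^1_{Z/\p^1}$ are only valid over the smooth locus of the morphisms involved, and one must keep strict track of saturations and ramification contributions when extending these over the rest of $U$. Once that bookkeeping is settled, the argument becomes a direct application of Lemmas \ref{fibration-section} and \ref{exact-section}.
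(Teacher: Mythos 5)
Your proposal is correct, and its skeleton coincides with the paper's: both reduce via Lemma \ref{fibration-section} to $\sB=(p^*\Omega_Z^1)^{sat}$, form the exact sequence $0\to\sA\to\sB\to\sC\to 0$ of locally free sheaves on a big open subset, and conclude with Lemma \ref{exact-section}. Where you genuinely diverge is in verifying the vanishing hypothesis $H^0(X,\sA^{\otimes r}[\otimes]\sC^{\otimes t})=\{0\}$ for $t>0$. The paper does this numerically: it picks a class $\alpha$ of movable curves whose image in $Z$ is proportional to the fibers of $\pi$, notes $\sA\cdot\alpha=0$, and computes $(\sA[\otimes]\sC)\cdot\alpha=\det\sB\cdot\alpha=p^*(K_Z+\D)\cdot\alpha<0$ using Remark \ref{rem-pb-can-div} together with the inequality $(K_Z+\D)\cdot F_\pi=(K_{Z'}+\D')\cdot f_{Z*}F_\pi<0$ coming from the Mori fibration $\pi'$; negative slope against a movable class then kills sections. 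You instead restrict to general fibers $F_q$ of $q$, identify $\sC|_{F_q}\cong(p|_{F_q})^*\sO_{\p^1}(-2)$, and kill sections by $H^0(\p^1,\sO_{\p^1}(-2t))=\{0\}$ plus density of the fibers. Your route is more elementary --- it bypasses slopes, movable classes, and the divisor computation of Remark \ref{rem-pb-can-div}, using only that general fibers of $\pi$ are rational curves --- at the cost of the bookkeeping you yourself flag: for general $F_q$ the set $F_q\setminus U$ is finite, so a section over $F_q\cap U$ of the line bundle $(p|_{F_q})^*\sO_{\p^1}(-2t)$ must first be extended across these points (normality of the general fiber) before one can conclude it vanishes; this is routine but should be stated, since you compute $H^0(F_q,\cdot)$ while your sections a priori live only on $F_q\cap U$. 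The paper's slope argument avoids this and is uniform with the technique already used to prove Theorem \ref{dim2-picard-1}. Finally, you also argue the existence of $\pi':Z'\to\p^1$, which the paper's proof passes over in silence; note that this step forces you to invoke the standing assumptions of the section --- that $X$ carries non-zero pluri-forms (so that Theorem \ref{dim2-picard-1} rules out Picard number $1$ for $Z'$) and that $X$ is rationally connected (so that the base of $\pi'$ is $\p^1$ rather than a curve of positive genus) --- neither of which appears verbatim among the hypotheses of the proposition; since without them the conclusion can fail (consider $X=\p^1\times\p^1\times E$ over $Z=\p^1\times E$ with $E$ elliptic), your contextual reading is the right one, and is in fact more careful than the paper on this point.
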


\begin{proof}
Let $\mathscr{H} = (q^*\Omega^1_{\mathbb{P}^1})^{sat}$ and let $\mathscr{F} = (p^*\Omega^{[1]}_{Z})^{sat}$ be the saturation of $p^*\Omega^{[1]}_{Z}$ in $\Omega^{[1]}_{X}$. Then we have an exact sequence of coherent sheaves $0 \to \mathscr{H} \to \mathscr{F} \to \mathscr{J} \to {0}$, where $\sJ$ is a torsion free sheaf such that det$\sF =\sH [\otimes] \sJ$. Let $\alpha$ be a class movable curves in $X$ whose image in $Z$ is not zero and is proportional to the class of general fibers of $\pi : Z \to \mathbb{P}^1$. Then $ \mathscr{H} \cdot \alpha = 0$. Moreover, we have $(\sH [\otimes] \sJ) \cdot \alpha= \mathrm{det}\mathscr{F} \cdot \alpha=  p^*(K_Z+\Delta) \cdot \alpha$ by Remark \ref{rem-pb-can-div}. This intersection number is negative since for general fibers $F_{\pi}$ of $\pi: \Z \to \mathbb{P}^1$, we have $(K_Z+\Delta) \cdot F_{\pi}=  (K_{Z'}+\Delta') \cdot (f_{Z*}F_{\pi})<0$. Hence $\sJ \cdot \alpha<0$.

There is an open subset $U$ of $X$ with $\mathrm{codim}(X \backslash U) \geqslant 2$ such that we have an exact sequence of locally free sheaves over $U$, $0\to \mathscr{H}|_U \to \mathscr{F}|_U \to \mathscr{J}|_U \to {0}$. Since $\mu_{\alpha}(\sH^{\otimes s} \otimes \sJ^{\otimes t})  <0$ if $t>0$, we have $H^0(U, \sH|_U^{\otimes s} \otimes \sJ|_U^{\otimes t}) =\{0\}$ if $t>0$. Hence by Lemma \ref{fibration-section} and Lemma \ref{exact-section}, we obtain $H^0(X, (\Omega^1_{X})^{[\otimes m]}) \cong H^0(X, (\mathscr{F})^{[\otimes m]}) \cong H^0(X, (\mathscr{H})^{[\otimes m]})$ for any $m>0$.
\end{proof}

\begin{exmp}
\label{exmp-surf-pic-2}
We will give an example of this kind of threefolds. Let $Z=\p^1 \times \p^1$. Denote by $pr_1$, $pr_2$  the two natural projections from $Z$ to $\p^1$. Let $z_1$,...,$z_r$ be $r\geqslant 4$ different points in $\p^1$ and let $C_i=pr_1^*z_i$  for $i=1,...,r$.  Let $X_0 = \p^1 \times Z$. By the method of Construction \ref{const-non-reduced-fib}, we can construct a Mori fibration $\pi:X \to Z$ such that $m(\pi,C_i)=2$ for $i=1,...,r$. Notice that $K_Z+\frac{1}{2}(C_1+\cdots +C_r)$ is not pseudo-effective since it has negative intersection number with general fibers of $pr_1$. Moreover, we have $H^0(X,(\Omega_X^1)^{[\otimes 2]}) \cong H^0(\p^1, (\Omega_{\p^1}^1)^{\otimes 2} \otimes \sO_{\p^1}(C_1+\cdots +C_r)) \cong H^0(\p^1, \sO_{\p^1}(-4+r)) \neq \{0\}$.
\end{exmp}

Now we will treat the second case. Note that this is the case for Example \ref{KB-pf} and Example \ref{over-surf-pf}.

\begin{prop}
\label{base-ps-eff}
Let $p:X\to Z$ be a Mori fibration from a projective threefold to a projective surface. Assume that $X$ has at most $\mathbb{Q}$-factorial klt singularities. Assume that $(K_Z+\D)$ is pseudo-effective, where $\D$ is the $\mathbb{Q}$-divisor defined in Theorem \ref{dim2-picard-1}. Then $X$ carries non-zero pluri-forms.
\end{prop}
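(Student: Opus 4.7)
The strategy is to produce non-zero pluri-forms by exhibiting non-zero sections of $\det((p^{*}\Omega_{Z}^{1})^{sat})$ and viewing them inside the appropriate tensor power. By Remark~\ref{rem-pb-can-div}, applied to the Mori fibration $p$ (which is equidimensional since $X$ is $\mathbb{Q}$-factorial, $Z$ is a surface, and the relative Picard number is $1$), we have
\[
\det\bigl((p^{*}\Omega_{Z}^{1})^{sat}\bigr) \;\cong\; \sO_{X}\bigl(p^{*}(K_{Z}+\D)\bigr).
\]
So it is enough to find $m>0$ with $H^{0}(X,\sO_{X}(mp^{*}(K_{Z}+\D)))\neq 0$; then the top wedge power $((p^{*}\Omega_{Z}^{1})^{sat})^{[\wedge 2]})^{[\otimes m]}$, which is isomorphic to this line bundle, is a reflexive direct summand of $((p^{*}\Omega_{Z}^{1})^{sat})^{[\otimes 2m]}$ by antisymmetrization on the smooth locus (in characteristic zero). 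Since general fibers of $p$ are $\p^{1}$ and carry no non-zero pluri-forms, Lemma~\ref{fibration-section} gives $H^{0}(X,(\Omega_{X}^{1})^{[\otimes 2m]})\cong H^{0}(X,((p^{*}\Omega_{Z}^{1})^{sat})^{[\otimes 2m]})$, and the non-zero section produced above lies on the right-hand side.

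The main work is therefore to upgrade ``pseudo-effective'' to ``effective $\mathbb{Q}$-divisor'' for $K_{Z}+\D$. By Proposition~\ref{ram-is-klt}, the pair $(Z,\D)$ is klt. Run a $(K_{Z}+\D)$-MMP on the $\mathbb{Q}$-factorial klt surface pair $(Z,\D)$; this consists only of divisorial contractions and terminates, yielding $f_{Z}\colon Z\to Z^{min}$ with $\D^{min}=f_{Z*}\D$ and $(Z^{min},\D^{min})$ klt. Since $K_{Z}+\D$ is pseudo-effective, so is $K_{Z^{min}}+\D^{min}$, and being the output of the MMP it is nef. By log abundance for klt log surfaces, $K_{Z^{min}}+\D^{min}$ is semi-ample, so $H^{0}(Z^{min},\sO_{Z^{min}}(m(K_{Z^{min}}+\D^{min})))\neq 0$ for some $m>0$. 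For each divisorial step of the MMP, the discrepancy of the contracted divisor is positive, so $K_{Z}+\D-f_{Z}^{*}(K_{Z^{min}}+\D^{min})$ is an effective $f_{Z}$-exceptional $\mathbb{Q}$-divisor, and pulling back gives $H^{0}(Z,\sO_{Z}(m(K_{Z}+\D)))\neq 0$.

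Finally, since $p_{*}\sO_{X}=\sO_{Z}$, a non-zero section of $\sO_{Z}(m(K_{Z}+\D))$ pulls back to a non-zero section of $\sO_{X}(mp^{*}(K_{Z}+\D))$, completing the chain of implications outlined in the first paragraph. The main obstacle is the use of log abundance for klt surface pairs to promote pseudo-effectivity to effectivity; this is a classical result for surfaces, but it is the non-formal input of the proof. The other technical point is ensuring the isomorphism of Remark~\ref{rem-pb-can-div} extends to a globally defined reflexive isomorphism, which uses equidimensionality of $p$ and Proposition~\ref{refle-cod-2} to extend from the smooth locus of $Z$ (whose complement has codimension $\geq 2$ in $X$).
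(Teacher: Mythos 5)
Your proposal is correct and takes essentially the same approach as the paper: the paper likewise reduces the statement to effectivity of $K_Z+\D$ (via the identification of $\det((p^*\Omega_Z^1)^{sat})$ with $\sO_X(p^*(K_Z+\D))$ from Lemma \ref{pullback-can-div}/Remark \ref{rem-pb-can-div}, the wedge-into-tensor-power passage accounting for the exponent $2l$, and Lemma \ref{fibration-section}), and then cites the abundance theorem for log surfaces to pass from pseudo-effective to effective. Your explicit MMP-to-nef-to-semi-ample argument, with sections pulled back through the divisorial contractions, simply unpacks what that citation provides.
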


\begin{proof}
By the abundance theorem for log surface (see. \cite{AFKM92}),  $(K_Z+\D)$ is effective . Hence there is  a positive integer $l$ such that $l(K_Z+\D)$ is an effective Cartier divisor. This implies that $H^0(X,(\Omega_X^1)^{[\otimes 2l]}) \neq \{0\}$ by the Lemma \ref{pullback-can-div}.
\end{proof}

\section{Proof of Theorem 1.4}
\label{Proof of Theorem 1.4}

In this section, we will complete the proof of Theorem \ref{terminal-3-fold}. First we will show that if $X$ is a rationally connected projective threefold with \textit{terminal} singularities such that $H^0(X,(\Omega_X^1)^{[\otimes m]}) \neq \{0\}$ for some $m>0$, then there is a dominant rational map from $X$ to $\p^1$ (Lemma \ref{ter-fibraiton-curve}). To this end, we will need the following lemma.

\begin{lemma}
\label{fiber-can-fibration}
Let $p:X \to Z$ be a Mori fibration such that $X$ has $\mathbb{Q}$-factorial terminal singularities and $\mathrm{dim}Z = \mathrm{dim}X-1$. Then there exists an open subset $Z_0 \subseteq Z$ with $\mathrm{codim}(Z \backslash Z_0) \geqslant 2$ such that every scheme-theoretic fiber over $Z_0$ is reduced.
\end{lemma}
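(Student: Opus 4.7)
The plan is to shrink $Z$ in codimension $\geq 2$ until $p$ becomes a flat equidimensional morphism between smooth varieties, and then to derive a parity contradiction via adjunction from any would-be multiple fiber over a prime divisor of the base.

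\textbf{Step 1 (Reduction to a good open set).}
The singular locus of $Z$ has codimension $\geq 2$ since $Z$ is normal. Since $X$ is a terminal $3$-fold, $\mathrm{Sing}(X)$ has codimension $\geq 3$ by Remark \ref{rem-sing-codim-2}, hence is $0$-dimensional, and so $p(\mathrm{Sing}(X))$ is finite. The closed locus $\{z\in Z : \dim X_z \geq 2\}$ cannot contain a divisor $D\subseteq Z$, for otherwise $p^{-1}(D)$ would fill all of $X$, contradicting the surjectivity of $p$; so it has dimension $\leq 0$. Removing these three codimension-$\geq 2$ subsets we obtain an open $Z_0\subseteq Z$ with $\mathrm{codim}_Z(Z\setminus Z_0)\geq 2$ such that $X^{\circ}:=p^{-1}(Z_0)$ and $Z_0$ are smooth and $p|_{X^{\circ}}:X^{\circ}\to Z_0$ is equidimensional of relative dimension $1$, hence flat.

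\textbf{Step 2 (Ruling out multiple fibers over a divisor).}
Suppose, for contradiction, that some prime divisor $D\subseteq Z_0$ has non-reduced generic scheme-theoretic fiber, and write $p^*D=\sum_i m_iE_i$ with the $E_i$ the distinct prime components of $p^{-1}(D)$. For a general closed point $z\in D$ the $1$-cycle of the scheme-theoretic fiber satisfies $[X_z]=\sum_i m_i[E_{i,z}]$, where $E_{i,z}:=E_i\cap X_z$. Flatness yields
\[
2 \;=\; -K_{X^{\circ}}\cdot F \;=\; -K_{X^{\circ}}\cdot[X_z] \;=\; \sum_i m_i\,d_i,\qquad d_i\;:=\;-K_{X^{\circ}}\cdot E_{i,z},
\]
where $F$ is the class of a general fiber, and each $d_i$ is a positive integer (integer because $X^{\circ}$ is smooth, positive because $-K_{X^{\circ}}$ is $p$-ample). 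The only way some $m_{i_0}$ exceeds $1$ is $p^*D=2E$ with $E$ a single prime divisor and $-K_{X^{\circ}}\cdot E_z=1$.

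\textbf{Step 3 (Parity obstruction from adjunction).}
Let $H\subseteq X^{\circ}$ be the pullback of a smooth curve in $Z_0$ passing through $z$ transverse to $D$. Then $E$ and $H$ are Cartier in the smooth variety $X^{\circ}$ and meet transversely with $E_z=E\cap H$, so $E_z$ is a local complete intersection in $X^{\circ}$, in particular Gorenstein, and a double application of adjunction gives
\[
\omega_{E_z} \;\cong\; \omega_{X^{\circ}}(E+H)\big|_{E_z}.
\]
Both $E$ and $H$ are $p$-vertical, so $E\cdot F=H\cdot F=0$; by flatness $E\cdot[X_z]=H\cdot[X_z]=0$, and $[X_z]=2[E_z]$ then forces $E\cdot E_z=H\cdot E_z=0$. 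Hence
\[
\deg\omega_{E_z} \;=\; K_{X^{\circ}}\cdot E_z+E\cdot E_z+H\cdot E_z \;=\; -1+0+0 \;=\; -1.
\]
But $E_z$ is a Gorenstein proper $1$-dimensional $\mathbb{C}$-scheme, so $\deg\omega_{E_z}=2p_a(E_z)-2\in 2\mathbb{Z}$; the odd value $-1$ is the desired contradiction.

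We conclude that no prime divisor of $Z_0$ has a non-reduced generic fiber, so the non-reduced-fiber locus is a closed subset of $Z_0$ of codimension $\geq 2$; removing it yields the required open set. The main obstacle is the parity argument of Step 3, which is made possible by Step 1: terminal singularities of a $3$-fold are isolated, which allows us to arrange that $X^{\circ}$ is smooth so that $E_z$ is LCI and its dualizing sheaf has integer degree of fixed parity.
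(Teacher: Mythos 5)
Your proof is correct in substance, and it takes a genuinely different route from the paper's. After the same first reduction (terminal singularities are smooth in codimension $2$, so after removing a closed subset of codimension $\geqslant 2$ from $Z$ one gets an equidimensional, hence flat, morphism $X^{\circ}\to Z_0$ between smooth varieties), the paper argues fiber by fiber: for each $z\in Z_0$ it chooses a smooth curve $B$ through $z$ with $Y=p^{-1}(B)$ a smooth surface, observes that $p|_Y$ is a Fano fibration from a smooth surface to a smooth curve, and quotes \cite[Prop. 3.5]{Ou13} to conclude that the fiber over $z$ is reduced, using Cohen--Macaulayness of the fibers to pass from generically reduced to reduced. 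You instead rule out multiplicities along divisors by a self-contained conic-bundle parity argument: flatness gives $\sum_i m_id_i=2$, so a multiple fiber over a divisor forces $p^*D=2E$ with $-K_{X^{\circ}}\cdot E_z=1$, and adjunction for the complete-intersection curve $E\cap H$ yields the odd value $\deg\omega_{E_z}=-1$, contradicting $\deg\omega_{E_z}=2p_a(E_z)-2$. Your route buys independence from the external surface result and from the Bertini-type claim that $p^{-1}(B)$ can be made smooth through a \emph{prescribed} point $z$ (a delicate point in the paper's own proof); the price is that it only controls fibers over general points of each divisor, so you must then know that the non-reduced-fiber locus is closed before removing it.

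Three caveats, all fixable. (i) The closedness just mentioned is asserted, not proved: you should cite openness of the locus of (geometrically) reduced fibers for proper flat morphisms (EGA~IV, 12.2.4), or argue via upper semicontinuity of the fiber dimension of the non-smooth locus of $p|_{X^{\circ}}$; relatedly, you use implicitly that fibers over $Z_0$ are Cohen--Macaulay (flat morphism, smooth total space, regular base), which is what makes ``all multiplicities equal $1$'' equivalent to ``reduced'' --- the same fact the paper states explicitly. (ii) The cycle identity $[X_z]=\sum_i m_i[E_{i,z}]$ for general $z\in D$ deserves a line of proof: for general $z$ each component of $X_z$ lies in a unique $E_i$ and is a reduced component of $(E_i)_z$, and at its generic point a local equation of $D$ pulls back to a unit times $e_i^{m_i}$, giving length $m_i$. (iii) The statement allows $\dim X$ arbitrary with $\dim Z=\dim X-1$, whereas your Step 1 (``$\mathrm{Sing}(X)$ is $0$-dimensional'') and Step 3 ($H$ the pullback of a curve being a Cartier divisor) are written for threefolds. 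Both generalize immediately: the image of the codimension-$\geqslant 3$ singular locus under a morphism with $1$-dimensional fibers has codimension $\geqslant 2$, and $H$ should be replaced by the pullbacks of $\dim Z-1$ general very ample divisors through $z$, each vertical, so the same adjunction and parity computation applies.
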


\begin{proof}
Since $X$ has terminal singularities, it is smooth in codimension $2$ (see \cite[Cor. 5.18]{KM98}). There is a smooth open subset $Z_0 \subseteq Z$ with $\mathrm{codim}(Z \backslash Z_0) \geqslant 2$ such that $p|_{X_0}:X_0 \to Z_0$ is equidimensional and $X_0$ is smooth, where $X_0=q^{-1}(Z_0)$. Since $Z_0$ is smooth and $X$ is Cohen-Macaulay (see \cite[Thm. 5.10 and Thm. 5.22]{KM98}), fibers of $q$ over $Z_0$ are Cohen-Macaulay. Hence a fiber over $Z_0$ is reduced if and only if it is generically reduced. If $z$ is a point in $Z_0$, then there is a smooth curve $B$ in $Z_0$ passing through $z$ such that $Y=p^{-1}(B)$ is a smooth surface. Moreover, $p|_Y:Y \to B$ is a Fano fibration for $p$ is a Mori fibration. Thus $p|_Y$ has reduced fibers over $B$ by \cite[Prop. 3.5]{Ou13}. Hence the scheme-theoretic fiber $p^{-1}(\{z\})$ is reduced.
\end{proof}

\begin{lemma}
\label{ter-fibraiton-curve}
Let $X$ be a rationally connected projective threefold with \textit{terminal} singularities such that $H^0(X,(\Omega_X^1)^{[\otimes m]}) \neq \{0\}$ for some $m>0$. Let $f_X:X \dashrightarrow X'$ be the result of a MMP for $X$. Then there is a fibration $p':X' \to \p^1$.
\end{lemma}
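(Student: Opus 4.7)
The plan is to run a $K_X$-MMP. Since $X$ is rationally connected with $\mathbb{Q}$-factorial canonical singularities, $K_X$ is not pseudo-effective, so any MMP terminates in a Mori fibration $p' : X' \to Z$; the $\mathbb{Q}$-factorial terminal property is preserved at each step, so $X'$ is again terminal. By Proposition~\ref{iso-graph} one has $H^0(X,(\Omega_X^1)^{[\otimes m]}) \cong H^0(\Gamma,(\Omega_\Gamma^1)^{[\otimes m]})$, where $\Gamma$ is the normalized graph of $f_X$. Applying Lemma~\ref{inj-mmp} to the projection $\Gamma \to X'$ (whose inverse contracts no divisor, being a composition of inverse flips and divisorial extractions) produces an injection into $H^0(X',(\Omega_{X'}^1)^{[\otimes m]})$, so $X'$ also carries non-zero pluri-forms. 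By \cite[Thm.~2.1]{Ou13} this rules out $\dim Z = 0$.

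If $\dim Z = 1$, then $Z$ is a smooth curve dominated by the rationally connected $X'$, so $Z \cong \p^1$ and $p'$ itself is the required fibration. If $\dim Z = 2$, the terminal hypothesis combined with Lemma~\ref{fiber-can-fibration} shows that the scheme-theoretic fibers of $p'$ are reduced over an open subset $Z_0 \subseteq Z$ with $\codim(Z \setminus Z_0) \geqslant 2$. Since every prime divisor $D \subset Z$ meets $Z_0$, we get $m(p', D) = 1$ for each such $D$, and the divisor $\Delta = \sum \frac{m_i-1}{m_i} D_i$ of Theorem~\ref{dim2-picard-1} vanishes. Theorem~\ref{dim2-picard-1} then provides a dichotomy: either $K_Z = K_Z + \Delta$ is not pseudo-effective, or the result $Z'$ of a $K_Z$-MMP has Picard number $2$. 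In the latter case, $Z'$ admits a Mori fibration $\pi' : Z' \to \p^1$, and since any MMP for a surface is a composition of birational morphisms, the composition $X' \to Z \to Z' \to \p^1$ is a morphism with connected fibers and hence a fibration.

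The delicate branch is when $K_Z$ is pseudo-effective. The pair $(Z,0)$ is klt by Proposition~\ref{ram-is-klt}, so abundance for klt surfaces makes $K_Z$ semi-ample. Because $Z$ is rationally connected, the Iitaka dimension $\kappa(Z) \neq 2$; if $\kappa(Z) = 1$, the Iitaka fibration yields a morphism $Z \to B$ to a smooth curve $B$ which, being dominated by the rationally connected $Z$, satisfies $B \cong \p^1$, and composition produces $X' \to \p^1$ as desired. The main obstacle is to exclude the case $\kappa(Z) = 0$, i.e., $K_Z$ torsion: my expectation is that this must follow from the terminal hypothesis by pulling back $p'$ along the index-$1$ cover $\widetilde{Z} \to Z$ and exploiting the fact that a $K$-trivial klt rationally connected surface admits a quasi-\'etale cover by an abelian surface or $K3$, neither of which can support a rationally connected total space for a terminal Mori conic bundle; the terminal hypothesis on $X'$ should therefore be incompatible with the quotient-type structure that underlies the canonical (but non-terminal) examples of the previous section.
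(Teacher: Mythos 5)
Your overall skeleton matches the paper's proof: run the MMP to get a Mori fibration $q':X' \to Z$, transport the pluri-forms to $X'$ (Lemma~\ref{inj-mmp}), rule out $\dim Z = 0$ by \cite[Thm.~2.1]{Ou13}, observe that $\dim Z = 1$ is immediate, and for $\dim Z = 2$ use Lemma~\ref{fiber-can-fibration} to get $\D = 0$ and Theorem~\ref{dim2-picard-1} to get a Mori fibration $Z' \to \p^1$, then compose. (Your phrasing of Theorem~\ref{dim2-picard-1} as a ``dichotomy'' inverts its logical form --- the theorem is the implication ``$X$ carries pluri-forms and $K_Z+\D$ not pseudo-effective $\Rightarrow$ $\rho(Z')=2$'' --- but you use it correctly in substance.)

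The genuine gap is your ``delicate branch,'' i.e.\ the case where $K_Z$ is pseudo-effective, which the paper never has to face. The paper invokes \cite[Cor.~1.2.8]{MoP08}: the base of a Mori fibration from a threefold with \emph{terminal} singularities has \emph{canonical} singularities. Since $Z$ is moreover rationally connected, its minimal resolution $\mu:\widetilde{Z}\to Z$ is a smooth rational surface, and the canonical condition $K_{\widetilde{Z}} = \mu^*K_Z + E$ with $E \geqslant 0$ forces $K_Z$ to be non-pseudo-effective; hence the hypothesis of Theorem~\ref{dim2-picard-1} holds automatically and the pseudo-effective case is vacuous. Your ad hoc treatment of that case does not close. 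First, you assert $\kappa(Z) \neq 2$ ``because $Z$ is rationally connected,'' but Proposition~\ref{ram-is-klt} only gives you that $Z$ is klt, not canonical, and for klt varieties rational connectedness does not bound the Kodaira dimension --- klt rationally connected varieties can have effective, big, or even ample canonical class (cf.\ Example~\ref{klt-type-gal} and the examples of Totaro and Koll\'ar cited in the introduction). Second, your exclusion of $\kappa(Z)=0$ is explicitly conjectural, and the sketched argument yields no contradiction: the index-one cover of $Z$ being of K3/abelian type contradicts nothing, because quasi-\'etale covers of rationally connected varieties need not be rationally connected; indeed Example~\ref{KB-pf} realizes exactly the structure you are trying to forbid (a Mori conic bundle over a klt rationally connected surface with torsion canonical class), only with canonical rather than terminal total space. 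What is missing is precisely the Mori--Prokhorov theorem converting ``terminal upstairs'' into ``canonical downstairs''; without it, or an equivalent substantive argument, the pseudo-effective branch of your proof remains open.
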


\begin{proof}
Notice that $X'$ is a Mori fiber space since $K_X$ is not pseudo-effective by \cite[Cor. 1.11]{Kol96}. Then we have a Mori fibration $q':X' \to Z$, where $Z$ is a normal rationally connected variety. If dim$Z=1$, then we are done.

By Lemma \ref{inj-mmp}, we know that $X'$ carries non-zero pluri-forms. Hence dim$Z>0$ by \cite[Thm. 2.1]{Ou13}.  Assume that dim$Z=2$. Then $Z$ has at most canonical singularities by \cite[Cor. 1.2.8]{MoP08}. Hence $K_Z$ is not pseudo-effective by \cite[Cor. 1.11]{Kol96}. Moreover, by Lemma \ref{fiber-can-fibration}, $m(q',D)=1$ for any effective divisor $D$ on $Z$. Hence, by Theorem \ref{dim2-picard-1}, if $Z \to Z'$ is the result of a MMP for $Z$, then $Z'$ has Picard number $2$. Hence we have a Mori fibration $Z' \to \p^1$. Let $p'$ be the composition of $X' \to Z \to Z' \to \p^1$. Then $p'$ is a fibration from $X'$ to $\p^1$
\end{proof}

With the notation above, notice that any general fiber $F'$ of $p':X' \to \p^1$ is smooth rationally connected surface. Hence $F'$ does not carry any non-zero pluri-form. Let $U$ be the largest open subset in $X$ over which $f_X:X \dashrightarrow X'$ is regular. Then codim$_{X}X\backslash U\geqslant 2$, codim$_{X'}X'\backslash f_X(U)\geqslant 2$ and the rational map $p : X\dashrightarrow \p^1$ is regular over $U$. If $F$ is a general fiber of $U\to \p^1$, then $f_X(F) \subseteq F'$, where $F'$ is a general fiber of $p'$. Moreover codim$_{F'}F'\backslash f_X(F) \geqslant 2$.  Hence $f_X(F)$ does not carry any non-zero pluri-form and neither does $F$ by Lemma \ref{inj-mmp}. The following lemma shows that the rational map $X \dashrightarrow \p^1$ is regular. Moreover, general fibers of $X\to \p^1$ are birational to the ones of $X'\to p^1$ which are smooth Fano surfaces. Hence general fibers of $X\to \p^1$ are rationally connected.

\begin{lemma}
\label{ter-reg}
Let $X$ be a projective threefold with $\mathbb{Q}$-factorial terminal singularities. Assume that there is a non constant rational map $p:X \dashrightarrow \p^1$ which is regular over $U$ such that $\mathrm{codim}X \backslash U \geqslant 2$. Assume that general fibers of $U\to \p^1$ do not carry any pluri-form. If $H^0(X, (\Omega_{X}^1)^{[\otimes m]}) \neq \{0\}$  for some $m>0$, then $p$ is regular.
\end{lemma}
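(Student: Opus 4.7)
The proof proceeds by contradiction. Suppose $p$ is not regular. By Hironaka's theorem we may take a projective birational morphism $\sigma : W \to X$ with $W$ smooth, such that $q := p \circ \sigma$ is a morphism to $\p^1$. Since $X$ is $\mathbb{Q}$-factorial, the exceptional locus of $\sigma$ has pure codimension one and is a union of divisors $E = \bigcup_i E_i$. Because $p$ fails to extend at some $x_0 \in X$, the connected fiber $\sigma^{-1}(x_0)$ has $q$-image equal to all of $\p^1$, and (after performing further blow-ups in $W$ if necessary) one finds an irreducible $\sigma$-exceptional divisor $E_0 \subset W$ with $q(E_0) = \p^1$. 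Note that $\sigma(E_0)$ is a codimension-two subvariety of $X$ which lies entirely in the smooth locus, since the terminal hypothesis forces $X^{sing}$ to have codimension at least three.

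Since general fibers of $p|_U$ carry no non-zero pluri-forms by hypothesis and $\sigma$ is an isomorphism over $U$, the same holds for general fibers of $q$. Lemma \ref{fibration-section} and Lemma \ref{pushdown-form} applied to $q$ give
\[
H^0(W, (\Omega_W^1)^{[\otimes m]}) \cong H^0(W, \sH_W^{\otimes m}) \cong H^0\bigl(\p^1, \sO_{\p^1}(-2m + \textstyle\sum_z [\tfrac{(m(q,z)-1)m}{m(q,z)}])\bigr),
\]
where $\sH_W := (q^* \Omega_{\p^1}^1)^{sat}$ is a line bundle on the smooth variety $W$, and every global section of $\sH_W^{\otimes m}$ is (up to the ramification twist) pulled back via $q$ from $\p^1$. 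Since $q|_{E_0} : E_0 \to \p^1$ is surjective, the restriction to $E_0$ of a non-zero such section is the pull-back of a non-zero section on $\p^1$, hence non-zero; therefore no non-zero section of $\sH_W^{\otimes m}$ vanishes along the horizontal divisor $E_0$.

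Analogously, let $j : U \to X$ denote the inclusion and set $\sH_X := j_*(((p|_U)^* \Omega_{\p^1}^1)^{sat})$, a rank-one reflexive subsheaf of $\Omega_X^{[1]}$. Lemma \ref{fibration-section} applied to the equidimensional morphism $p|_U$ over its image in $\p^1$, together with Proposition \ref{refle-cod-2}, yields $H^0(X, (\Omega_X^1)^{[\otimes m]}) \cong H^0(X, \sH_X^{[\otimes m]})$. The key geometric step is to compare $\sigma^* \sH_X$ and $\sH_W$ as rank-one subsheaves of $\Omega_W^1$: they agree on $\sigma^{-1}(U)$, so $\sigma^* \sH_X \cong \sH_W(-\sum_i a_i E_i)$ for certain integers $a_i$. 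A local computation near the generic point of $\sigma(E_0)$, tractable because $X$ is smooth there by the terminal hypothesis, shows that $a_0 \geq 1$; concretely a local generator of $\sH_X$ is a $1$-form whose coefficients all vanish along $\sigma(E_0)$, and pulling it back under the blow-up producing $E_0$ yields an extra factor vanishing along $E_0$ (as in the model $X = \mathbb{A}^3$, $\sigma(E_0) = \{x = y = 0\}$, $p = (x:y)$, where one computes explicitly $a_0 = 2$).

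With $a_0 \geq 1$, any section of $\sH_X^{[\otimes m]}$ on $X$ pulls back to a section of $\sH_W^{\otimes m}$ vanishing to order at least $m a_0 \geq m$ along $E_0$. By the second paragraph such a section must be zero, whence $H^0(X, (\Omega_X^1)^{[\otimes m]}) = 0$, contradicting the hypothesis. The main obstacle in this scheme is establishing the positivity $a_0 \geq 1$ for the dominating exceptional component; this is exactly where the terminal hypothesis enters essentially, since without it the indeterminacy of $p$ could meet the singular locus of $X$, where the local sheaf-theoretic comparison above is no longer transparent.
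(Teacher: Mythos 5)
Your central mechanism --- every global section of $\sH_W^{\otimes m}=((q^*\Omega^1_{\p^1})^{sat})^{\otimes m}$ is the pullback of a rational pluri-form from $\p^1$, hence has vertical zero divisor, while indeterminacy forces a horizontal exceptional divisor $E_0$ along which pulled-back sections vanish --- is sound, and over an indeterminacy \emph{curve} whose generic point lies in the smooth locus your vanishing claim $a_0\geqslant 1$ can indeed be made rigorous (restrict everything to a curve in $E_0$ that $\sigma$ contracts to a smooth point but which dominates $\p^1$; this is essentially the paper's own argument on the graph). The first genuine gap is your assertion that a section of $\sH_X^{[\otimes m]}$ ``pulls back to a section of $\sH_W^{\otimes m}$''. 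You chose $W$ to be a \emph{resolution}, so whenever $X$ is actually singular there are $\sigma$-exceptional divisors lying over the isolated terminal singular points, and reflexive pluri-forms do \emph{not} in general extend regularly across such divisors: the pullback is a priori only a rational section of $\sH_W^{\otimes m}$, possibly with poles there. This failure is not hypothetical --- it is exactly the phenomenon of Remark \ref{non-iso-rem} and Example \ref{exmp-curve-1} of the paper, where a $\mathbb{Q}$-factorial terminal rationally connected threefold carries non-zero pluri-forms while its resolution, being smooth and rationally connected, carries none; so the pullbacks of those forms to the resolution must acquire poles along divisors over the singular points. Once poles are allowed, your final step (``such a section must be zero'') no longer follows. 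The paper avoids this entirely by working with the normalisation $\Gamma$ of the graph inside $X\times\p^1$ instead of a resolution: a $pr_1$-exceptional divisor of $\Gamma$ cannot be contracted to a point (it would have to sit inside $\{pt\}\times\p^1$), so it maps onto a curve, whose generic point is a smooth point of $X$ by terminality; hence no poles ever appear.

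The second gap is the sentence ``$\sigma(E_0)$ is a codimension-two subvariety of $X$ which lies entirely in the smooth locus, since the terminal hypothesis forces $X^{sing}$ to have codimension at least three.'' This is a non-sequitur: the indeterminacy locus of $p$ may consist of isolated points, so $\sigma(E_0)$ may be a single point, and nothing prevents that point from being one of the singular points of $X$ (terminality bounds $\mathrm{codim}\,X^{sing}$, it does not keep the indeterminacy set away from $X^{sing}$). In that case your local computation (``tractable because $X$ is smooth there'') cannot even be set up, and, combined with the pole problem above, the contradiction evaporates precisely in the hardest case. The paper deals with point-indeterminacy by a completely different, non-differential argument for which your proposal has no counterpart: after forms have excluded indeterminacy in codimension $2$, the closures of two distinct fibers of $p|_U$ are effective $\mathbb{Q}$-Cartier divisors which could only meet inside the remaining codimension-$\geqslant 3$ indeterminacy set; but on a threefold two $\mathbb{Q}$-Cartier divisors that meet at all must meet in codimension $\leqslant 2$, so the fiber closures are disjoint and $p$ is regular. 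Without a substitute both for the pole control and for this last step, your proof does not close.
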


\begin{proof}
Let $\Gamma$ be the normalisation of the graph of $p$. Let $pr_1:\Gamma\to X$, $pr_2:\Gamma\to \p^1$ be the natural projections. Then there is a natural injection from $H^0(\Gamma, (\Omega_{\Gamma}^1)^{[\otimes m]})$ to  $H^0(X, (\Omega_X^1)^{[\otimes m]})$ by Lemma \ref{inj-mmp}. Let $\sigma \in H^0(X, (\Omega_X^1)^{[\otimes m]})$ be a non-zero element. Then $\sigma$ induces a rational section $\sigma_{\Gamma}$ of $(\Omega_{\Gamma}^1)^{[\otimes m]}$ on $\Gamma$. Let $E$ be a $pr_1$-exceptional divisor. Then there is a curve in $E$ which is contracted by $pr_2$ since dim$E=2>1$. Hence this curve is not contracted by $pr_1$ since the graph of $p$ is included in $X\times \p^1$ and the normalisation map is finite. Thus $pr_1(E)$ is a curve in $X$ and $X$ is smooth around the generic point of $pr_1(E)$ since $X$ is smooth in codimension $2$ (see \cite[Cor. 5.18]{KM98}). Hence $\sigma_{\Gamma}$ does not have pole along $E$. This implies that we have an isomorphism from $H^0(\Gamma, (\Omega_{\Gamma}^1)^{[\otimes m]})$ to $H^0(X, (\Omega_X^1)^{[\otimes m]})$  induced by $pr_1$.

Note that $pr_1^{-1}|_U$ induces an isomorphism from $U$ onto its image. If $F_U$ is a general fiber of $p|_U$, then $pr_1^{-1}(F_U)$ is an open subset of $F_{\Gamma}$, where $F_{\Gamma}$ is a general fiber of $pr_2:\Gamma \to \p^1$. Since $F_U$ does not carry any non-zero pluri-form, neither does $F_{\Gamma}$. Hence by Lemma \ref{fibration-section}, we have $H^0(\Gamma, (\Omega_{\Gamma}^1)^{[\otimes m]}) \cong H^0(\Gamma, ((pr_2^*{\Omega_{\p^1}^1})^{sat})^{[\otimes m]})$.

We will first prove that $p$ is regular in codimension $2$. Assume the opposite. Then there is a divisor $D$ in $\Gamma$ which is exceptional for $pr_1:\Gamma\to X$ and codim$_Xpr_1(D)=2$. Since $X$ is smooth in codimension $2$ (see \cite[Cor. 5.18]{KM98}), $X$ is smooth around the generic point of $pr_1(D)$. Thus there is a smooth quasi-projective curve $C$ in $D$ such that $\Gamma$ is smooth along $C$ and $pr_1(C)$ is a smooth point in $X$. Notice that $C$ is horizontal over $\p^1$ under the projection $pr_2:\Gamma\to \p^1$ for the same reason as above. Let $\sigma$ be a non-zero element in $H^0(\Gamma, (\Omega_{\Gamma}^1)^{[\otimes m]})$. By the exact sequence of locally free sheaves $\Omega_{\Gamma}^1|_C \to \Omega_C^1 \to 0$, $\sigma$ induce an element $\sigma_C$ in $H^0(C,(\Omega_C^1)^{\otimes m})$. On one hand, $C$ is horizontal over $\p^1$ and $\sigma$ is non-zero in  $H^0(\Gamma, ((pr_2^*{\Omega_{\p^1}^1})^{sat})^{[\otimes m]})$, we have $\sigma_C \neq 0$. On the other hand, $pr_1(C)$ is a smooth point in $X$. Hence we obtain $\sigma_C=0$ since $\sigma$ is the pullback of  certain element in $H^0(X, (\Omega_{X}^1)^{[\otimes m]})$. This is a contradiction.

Now we will prove that $p$ is regular. Let $F_1$ and $F_2$ be two different fibers of $U \to \p^1$. Then their closures in $X$ are two Weil divisors and their intersection is included in a closed subset of codimension at most $3$. Hence their intersection is empty since $X$ is $\mathbb{Q}$-factorial. This implies that $p$ is regular.
\end{proof}

Together with Lemma \ref{fibration-section} and Lemma \ref{pushdown-form}, we can conclude Theorem \ref{terminal-3-fold}.

\bibliographystyle{amsalpha}
\bibliography{threefold}

\end{document}